\journal{Elsevier}
\newcommand{\domD}{\mathscr{D}}
\renewcommand{\pi}{\piup}
\renewcommand{\Im}{\operatorname{Im}}
\DeclareMathOperator{\Si}{Si}
\DeclareMathOperator{\OO}{O}
\DeclareMathOperator{\E}{e}
\DeclareMathOperator{\I}{i}
\DeclareMathOperator{\Beta}{B}
\newcommand{\D}{\,\mathrm{d}}
\newcommand{\mathbd}{\boldsymbol}
\newcommand{\LC}{\mathbf{L}}
\newcommand{\HC}{\mathbf{HC}}
\newcommand{\MC}{\mathbf{M}}
\newdefinition{definition}{Definition}[section]
\newtheorem{theorem}{Theorem}[section]
\newtheorem{lemma}[theorem]{Lemma}
\newtheorem{proposition}[theorem]{Proposition}
\newtheorem{corollary}[theorem]{Corollary}
\newtheorem{remark}{Remark}[section]
\newtheorem{example}{Example}[section]
\newproof{proof}{Proof}
\numberwithin{equation}{section}
\newcommand{\textSE}{\text{\tiny{\rm{SE}}}}
\newcommand{\textDE}{\text{\tiny{\rm{DE}}}}
\newcommand{\Vol}{\mathcal{V}}
\newcommand{\VolSEn}{\mathcal{V}_N^{\textSE}}
\newcommand{\VolDEn}{\mathcal{V}_N^{\textDE}}
\newcommand{\Fred}{\mathcal{K}}
\newcommand{\FredSEn}{\mathcal{K}_N^{\textSE}}
\newcommand{\FredDEn}{\mathcal{K}_N^{\textDE}}
\newcommand{\SEt}{\psi^{\textSE}}
\newcommand{\DEt}{\psi^{\textDE}}
\newcommand{\SEtInv}{\{\SEt\}^{-1}}
\newcommand{\DEtInv}{\{\DEt\}^{-1}}
\newcommand{\SEtDiv}{\{\SEt\}'}
\newcommand{\DEtDiv}{\{\DEt\}'}
\newcommand{\ProjSE}{\mathcal{P}_N^{\textSE}}
\newcommand{\ProjDE}{\mathcal{P}_N^{\textDE}}
\newcommand{\vSEn}{v_N^{\textSE}}
\newcommand{\vDEn}{v_N^{\textDE}}
\newcommand{\uSEn}{u_N^{\textSE}}
\newcommand{\uDEn}{u_N^{\textDE}}
\begin{document}

\begin{frontmatter}

\title{Improvement of Sinc-collocation methods for Volterra-Fredholm integral
 equations of the second kind and their theoretical analysis~\tnoteref{mytitlenote}}
\tnotetext[mytitlenote]{This study was partially supported by JSPS
Grant-in-Aid for Scientific Research (C) JP23K03218.}

\author{Tomoaki Okayama}
\address{Graduate School of Information Sciences, Hiroshima City University,
3-4-1, Ozuka-higashi, Asaminami-ku, Hiroshima 731-3194, Japan}
\ead{okayama@hiroshima-cu.ac.jp}




\begin{abstract}
Sinc-collocation methods for Volterra-Fredholm integral equations
of the second kind were proposed independently by multiple authors:
by Shamloo et al.\ in 2012 and by Mesgarani and Mollapourasl in 2013.
Their theoretical analyses and numerical experiments suggest that
the presented methods can attain root-exponential convergence.
However, their convergence has not been strictly proved.
This study improves these methods to facilitate implementation,
and provides a convergence theorem for the improved method.
For the same equations, another Sinc-collocation method was
proposed in 2016 by John and Ogbonna,
which is regarded as an improvement to
the variable transformation employed by Shamloo et al.
It may attain a higher rate than
the previous methods, but its convergence has not yet been proved.
Therefore,
this study improves it to facilitate implementation,
and provides a convergence theorem for the improved method.
\end{abstract}

\begin{keyword}
Volterra integral equations\sep
Fredholm integral equations\sep
collocation method\sep
Sinc approximation\sep SE transformation
\sep DE transformation
\MSC[2010] 65R20
\end{keyword}

\end{frontmatter}

\section{Introduction and summary}
\label{sec:intro}

This study focuses on Sinc-collocation methods
for Volterra-Fredholm integral equations of the second kind of the form
\begin{equation}
 u(t)
 - \int_a^t k_1(t, s)u(s)\mathrm{d}s
 - \int_a^b k_2(t, s)u(s)\mathrm{d}s
= g(t),\quad a\leq t\leq b,
\label{eq:VFIE}
\end{equation}
where $k_1$, $k_2$, and $g$ are given continuous functions,
and $u$ is the solution to be determined.
This type of equations arises in application of various
fields~\cite{mirzaee16:_numer_volter_fredh}, and for this reason,
several numerical methods for the equations have been
developed~\cite{calio10:_direc,fermo25:_volter_fredh,negarchi18:_numer_volter_fredh,nemati15:_numer_volter_fredh,wang13:_lagran_volter_fredh}.
Most numerical methods in the literature
assume that the given functions are continuously differentiable
on the whole interval $[a, b]$,
and consequently such methods exhibit reduced efficiency
when any one of given functions has derivative singularity at the endpoints.
To address this problem,
Shamloo et al.~\cite{Shamloo} proposed a Sinc-collocation method,
which does not require differentiability at the endpoints.
Their method was derived by a typical collocation procedure
based on the Sinc approximation
\begin{equation}
\label{eq:Sinc-approximation}
 F(x) \approx \sum_{j=-N}^N F(jh)S(j,h)(x),\quad x\in\mathbb{R},
\end{equation}
where $h$ is a mesh size selected appropriately depending on
a positive integer $N$, and
$S(j,h)$ is the so-called Sinc function defined by
\[
 S(j,h)(x)=
\begin{cases}
\dfrac{\sin[\pi(x - jh)/h]}{\pi(x - jh)/h} & (x\neq jh), \\
1 & (x = jh).
\end{cases}
\]
The Sinc approximation is known as a highly efficient
formula~\cite{stenger93:_numer},
but two issues need to be addressed.
The first issue is that the given interval $(a, b)$ in~\eqref{eq:VFIE}
is finite, whereas the variable $x$ in~\eqref{eq:Sinc-approximation}
should be able to move over the entire real axis $\mathbb{R}$.
A standard way to resolve this issue is
to employ the tanh transformation
\[
  t = \SEt(x) = \frac{b - a}{2}\tanh\left(\frac{x}{2}\right)+\frac{b+a}{2},
\]
which maps $\mathbb{R}$ onto $(a, b)$.
In this paper, this is referred to as
the single-exponential (SE) transformation.
Combining the Sinc approximation with the SE transformation,
we obtain the following formula
\begin{equation}
\label{eq:SE-Sinc-approximation}
 f(t) \approx \sum_{j=-N}^N f(\SEt(jh))S(j,h)(\SEtInv(t)),\quad
t\in (a,b).
\end{equation}
The second issue is that the right-hand side of~\eqref{eq:SE-Sinc-approximation}
tends to $0$ as $t\to a$ or $t\to b$.
This implies that the right-hand side cannot give an accurate approximation
near the endpoints if $f(a)\neq 0$ or $f(b)\neq 0$.
A standard way to resolve this issue is to introduce auxiliary basis functions
\[
  \omega_a(t) = \frac{b - t}{b - a},\quad \omega_b(t)=\frac{t - a}{b - a}.
\]
Based on the two standard ways,
Shamloo et al.~\cite{Shamloo} set their approximate solution $\uSEn$ as
\begin{equation}
 \uSEn(t) = c_{-N}\omega_a(t)
+\sum_{j=-N+1}^{N-1} c_j S(j,h)(\SEtInv(t)) + c_{N} \omega_b(t),
\label{eq:uSEn}
\end{equation}
and applied a usual collocation procedure, setting
collocation points at $t = \SEt(ih)$ ($i=-N,\,\ldots,\,N$)
to obtain the unknown coefficients $c_{-N},\,\ldots,\,c_{N}$.
They conducted an error analysis of the method and showed that
there exists a constant $C$ independent of $N$ such that
\begin{equation}
 \sup_{t\in(a,b)}|u(t) - \uSEn(t)|
\leq C \mu_N \sqrt{N} \E^{-\sqrt{\pi d \alpha N}},
\label{eq:Shamloo-error-analysis}
\end{equation}
where $d$ and $\alpha$ are positive constants,
and $\mu_N$ denotes the 2-norm of the inverse of the coefficient matrix of
the resulting system.
The error estimate and their numerical examples
suggest that their method may attain root-exponential convergence.

However, three points remain to be discussed concerning their results.
The first point is that $u(a)=u(b)=0$ is assumed to derive the
error estimate, which is not generally satisfied.
The second point is that even if $u(a)=u(b)=0$ is satisfied,
its convergence is not rigorously proved
because $\mu_N$ is not estimated with respect to $N$.
The third point is that the implementation of their method is rather
complicated. This is mainly because
$\uSEn(\SEt(ih)) =  c_i$ does not hold.
This defect was taken over in extension
to nonlinear equations~\cite{zarebnia13:_volter}.

Another Sinc-collocation method was proposed by
Mesgarani and Mollapourasl~\cite{MesMoll}. Their approximate
solution $\tilde{u}_N^{\textSE}$ is simply set
based on the formula~\eqref{eq:SE-Sinc-approximation} as
\[
\tilde{u}_N^{\textSE}(t)
 = \sum_{j=-N}^{N} c_j S(j,h)(\SEtInv(t)).
\]
Their resulting system of linear equations is simple because
$\tilde{u}_N^{\textSE}(\SEt(ih)) = c_i$ \emph{does} hold.
However, as pointed out for~\eqref{eq:SE-Sinc-approximation},
their solution generally produces poor results near the endpoints
except when $u(a)=u(b)=0$.
Therefore, although they conducted an error analysis on their method,
their result is difficult to justify.

To address the aforementioned challenges,
this study proposes an easy-to-implement Sinc-collocation method,
where the approximate solution $\vSEn$ is set as
\begin{equation}
 \vSEn(t)
= c_{-N}\omega_a(t)
 + \sum_{j=-N}^N\left\{c_j - c_{-N}\omega_a(\SEt(jh)) - c_N\omega_b(\SEt(jh))\right\}S(j,h)(\SEtInv(t))
+ c_N\omega_b(t),
\label{eq:vSEn}
\end{equation}
which satisfies $\vSEn(\SEt(ih)) = c_i$.
This concept has already been utilized for several
equations~\cite{okayama18:_theor_sinc,OkayamaFredholm,OkayamaVolterra,stenger93:_numer},
and may work well for the equation~\eqref{eq:VFIE}.
In fact, this study theoretically shows that
the error of $\vSEn$ is bounded as
\[
 \max_{t\in [a,b]}|u(t) - \vSEn(t)|
\leq C \sqrt{N} \E^{-\sqrt{\pi d \alpha N}},
\]
which rigorously proves root-exponential convergence of $\vSEn$.

Furthermore, this study extends this result to
the Sinc-collocation method proposed by
John and Ogbonna~\cite{JohnOgbo}.
Their method can be approximately derived
from the Sinc-collocation method of Shamloo et al.~\cite{Shamloo}
by replacing the tanh transformation with the
so-called double-exponential (DE) transformation
\[
 t = \DEt(x) = \frac{b - a}{2}\tanh\left(\frac{\pi}{2}\sinh x\right)
+\frac{b+a}{2}.
\]
During recent decades, researchers have indicated that
such replacement generally enhances the convergence rate
of the numerical methods based on the Sinc
approximation~\cite{Mori,SugiharaMatsuo},
which may have motivated the authors to employ the DE transformation
in the present case.
However, their variable transformation is not exactly
the original DE transformation,
and an inconsistency exists in the collocation points
(the details are discussed later).
By appropriately modifying their method,
this study derives a further improved Sinc-collocation method
attaining almost exponential convergence;
the error of the approximate solution $\vDEn$ is bounded as
\[
 \max_{t\in [a,b]}|u(t) - \vDEn(t)|
\leq C \E^{-\pi d N/\log(2 d N/\alpha)},
\]
which is proved theoretically.
The convergence rate is significantly higher than that of $\vSEn$.

The remainder of this paper is organized as follows.
As a preliminary, in Sect.~\ref{sec:preliminary},
we describe basic approximation formulas for
functions, definite integrals, and indefinite integrals,
along with their convergence theorems.
Existing Sinc-collocation methods by Shamloo et al.~\cite{Shamloo}
and John--Ogbonna~\cite{JohnOgbo} are
described in Sect.~\ref{sec:existing}.
New Sinc-collocation methods by this study and their
convergence theorems are described in Sect.~\ref{sec:new}.
Numerical examples are presented in Sect.~\ref{sec:numer}.
The proofs of the new theorems are provided in Sect.~\ref{sec:proofs}.


\section{Sinc approximation and Sinc indefinite integration}
\label{sec:preliminary}

In this section, approximation formulas
for functions, definite integrals, and indefinite integrals, based on
the Sinc approximation~\eqref{eq:Sinc-approximation} are described.
The associated convergence theorems are also stated.

\subsection{Generalized SE-Sinc and DE-Sinc approximations}

As explained in Sect.~\ref{sec:intro},
a naive combination of the Sinc approximation and
the SE transformation~\eqref{eq:SE-Sinc-approximation}
does not give an accurate approximation near the endpoints
if $f(a)\neq 0$ or $f(b)\neq 0$.
For a general boundary condition,
we introduce the approximation operator $\ProjSE: C([a, b])\to C([a, b])$
as
\begin{align}
 \ProjSE[f](t)
&= f(\SEt(-Nh))\omega_a(t) + f(\SEt(Nh))\omega_b(t) \nonumber\\
&\quad
 + \sum_{j=-N}^N \left\{
 f(\SEt(jh)) - f(\SEt(-Nh))\omega_a(\SEt(jh)) - f(\SEt(Nh))\omega_b(\SEt(jh))
\right\}S(j,h)(\SEtInv(t)).
\label{eq:ProjSE}
\end{align}
The approximation $f \approx \ProjSE f$ is referred to as
the generalized SE-Sinc approximation in this paper.
Similarly, replacing the SE transformation with the DE transformation,
we introduce the approximation operator $\ProjDE: C([a, b])\to C([a, b])$
as
\begin{align}
 \ProjDE[f](t)
&= f(\DEt(-Nh))\omega_a(t) + f(\DEt(Nh))\omega_b(t) \nonumber\\
&\quad
 + \sum_{j=-N}^N \left\{
 f(\DEt(jh)) - f(\DEt(-Nh))\omega_a(\DEt(jh)) - f(\DEt(Nh))\omega_b(\DEt(jh))
\right\}S(j,h)(\DEtInv(t)).
\label{eq:ProjDE}
\end{align}
The approximation $f \approx \ProjDE f$ is referred to as
the generalized DE-Sinc approximation in this paper.
The following function spaces are important in stating the
convergence theorems of the two approximations.

\begin{definition}
Let $\domD$ be a bounded and simply-connected complex domain
(or Riemann surface).
Then, $\HC(\domD)$ denotes the family of all functions $f$
that are analytic in $\domD$ and continuous on $\overline{\domD}$.
This function space is complete with the norm $\|\cdot\|_{\HC(\domD)}$
defined by $\|f\|_{\HC(\domD)} = \max_{z\in\overline{\domD}}|f(z)|$.
\end{definition}

\begin{definition}
Let $\alpha$ be a positive constant with $\alpha\leq 1$,
and let $\domD$ be a simply-connected complex domain
(or Riemann surface) that satisfies $(a, b)\subset \domD$.
Then, $\MC_{\alpha}(\domD)$ denotes the family of all functions
$f\in\HC(\domD)$ for which there exists a constant $C$ such that
for all $z\in\domD$,
\begin{align*}
|f(z) - f(a)|&\leq C \left|z - a\right|^{\alpha},\\
|f(b) - f(z)|&\leq C \left|b - z\right|^{\alpha}.
\end{align*}
\end{definition}

Here, $\domD$ is supposed to be a domain translated by $\SEt$ or $\DEt$
from the strip complex domain
\[
 \domD_d = \left\{\zeta\in\mathbb{C}: |\Im\zeta| < d\right\}
\]
for a positive constant $d$. To be more precise, we suppose either
\[
 \SEt(\domD_d) =
 \left\{z = \SEt(\zeta) : \zeta\in\domD_d\right\}
\]
or
\[
  \DEt(\domD_d) =
 \left\{z = \DEt(\zeta) : \zeta\in\domD_d\right\}.
\]
For functions belonging to $\MC_{\alpha}(\domD)$,
convergence theorems of the generalized SE/DE-Sinc approximations
were provided as follows.
Here, $\|\cdot\|_{C([a,b])}$ denotes the usual uniform norm over $[a, b]$.

\begin{theorem}[Okayama~{\cite[Theorem~3]{okayama13:_note}}]
\label{thm:SE-Sinc-general}
Assume that $f\in\MC_{\alpha}(\SEt(\domD_d))$ for $d$ with $0<d<\pi$.
Let $N$ be a positive integer,
and let $h$ be selected by the formula
\begin{equation}
h = \sqrt{\frac{\pi d}{\alpha N}}.
 \label{eq:h-SE}
\end{equation}
Then, there exists a constant $C$ independent of $N$ such that
\[
 \|f - \ProjSE f\|_{C([a,b])}
\leq C \sqrt{N}\E^{-\sqrt{\pi d \alpha N}}.
\]
\end{theorem}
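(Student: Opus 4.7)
The plan is to reduce the error of $\ProjSE$ to the classical SE-Sinc approximation error of an auxiliary function that vanishes at both endpoints. The key first step is the identity
\[
f(t) - \ProjSE[f](t) = g_N(t) - \sum_{j=-N}^{N} g_N(\SEt(jh))\, S(j,h)(\SEtInv(t)),
\]
with $g_N(t) := f(t) - f(\SEt(-Nh))\omega_a(t) - f(\SEt(Nh))\omega_b(t)$. In other words, the generalized SE-Sinc error of $f$ coincides with the basic SE-Sinc error of $g_N$, so the whole problem is moved into the realm of the unaugmented Sinc sum acting on an $N$-dependent target.

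I would then split $g_N = g_0 + \psi_N$, where $g_0 := f - f(a)\omega_a - f(b)\omega_b$ is $N$-independent and satisfies $g_0(a)=g_0(b)=0$, while $\psi_N := [f(a) - f(\SEt(-Nh))]\omega_a + [f(b) - f(\SEt(Nh))]\omega_b$ captures the endpoint sampling error. Writing $g_0(z) = (f(z)-f(a)) - (f(b)-f(a))(z-a)/(b-a)$ and using the $\MC_{\alpha}$ hypothesis on $f$ together with the boundedness of $|z-a|$ and $|b-z|$ on $\overline{\SEt(\domD_d)}$, I can verify that $g_0 \in \MC_{\alpha}(\SEt(\domD_d))$.

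For the $g_0$ part, I would invoke the classical convergence theorem for the basic SE-Sinc sum applied to $\MC_{\alpha}$ functions with zero endpoint values (the root-exponential estimate underlying the earlier Sinc literature), which delivers the $C\sqrt{N}\E^{-\sqrt{\pi d \alpha N}}$ bound under $h = \sqrt{\pi d/(\alpha N)}$. For the $\psi_N$ part, the $\MC_{\alpha}$ hypothesis gives $|f(a) - f(\SEt(-Nh))| \leq C |a-\SEt(-Nh)|^\alpha$ (and its mirror at $b$), and the explicit decay $|a-\SEt(-Nh)|, |b-\SEt(Nh)| = \OO(\E^{-Nh})$ then yields $\|\psi_N\|_{C([a,b])} \leq C \E^{-\alpha Nh} = C \E^{-\sqrt{\pi d\alpha N}}$. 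The basic SE-Sinc sum of $\psi_N$ is bounded by this uniform size times the Lebesgue-type constant $\sup_{x\in\mathbb{R}} \sum_{j=-N}^{N} |S(j,h)(x)|$, which grows at most like $\log N$; adding the two contributions yields the claim.

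The main obstacle I anticipate is the $\psi_N$ step: the correction is pointwise exponentially small, yet the Sinc operator could in principle amplify it by its Lebesgue constant, so the whole argument hinges on that constant growing no faster than $\sqrt{N}$. A secondary care point is to verify the $\MC_{\alpha}$ membership of $g_0$ on the entire transformed strip $\SEt(\domD_d)$, not merely on the real interval $[a,b]$, because the classical convergence theorem for the $g_0$ piece requires analyticity together with Hölder-type boundary decay throughout the complex domain.
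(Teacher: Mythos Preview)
The paper does not supply its own proof of this theorem; it is quoted as a preliminary result from the cited reference (Okayama, \emph{Theorem~3}), so there is no in-paper argument to compare against directly.

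That said, your plan is sound and is in fact the standard route used in the cited source. The identity $f-\ProjSE f = g_N - \sum_j g_N(\SEt(jh))S(j,h)(\SEtInv(\cdot))$ is immediate from the definition of $\ProjSE$, and the split $g_N = g_0 + \psi_N$ cleanly separates the $N$-independent target from the endpoint-sampling defect. Your verification that $g_0\in\MC_\alpha(\SEt(\domD_d))$ with $g_0(a)=g_0(b)=0$ is correct; the only point to make explicit is that these two one-sided H\"older bounds, together with analyticity and boundedness on $\overline{\SEt(\domD_d)}$, are enough to feed into the basic SE-Sinc error theorem (this is precisely how the $\MC_\alpha$-with-zero-endpoints case reduces to the classical $\LC_\alpha$-type estimate). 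The $\psi_N$ step is also fine: $|\SEt(\pm Nh)-$endpoint$|=\OO(\E^{-Nh})$ gives $\|\psi_N\|_{C([a,b])}=\OO(\E^{-\alpha Nh})=\OO(\E^{-\sqrt{\pi d\alpha N}})$, and the Sinc Lebesgue constant is indeed $\OO(\log N)$, so this contribution is dominated by the $\sqrt{N}\E^{-\sqrt{\pi d\alpha N}}$ target.
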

\begin{theorem}[Okayama~{\cite[Theorem~6]{okayama13:_note}}]
\label{thm:DE-Sinc-general}
Assume that $f\in\MC_{\alpha}(\DEt(\domD_d))$ for $d$ with $0<d<\pi/2$.
Let $N$ be a positive integer,
and let $h$ be selected by the formula
\begin{equation}
 h = \frac{\log(2 d N/\alpha)}{N}.
 \label{eq:h-DE}
\end{equation}
Then, there exists a constant $C$ independent of $N$ such that
\[
 \|f - \ProjDE f\|_{C([a,b])}
\leq C \E^{-\pi d N/\log(2 d N/\alpha)}.
\]
\end{theorem}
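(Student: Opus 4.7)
The plan is to reduce the claim to the classical DE-Sinc convergence theorem for functions vanishing at both endpoints. First I would observe that, setting $\tilde g(t) := f(t) - f(\DEt(-Nh))\,\omega_a(t) - f(\DEt(Nh))\,\omega_b(t)$ and substituting into the definition~\eqref{eq:ProjDE} of $\ProjDE[f]$, the formula collapses to the identity
\begin{equation*}
 f(t) - \ProjDE[f](t) = \tilde g(t) - \sum_{j=-N}^N \tilde g(\DEt(jh))\,S(j,h)(\DEtInv(t)).
\end{equation*}
Thus the error of the generalized operator is exactly the error of the ordinary (uncorrected) DE-Sinc sampling formula applied to $\tilde g$. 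This rewriting is a short algebraic verification that isolates the genuine approximation-theoretic content.

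Next I would split $\tilde g = g + r$, where $g(t) := f(t) - f(a)\,\omega_a(t) - f(b)\,\omega_b(t)$ satisfies $g(a) = g(b) = 0$, and
\[
 r(t) := [f(a) - f(\DEt(-Nh))]\,\omega_a(t) + [f(b) - f(\DEt(Nh))]\,\omega_b(t)
\]
is an affine function whose coefficients will turn out to be double-exponentially small in $N$. Because $\omega_a$ and $\omega_b$ are affine, $g$ inherits the analyticity of $f$ on $\DEt(\domD_d)$; by combining the two H\"older bounds that define $\MC_\alpha(\DEt(\domD_d))$ one verifies a product-type decay $|g(z)| \leq C\,|z-a|^{\alpha}|b-z|^{\alpha}/|b-a|^{\alpha}$ on $\DEt(\domD_d)$. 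This places $g$ in the exact function class for which the classical DE-Sinc convergence theorem yields
\[
 \Bigl\| g - \sum_{j=-N}^N g(\DEt(jh))\,S(j,h)(\DEtInv(\cdot)) \Bigr\|_{C([a,b])} \leq C\, \E^{-\pi d N/\log(2dN/\alpha)}
\]
with the prescribed mesh size~\eqref{eq:h-DE}.

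It then remains to control the contribution of $r$. From the $\MC_\alpha$ hypothesis one has $|f(a) - f(\DEt(-Nh))| \leq C\,|\DEt(-Nh) - a|^{\alpha}$, and a direct computation of the DE transformation gives $|\DEt(-Nh) - a| = \OO(\E^{-\pi \sinh(Nh)})$. Inserting $Nh = \log(2dN/\alpha)$ yields $\sinh(Nh) \sim dN/\alpha$, so both coefficients of $r$ are $\OO(\E^{-\pi d N})$, which is negligible compared with the main term. Since $\|\omega_a\|_{C([a,b])} = \|\omega_b\|_{C([a,b])} = 1$ and the uncorrected DE-Sinc sampling operator is uniformly bounded on affine functions, the contribution of $r$ to the total error is also $\OO(\E^{-\pi d N})$; adding this to the bound for $g$ gives the claim.

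The step I expect to require the most care is verifying that $g$ lies in the function class demanded by the classical (uncorrected) DE-Sinc theorem. The $\MC_\alpha$ assumption controls $f$'s behaviour near each endpoint only separately, so one must combine the two H\"older estimates with the analyticity of $f$ on $\DEt(\domD_d)$ to produce a product-type decay uniformly on the full strip image. Balancing the interior discretization error against the truncation error, which is what selects the mesh~\eqref{eq:h-DE} and thereby the exponent $-\pi d N/\log(2dN/\alpha)$, is the other technical ingredient buried inside the classical theorem I would invoke.
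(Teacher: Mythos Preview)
The paper does not prove this theorem; it is quoted verbatim from~\cite{okayama13:_note} as a preliminary result, so there is no in-paper proof to compare against.  Your outline is the standard route to this statement and is essentially correct: the algebraic identity reducing $f-\ProjDE f$ to the uncorrected DE-Sinc error of $\tilde g$, the splitting $\tilde g=g+r$ with $g(t)=f(t)-f(a)\omega_a(t)-f(b)\omega_b(t)$ vanishing at both endpoints, and the estimate of the affine remainder $r$ all go through as you describe.  The product-type bound $|g(z)|\le C|z-a|^{\alpha}|b-z|^{\alpha}$ follows from the two separate H\"older bounds together with the elementary observation that $|z-a|+|b-z|\ge|b-a|$ forces at least one of $|z-a|$, $|b-z|$ to be bounded below by $|b-a|/2$, so one of the two single-sided bounds can always be upgraded to the product form; this is exactly the verification you flag as the delicate point.

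One small correction: the uncorrected DE-Sinc sampling operator is not uniformly bounded on $C([a,b])$, even restricted to affine functions; its Lebesgue constant grows like $\log N$.  This does not affect your conclusion, because the coefficients of $r$ are $\OO(\E^{-\pi d N})$ and $\log N\cdot\E^{-\pi d N}$ is still dominated by $\E^{-\pi d N/\log(2dN/\alpha)}$, but the justification should be phrased through the $\OO(\log N)$ Lebesgue bound rather than a uniform bound.
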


\begin{remark}
In view of several studies~\cite{Shamloo,zarebnia13:_volter},
some researchers may labor under the false assumption that
$f$ is approximated as
\[
 f(t) \approx f(\SEt(-Nh))\omega_a(t)
+\sum_{j=-N+1}^{N-1} f(\SEt(jh)) S(j,h)(\SEtInv(t)) + f(\SEt(Nh))\omega_b(t).
\]
In reality, this is not true.
We should note that
``$f$ is approximated by a linear combination of
$\omega_a$, $S(-N+1,h),\,\ldots,\,S(N-1,h)$, $\omega_b$''
does not necessarily imply that
``the coefficients of the linear combination are
$f(\SEt(jh))$ ($j=-N,\,\ldots,\,N$).''
\end{remark}

\subsection{SE-Sinc quadrature and DE-Sinc quadrature}

The SE-Sinc and DE-Sinc quadratures are approximation formulas
for definite integrals,
and their convergence theorems can be stated as follows.
The dependency of the constants is explicitly expressed
following Okayama et al.~\cite{Okayama-et-al},
which is effectively used in a later analysis.

\begin{corollary}[Okayama et al.~{\cite[Corollary~2.1]{okayama11:_improv}}]
\label{cor:SE-Sinc-quadrature}
Assume that $f$ is analytic in $\SEt(\domD_d)$ for $d$ with $0<d<\pi$,
and there exists constants $L$ and $\alpha$ such that
\begin{equation}
 |f(z)|\leq L |z - a|^{\alpha - 1}|b - z|^{\alpha - 1}
\label{eq:LCminus1}
\end{equation}
holds for all $z\in\SEt(\domD_d)$.
Let $N$ be a positive integer,
and let $h$ be selected by the formula~\eqref{eq:h-SE}.
Then, there exists a constant $C_{\alpha,d}^{\textSE}$
depending only on $\alpha$ and $d$ such that
\[
\left|
 \int_a^b f(s)\D{s}
- h \sum_{j=-N}^N f(\SEt(jh))\SEtDiv(jh)
\right| \leq L (b - a)^{2\alpha-1} C_{\alpha,d}^{\textSE}
 \E^{-\sqrt{\pi d \alpha N}}.
\]
\end{corollary}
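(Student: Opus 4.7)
The plan is to reduce the claim to the well-known trapezoidal-rule error analysis on the real line via the SE transformation. Setting $s=\SEt(x)$ in the integral yields
\[
 \int_a^b f(s)\D{s}
 = \int_{-\infty}^{\infty} F(x)\D{x},\qquad
 F(x) := f(\SEt(x))\SEtDiv(x),
\]
while the quadrature sum $h\sum_{j=-N}^N f(\SEt(jh))\SEtDiv(jh)$ is exactly the truncated trapezoidal rule applied to $F$ on $\mathbb R$. So it suffices to bound
$\left|\int_{\mathbb R} F - h\sum_{|j|\le N} F(jh)\right|$
and then pick $h$ optimally.

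Next I would split this error in the standard way into the discretization error $\left|\int_{\mathbb R} F - h\sum_{j\in\mathbb Z} F(jh)\right|$ and the truncation error $h\sum_{|j|>N}|F(jh)|$. The discretization part is controlled by the classical strip-analytic estimate (essentially the Poisson summation / residue argument), which gives a bound of the form $C\,\E^{-\pi d/h}\,\|F\|_{\mathbf H^1(\domD_d)}$ provided $F$ is analytic in $\domD_d$ and its $L^1$ trace on the boundary of $\domD_d$ is finite. The truncation part is controlled by the exponential decay of $F$ on $\mathbb R$, namely $|F(x)|\lesssim \E^{-\alpha|x|}$, which yields a bound of the form $C\,\E^{-\alpha N h}$. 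Balancing $\pi d/h = \alpha N h$ forces the choice $h=\sqrt{\pi d/(\alpha N)}$ in~\eqref{eq:h-SE} and gives the asserted rate $\E^{-\sqrt{\pi d\alpha N}}$.

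The main technical step, which I would carry out carefully to expose the constant $L(b-a)^{2\alpha-1}$, is verifying the required bounds on $F$ in $\domD_d$. For the SE map one has $|\SEt(\zeta)-a|\asymp (b-a)|1+\E^{-\zeta}|^{-1}$ and $|b-\SEt(\zeta)|\asymp (b-a)|1+\E^{\zeta}|^{-1}$, while $|\SEtDiv(\zeta)|\asymp (b-a)|1+\E^{-\zeta}|^{-1}|1+\E^{\zeta}|^{-1}$. Combining these with the hypothesis $|f(z)|\le L|z-a|^{\alpha-1}|b-z|^{\alpha-1}$ on $\SEt(\domD_d)$ gives
\[
 |F(\zeta)|\le L(b-a)^{2\alpha-1}\,C_d\,|1+\E^{-\zeta}|^{-\alpha}|1+\E^{\zeta}|^{-\alpha}
 \quad(\zeta\in\domD_d),
\]
which both establishes analyticity (via analyticity of $f$ and $\SEt$) and furnishes the required $L^1$-boundary and real-axis decay estimates with clean dependence on $L$ and $b-a$.

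The principal obstacle is the discretization-error bound: proving the $\E^{-\pi d/h}$ estimate with constants that depend only on $\alpha$ and $d$ (and not on $N$, $L$, or $b-a$), so that all the $L$ and $(b-a)$ factors can be cleanly factored out into the stated prefactor $L(b-a)^{2\alpha-1}C_{\alpha,d}^{\textSE}$. Once that is isolated—essentially by applying the residue-theorem argument to $F/\sin(\pi\zeta/h)$ on the rectangle $\{|\Re\zeta|\le R,\,|\Im\zeta|<d\}$ and passing $R\to\infty$ using the decay derived above—the truncation estimate and the balancing of $h$ are routine, and combining the two gives the stated bound.
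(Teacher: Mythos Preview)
The paper does not give its own proof of this corollary; it is quoted verbatim as a known result from Okayama et al.~\cite{okayama11:_improv}, so there is nothing in the present paper to compare your argument against.

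That said, the outline you give is exactly the standard proof one finds in the Sinc literature (including the cited source): substitute $s=\SEt(x)$, split the trapezoidal-rule error into a discretization part controlled by the strip-analytic residue estimate $\OO(\E^{-\pi d/h})$ and a truncation part controlled by the real-axis decay $\OO(\E^{-\alpha Nh})$, and balance the two via~\eqref{eq:h-SE}. Your computation of $|F(\zeta)|$ is in fact sharper than you state: the identities $\SEt(\zeta)-a=(b-a)/(1+\E^{-\zeta})$, $b-\SEt(\zeta)=(b-a)/(1+\E^{\zeta})$, and $\SEtDiv(\zeta)=(b-a)/[(1+\E^{\zeta})(1+\E^{-\zeta})]$ are exact, not merely $\asymp$, so the factor $L(b-a)^{2\alpha-1}$ emerges with no auxiliary constant $C_d$ at that stage, and all remaining constants depend only on $\alpha$ and $d$ as claimed.
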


\begin{corollary}[Okayama et al.~{\cite[Corollary~2.3]{okayama11:_improv}}]
\label{cor:DE-Sinc-quadrature}
Assume that $f$ is analytic in $\DEt(\domD_d)$ for $d$ with $0<d<\pi/2$,
and there exist constants $L$ and $\alpha$ such that~\eqref{eq:LCminus1}
holds for all $z\in\DEt(\domD_d)$.
Let $N$ be a positive integer,
and let $h$ be selected by the formula~\eqref{eq:h-DE}.
Then, there exists a constant $C_{\alpha,d}^{\textDE}$
depending only on $\alpha$ and $d$ such that
\[
\left|
 \int_a^b f(s)\D{s}
- h \sum_{j=-N}^N f(\DEt(jh))\DEtDiv(jh)
\right| \leq L (b - a)^{2\alpha-1} C_{\alpha,d}^{\textDE}
 \E^{-2\pi d N/\log(2 d N/\alpha)}.
\]
\end{corollary}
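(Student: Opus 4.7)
The plan is to reduce the DE-Sinc quadrature error to the trapezoidal-rule error for the transformed integrand on the real line, and then to split that error into a discretization part and a truncation part in the standard manner. Setting $g(\zeta) = f(\DEt(\zeta))\DEtDiv(\zeta)$, the substitution $s=\DEt(\zeta)$ transforms $\int_a^b f(s)\D s$ into $\int_{-\infty}^{\infty} g(\zeta)\D\zeta$, and the quadrature sum becomes $h\sum_{j=-N}^{N} g(jh)$. Therefore the error decomposes as
\begin{equation*}
\left|\int_{-\infty}^{\infty} g(\zeta)\D\zeta - h\sum_{j=-N}^{N} g(jh)\right|
\leq \underbrace{\left|\int_{-\infty}^{\infty} g(\zeta)\D\zeta - h\sum_{j=-\infty}^{\infty} g(jh)\right|}_{E_{\mathrm{dis}}}
+ \underbrace{h\sum_{|j|>N}|g(jh)|}_{E_{\mathrm{tru}}}.
\end{equation*}

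First I would check that $g$ is analytic in $\domD_d$: analyticity of $\DEt$ on $\domD_d$ together with the assumption that $f$ is analytic on $\DEt(\domD_d)$ gives analyticity of $g$, while the bound~\eqref{eq:LCminus1} gives a pointwise estimate
\begin{equation*}
|g(\zeta)| \leq L\,|\DEt(\zeta) - a|^{\alpha-1}|b - \DEt(\zeta)|^{\alpha-1}|\DEtDiv(\zeta)|
\end{equation*}
for all $\zeta\in\domD_d$. The task then is to show that this right-hand side is integrable along the horizontal lines $\Im\zeta = \pm d'$ for every $d'<d$, and to obtain an explicit bound proportional to $L(b-a)^{2\alpha-1}$ after pulling out $(b-a)$ from $\DEt$ and its derivative. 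Once that is done, the classical contour-shift argument for the infinite trapezoidal rule on a strip (see for instance the standard proof in Stenger~\cite{stenger93:_numer}) yields $E_{\mathrm{dis}} \leq L(b-a)^{2\alpha-1} C_1\E^{-2\pi d/h}$ with $C_1$ depending only on $\alpha$ and $d$.

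Next I would estimate $E_{\mathrm{tru}}$ by bounding each tail term. Since $\DEt(jh)\to a$ as $j\to -\infty$ and $\DEt(jh)\to b$ as $j\to +\infty$, with $|\DEt(jh)-a|$ and $|b-\DEt(jh)|$ of order $\E^{-\pi\sinh(|j|h)}$ up to an $(b-a)$ factor, while $|\DEtDiv(jh)|$ grows only like $(b-a)\cosh(jh)/\cosh^2(\tfrac{\pi}{2}\sinh(jh))$, one obtains
\begin{equation*}
|g(jh)| \leq L(b-a)^{2\alpha-1} C_2\,\E^{-\alpha\pi\sinh(|j|h)}
\end{equation*}
for $|j|\geq 1$, and summing over $|j|>N$ using the monotonicity of $\sinh$ gives $E_{\mathrm{tru}} \leq L(b-a)^{2\alpha-1} C_3\,\E^{-\alpha\pi\sinh(Nh)/2}\cdot(\text{mild factor})$. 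With the choice~\eqref{eq:h-DE}, we have $\alpha\pi\sinh(Nh)\geq 2\pi d N/\log(2dN/\alpha)$ asymptotically, and $2\pi d/h = 2\pi d N/\log(2dN/\alpha)$, so both $E_{\mathrm{dis}}$ and $E_{\mathrm{tru}}$ are of the same order and can be combined into the stated bound.

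The main obstacle is the truncation-error step: unlike the SE case, the argument $\pi\sinh(Nh)/2$ inside the exponential appears through $\tanh$, and one must be careful to extract the correct exponent $2\pi d N/\log(2dN/\alpha)$ after the choice~\eqref{eq:h-DE}. In particular, one has to verify that the tail sum can indeed be controlled by its first term up to a factor that is absorbed into the $\alpha$- and $d$-dependent constant, which requires the monotonicity of $\sinh$ and the exponential decay of the summand. All other estimates are either direct consequences of~\eqref{eq:LCminus1} or standard facts about the DE transformation, so the crux is this balanced choice of $h$ that makes the discretization and truncation exponents coincide, and the care required to keep the constant depending only on $\alpha$ and $d$.
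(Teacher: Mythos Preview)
The paper does not prove this corollary at all: it is quoted verbatim from Okayama et al.~\cite{okayama11:_improv} as a known result, with the remark that the dependence of the constant on $L$, $(b-a)$, $\alpha$, $d$ is taken from that reference. So there is no ``paper's own proof'' to compare against.

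That said, your outline is the standard argument used in the cited source and in Stenger's book: substitute $s=\DEt(\zeta)$, split the trapezoidal error on $\mathbb{R}$ into a discretization part controlled by contour shifting in the strip $\domD_d$ and a truncation part controlled by the double-exponential decay of $g$ on $\mathbb{R}$, then choose $h$ by~\eqref{eq:h-DE}. One small correction: with this choice of $h$ the two errors are \emph{not} of the same order. Since $Nh=\log(2dN/\alpha)$ gives $\sinh(Nh)\sim dN/\alpha$, the truncation exponent $\alpha\pi\sinh(Nh)$ behaves like $\pi d N$, which is asymptotically much larger than the discretization exponent $2\pi d/h = 2\pi d N/\log(2dN/\alpha)$. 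The discretization error therefore dominates, and the final bound comes from $E_{\mathrm{dis}}$ alone; $E_{\mathrm{tru}}$ is simply absorbed. Your inequality $\alpha\pi\sinh(Nh)\geq 2\pi d N/\log(2dN/\alpha)$ already expresses this, so the slip is only in the phrase ``of the same order.'' Apart from that, the plan is correct and matches the proof in the cited reference.
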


We note that the selection formula of $h$ is not optimal here;
if $h$ is selected optimally, then the convergence rate becomes higher.
However, the optimal selection formula cannot be employed here because
the formula of $h$ should be unified in all approximations of
functions, definite integrals, and indefinite integrals.

\subsection{SE-Sinc indefinite integration and DE-Sinc indefinite integration}

The SE-Sinc and DE-Sinc indefinite integrations are approximation formulas
for indefinite integrals,
and their convergence theorems were provided as follows.
Here, $J(j,h)$ is defined by
\[
 J(j,h)(x) = h \left\{
\frac{1}{2} + \frac{1}{\pi}\Si\left[\frac{\pi(x - jh)}{h}\right]
\right\},
\]
where $\Si(x)$ is the sine integral defined by
$\Si(x)=\int_0^x\{(\sin t)/t\}\D{t}$.

\begin{theorem}[Okayama et al.~{\cite[Theorem~2.9]{Okayama-et-al}}]
\label{thm:SE-Sinc-indefinite}
Assume that $f$ is analytic in $\SEt(\domD_d)$ for $d$ with $0<d<\pi$,
and there exist constants $L$ and $\alpha$ such that~\eqref{eq:LCminus1}
holds for all $z\in\SEt(\domD_d)$.
Let $N$ be a positive integer,
and let $h$ be selected by the formula~\eqref{eq:h-SE}.
Then, there exists a constant $C$ independent of $N$ such that
\[
\max_{t\in[a, b]}\left|
 \int_a^t f(s)\D{s}
- \sum_{j=-N}^N f(\SEt(jh))\SEtDiv(jh)J(j,h)(\SEtInv(t))
\right| \leq C \E^{-\sqrt{\pi d \alpha N}}.
\]
\end{theorem}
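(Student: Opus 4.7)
The plan is to reduce the problem on $(a,b)$ to the standard Sinc indefinite integration on $\mathbb{R}$ via the SE change of variables, and then to invoke the corresponding convergence result on the real line. Substituting $s=\SEt(x)$ and writing $X=\SEtInv(t)$, set $F(x):=f(\SEt(x))\SEtDiv(x)$. The change of variables yields $\int_a^t f(s)\D s=\int_{-\infty}^X F(x)\D x$, while the approximating sum becomes $\sum_{j=-N}^N F(jh)J(j,h)(X)$ by the very definition of $J(j,h)$. Because $\SEtInv\colon(a,b)\to\mathbb{R}$ is a bijection, $\max_{t\in[a,b]}|\cdot|$ of the original error equals $\sup_{X\in\mathbb{R}}|\cdot|$ of this transformed error.

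Next I would verify that $F$ is analytic in $\domD_d$ and decays exponentially at rate $\alpha$ on every horizontal line. From the explicit identities
\begin{equation*}
\SEt(\zeta)-a=\frac{(b-a)\E^{\zeta/2}}{2\cosh(\zeta/2)},\quad b-\SEt(\zeta)=\frac{(b-a)\E^{-\zeta/2}}{2\cosh(\zeta/2)},\quad \SEtDiv(\zeta)=\frac{b-a}{4\cosh^2(\zeta/2)},
\end{equation*}
together with the hypothesis $|f(z)|\leq L|z-a|^{\alpha-1}|b-z|^{\alpha-1}$ on $\SEt(\domD_d)$, a direct computation gives
\begin{equation*}
|F(\zeta)|\leq \frac{L(b-a)^{2\alpha-1}}{|2\cosh(\zeta/2)|^{2\alpha}},\quad \zeta\in\domD_d.
\end{equation*}
Since the zeros of $\cosh(\zeta/2)$ lie at $\zeta=\pm\I\pi$, they lie outside $\domD_d$ (because $d<\pi$); hence $|\cosh(\zeta/2)|$ is bounded below on the strip and grows like $\E^{|\Re\zeta|/2}$ at infinity, providing uniform exponential decay of $F$ at rate $\alpha$.

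Finally I would invoke the classical Sinc indefinite integration theorem on $\mathbb{R}$~\cite{stenger93:_numer}, in the sharpened form of \cite{Okayama-et-al}, which, for $F$ analytic in $\domD_d$ with such decay and with $h=\sqrt{\pi d/(\alpha N)}$ chosen to balance the truncation error of the Sinc series against the discretization error of the infinite Sinc sum, yields
\begin{equation*}
\sup_{X\in\mathbb{R}}\left|\int_{-\infty}^X F(x)\D x-\sum_{j=-N}^N F(jh)J(j,h)(X)\right|\leq C\E^{-\sqrt{\pi d\alpha N}}.
\end{equation*}
The main obstacle is the second step: because the hypothesis on $f$ is formulated along the curved image $\SEt(\domD_d)$ rather than a strip, one must carefully track how the singular factors $|\SEt(\zeta)-a|^{\alpha-1}$ and $|b-\SEt(\zeta)|^{\alpha-1}$ combine with $|\SEtDiv(\zeta)|$ into a single clean $|2\cosh(\zeta/2)|^{-2\alpha}$ bound valid throughout $\domD_d$. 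Once this reduction is in hand, the remainder of the argument on $\mathbb{R}$ is routine.
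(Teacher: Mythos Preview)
The paper does not prove this theorem; it is quoted verbatim from the cited reference~\cite{Okayama-et-al} and used as a black box in later sections. There is therefore no in-paper proof to compare against. That said, your sketch is correct and is essentially the argument by which the result is established in the original reference: pull back to the strip $\domD_d$ via $s=\SEt(x)$, verify that $F(x)=f(\SEt(x))\SEtDiv(x)$ is analytic on $\domD_d$ and satisfies the decay bound $|F(\zeta)|\leq L(b-a)^{2\alpha-1}/|2\cosh(\zeta/2)|^{2\alpha}$ (your identities and cancellation of the $|\E^{\pm\zeta/2}|^{\alpha-1}$ factors are accurate), and then apply the Sinc indefinite integration estimate on $\mathbb{R}$ with $h=\sqrt{\pi d/(\alpha N)}$. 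The only point worth tightening is the final appeal: the classical version in~\cite{stenger93:_numer} carries an extra logarithmic factor in $N$, so to obtain the clean $C\E^{-\sqrt{\pi d\alpha N}}$ bound one must invoke the refined real-line estimate from~\cite{Okayama-et-al} rather than Stenger's original form.
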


\begin{theorem}[Okayama et al.~{\cite[Theorem~2.16]{Okayama-et-al}}]
\label{thm:DE-Sinc-indefinite}
Assume that $f$ is analytic in $\DEt(\domD_d)$ for $d$ with $0<d<\pi/2$,
and there exist constants $L$ and $\alpha$ such that~\eqref{eq:LCminus1}
holds for all $z\in\DEt(\domD_d)$.
Let $N$ be a positive integer,
and let $h$ be selected by the formula~\eqref{eq:h-DE}.
Then, there exists a constant $C$ independent of $N$ such that
\[
\max_{t\in[a,b]}
\left|
 \int_a^t f(s)\D{s}
- \sum_{j=-N}^N f(\DEt(jh))\DEtDiv(jh)J(j,h)(\DEtInv(t))
\right| \leq C \frac{\log(2 d N/\alpha)}{N} \E^{-\pi d N/\log(2 d N/\alpha)}.
\]
\end{theorem}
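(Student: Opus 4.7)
The plan is to pull the problem back to an approximation on the whole real line via the DE transformation and then apply the standard Sinc indefinite-integration machinery. First I would set $F(\sigma):=f(\DEt(\sigma))\DEtDiv(\sigma)$ and $x:=\DEtInv(t)$; the substitution $s=\DEt(\sigma)$ gives $\int_a^t f(s)\D{s} = \int_{-\infty}^x F(\sigma)\D{\sigma}$, while the approximant becomes $\sum_{j=-N}^N F(jh)J(j,h)(x)$. Since $\DEt\colon\mathbb{R}\to(a,b)$ is a bijection, it suffices to bound the resulting error uniformly in $x\in\mathbb{R}$.

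Next I would split this error into a discretization term and a truncation term,
\[
\int_{-\infty}^x F(\sigma)\D{\sigma} - \sum_{j=-N}^N F(jh)J(j,h)(x)
= \left\{\int_{-\infty}^x F(\sigma)\D{\sigma} - \sum_{j=-\infty}^{\infty} F(jh)J(j,h)(x)\right\}
+ \sum_{|j|>N} F(jh)J(j,h)(x).
\]
For the truncation tail, I would use the uniform bound $|J(j,h)(x)|\leq h\bigl\{1/2+(1/\pi)\sup_{y\in\mathbb{R}}|\Si(y)|\bigr\}$ (finite because $\Si$ is bounded) together with the pointwise estimate $|F(\sigma)|\leq C_1 \E^{-C_2 \E^{|\sigma|}}$ for $|\sigma|$ large, which follows by plugging $|f(\DEt(\sigma))|\leq L|\DEt(\sigma)-a|^{\alpha-1}|b-\DEt(\sigma)|^{\alpha-1}$ into the explicit forms of $\DEt$ and $\DEtDiv$. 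Summing over $|j|>N$ and inserting $h=\log(2dN/\alpha)/N$ produces a tail bound of the claimed order, with the outer factor $h$ accounting for the $\log(2dN/\alpha)/N$ prefactor in the final rate.

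For the discretization term, I would invoke the standard contour-integral representation of the Sinc indefinite-integration error underlying the proof of Theorem~\ref{thm:SE-Sinc-indefinite}: provided $F$ is analytic in $\domD_d$ and absolutely integrable along the horizontal lines $\Im\zeta=\pm d$, the error is bounded uniformly in $x\in\mathbb{R}$ by a constant multiple of $\E^{-\pi d/h}$ times a weighted $L^1$-norm of $F$ along $\partial\domD_d$. With $h=\log(2dN/\alpha)/N$ this factor is exactly $\E^{-\pi d N/\log(2dN/\alpha)}$, which combined with the truncation bound gives the stated result.

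The main obstacle is verifying finiteness of the relevant $L^1$-norm of $F$ along $\Im\zeta=\pm d$. Unlike in Corollary~\ref{cor:DE-Sinc-quadrature}, where uniform boundedness of $F$ sufficed, here one genuinely needs absolute integrability on the boundary of the strip. The singular factors $(z-a)^{\alpha-1}(b-z)^{\alpha-1}$ of $f$ become delicate near $\Im\zeta=\pm\pi/2$ under the DE map, so one must carefully exploit the doubly-exponential decay of $|\DEtDiv(\sigma\pm\I d)|$ along the horizontal lines to dominate them. Once this analytic estimate is in hand, the remaining steps are routine Sinc-function manipulations combined with the prescribed balance between $h$ and $N$.
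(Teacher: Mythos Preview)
This theorem is not proved in the paper; it is quoted from an external reference (Okayama et al., Theorem~2.16) as a preliminary tool in Section~\ref{sec:preliminary} and subsequently used as a black box in the proofs of Lemma~\ref{lem:operator-norm-converge-DE} and Theorem~\ref{thm:error-DE-Nystroem}. There is therefore no argument in the present paper to compare your proposal against.

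Your outline is nonetheless the standard route by which results of this type are established in the cited source: change variables to reduce to Sinc indefinite integration on $\mathbb{R}$, split into a discretization part (the bi-infinite Sinc sum) and a truncation tail over $|j|>N$, bound the tail via $|J(j,h)(x)|\leq 1.1h$ together with the doubly-exponential decay of $F(\sigma)=f(\DEt(\sigma))\DEtDiv(\sigma)$ on the real axis, and bound the discretization term via the contour-integral representation on $\partial\domD_d$. Your identification of the absolute integrability of $F$ along $\Im\zeta=\pm d$ as the genuinely delicate step is correct; this is exactly where the restriction $d<\pi/2$ is used. One bookkeeping point: with the DE map and the choice $Nh=\log(2dN/\alpha)$, the truncation tail is in fact of order $\E^{-cN}$ for some $c>0$, which is far below the target rate. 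The prefactor $\log(2dN/\alpha)/N=h$ in the final bound is therefore supplied by the discretization estimate (the sharp form of the indefinite-integration contour bound carries a factor $h/\sinh(\pi d/h)$), not by the truncation as you suggest.
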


\section{Existing Sinc-collocation methods}
\label{sec:existing}

In this section, we describe two Sinc-collocation methods
proposed by Shamloo et al.~\cite{Shamloo}
and John--Ogbonna~\cite{JohnOgbo}.
The first method employs the SE transformation,
whereas the second method employs the DE transformation.

\subsection{Sinc-collocation method by Shamloo et al.}

First, we describe the Sinc-collocation method by Shamloo et al.
As outlined in Sect.~\ref{sec:intro},
after setting their approximate solution $\uSEn$ as~\eqref{eq:uSEn},
they substitute $\uSEn$ into the given equation~\eqref{eq:VFIE},
where the definite and indefinite integrals
are approximated by the SE-Sinc quadrature and
SE-Sinc indefinite integration, respectively.
For the sake of convenience,
we introduce the approximated integral operator
$\VolSEn: C([a, b])\to C([a, b])$
and $\FredSEn: C([a, b])\to C([a, b])$ as
\begin{align*}
\VolSEn[f](t)
&=\sum_{j=-N}^N k_1(t,\SEt(jh))f(\SEt(jh))\SEtDiv(jh)J(j,h)(\SEtInv(t)),\\
\FredSEn[f](t)
&=h\sum_{j=-N}^N k_2(t,\SEt(jh))f(\SEt(jh))\SEtDiv(jh).
\end{align*}
Using these notations, we obtain an equation expressed as
\[
 \uSEn(t) - \VolSEn[\uSEn](t) - \FredSEn[\uSEn](t) = g(t).
\]
Discretizing this equation at $n=2N+1$ collocation points $t = \SEt(ih)$
($i=-N,\,\ldots,\,N$), we obtain the system of equations
with respect to $\mathbd{c}_n=[c_{-N},\,\ldots,\,c_N]^{\mathrm{T}}$ as
\begin{equation}
 (\tilde{E}_n^{\textSE} - \tilde{V}_n^{\textSE} - \tilde{K}_n^{\textSE})\mathbd{c}_n
= \mathbd{g}_n^{\textSE},
\label{eq:linear-eq-Shamloo}
\end{equation}
where $\tilde{E}_n^{\textSE}$, $\tilde{V}_n^{\textSE}$
and $\tilde{K}_n^{\textSE}$ are $n\times n$ matrices defined by
\begin{align*}
 \tilde{E}_n^{\textSE}
&= \left[
   \begin{array}{@{\,}l|ccc|l@{\,}}
   \omega_a(\SEt(-Nh))        & 0    & \cdots &0  & \omega_b(\SEt(Nh)) \\
   \omega_a(\SEt(-(N-1)h)) & 1      &        &\OO & \omega_b(\SEt(-(N-1)h))\\
   \multicolumn{1}{c|}{\vdots}  &     & \ddots & & \multicolumn{1}{c}{\vdots} \\
   \omega_a(\SEt((N-1)h)) & \OO &        &1      & \omega_b(\SEt((N-1)h)) \\
   \omega_a(\SEt(Nh)) & 0      & \cdots &0      & \omega_b(\SEt(Nh))
   \end{array}
   \right], \\
  \tilde{V}_n^{\textSE}
&= \left[
   \begin{array}{@{\,}l|clc|l@{\,}}
    \VolSEn[\omega_a](\SEt(-Nh))
   &\cdots
   & k(\SEt(-Nh),\SEt(jh))\SEtDiv(jh) J(j,h)(-Nh)
   &\cdots
   & \VolSEn[\omega_b](\SEt(-Nh)) \\
    \multicolumn{1}{c|}{\vdots} & & \multicolumn{1}{c}{\vdots}
   & & \multicolumn{1}{c}{\vdots}\\
   \VolSEn[\omega_a](\SEt(Nh))
   &\cdots
   & k(\SEt(Nh),\SEt(jh))\SEtDiv(jh) J(j,h)(Nh)
   &\cdots
   & \VolSEn[\omega_b](\SEt(Nh))
   \end{array}
   \right],\\
  \tilde{K}_n^{\textSE}
&= \left[
   \begin{array}{@{\,}l|clc|l@{\,}}
    \FredSEn[\omega_a](\SEt(-Nh))
   &\cdots
   & k(\SEt(-Nh),\SEt(jh))\SEtDiv(jh) h
   &\cdots
   & \FredSEn[\omega_b](\SEt(-Nh)) \\
    \multicolumn{1}{c|}{\vdots} & & \multicolumn{1}{c}{\vdots}
   & & \multicolumn{1}{c}{\vdots}\\
   \FredSEn[\omega_a](\SEt(Nh))
   &\cdots
   & k(\SEt(Nh),\SEt(jh))\SEtDiv(jh) h
   &\cdots
   & \FredSEn[\omega_b](\SEt(Nh))
   \end{array}
   \right],
\end{align*}
and $\mathbd{g}_n^{\textSE}$ is an $n$-dimensional vector defined by
\[
 \mathbd{g}_n^{\textSE} =
[g(\SEt(-Nh)),\,\ldots,\,g(\SEt(Nh))]^{\mathrm{T}}.
\]
By solving the system~\eqref{eq:linear-eq-Shamloo},
the approximate solution $\uSEn$ is determined by~\eqref{eq:uSEn}.
To conduct its error analysis,
Shamloo et al.\ introduced the following function space.

\begin{definition}
Let $\alpha$ and $d$ be positive constants with
$\alpha\leq 1$ and $d\leq \pi/2$.
Let $\rho(z) = \E^{\SEtInv(z)} = (z - a)/(b - z)$.
Then, $\LC_{\alpha}(\SEt(\domD_d))$ denotes the family of all functions
$f$ analytic in $\SEt(\domD_d)$
for which there exists a constant $\tilde{L}$ such that
\begin{equation}
|f(z)| \leq \tilde{L} \frac{|\rho(z)|^{\alpha}}{(1 + |\rho(z)|)^{2\alpha}}
 \label{eq:LC}
\end{equation}
holds for all $z\in\domD$.
\end{definition}

Under this definition,
Shamloo et al.\ provided the following error analysis.

\begin{theorem}[Shamloo et al.~{\cite[Theorem~4.1]{Shamloo}}]
\label{thm:Shamloo}
Assume that the solution $u$ belongs to $\LC_{\alpha}(\SEt(\domD_d))$.
Furthermore, assume that
all the following functions:
\begin{align*}
 \frac{k_1(t, \cdot)\omega_a(\cdot)}{\left\{\SEtInv(\cdot)\right\}'},\quad
 \frac{k_1(t, \cdot)S(j,h)(\SEtInv(\cdot))}{\left\{\SEtInv(\cdot)\right\}'}
\,\, (j=-N+1,\,\ldots,\,N-1),\quad
 \frac{k_1(t, \cdot)\omega_b(\cdot)}{\left\{\SEtInv(\cdot)\right\}'},\\
 \frac{k_2(t, \cdot)\omega_a(\cdot)}{\left\{\SEtInv(\cdot)\right\}'},\quad
 \frac{k_2(t, \cdot)S(j,h)(\SEtInv(\cdot))}{\left\{\SEtInv(\cdot)\right\}'}
\,\, (j=-N+1,\,\ldots,\,N-1),\quad
 \frac{k_2(t, \cdot)\omega_b(\cdot)}{\left\{\SEtInv(\cdot)\right\}'},
\end{align*}
belong to $\LC_{\alpha}(\SEt(\domD_d))$ uniformly
for all $t\in [a, b]$ and positive integer $N$.
Let $h$ be selected by the formula~\eqref{eq:h-SE}.
Then, there exists a constant $C$ independent of $N$
such that~\eqref{eq:Shamloo-error-analysis} holds,
where $\mu_N=\|(\tilde{E}_n^{\textSE} - \tilde{V}_n^{\textSE} - \tilde{K}_n^{\textSE})^{-1}\|_2$.
\end{theorem}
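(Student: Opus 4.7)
The approach is a standard perturbation/stability argument for collocation schemes. Set $A_n:=\tilde{E}_n^{\textSE}-\tilde{V}_n^{\textSE}-\tilde{K}_n^{\textSE}$, so that $A_n\mathbd{c}_n=\mathbd{g}_n^{\textSE}$ and $\mu_N=\|A_n^{-1}\|_2$. I would (i) construct a surrogate coefficient vector $\tilde{\mathbd{c}}_n$ whose induced function $\tilde{u}_N$ (via~\eqref{eq:uSEn}) closely matches $u$, (ii) substitute $\tilde{\mathbd{c}}_n$ into the collocation system to bound the resulting residual $A_n\tilde{\mathbd{c}}_n-\mathbd{g}_n^{\textSE}$, and (iii) convert the $\ell^2$ coefficient error into a uniform $C([a,b])$ error for $u-\uSEn$.

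Step~1 (surrogate). Since $u\in\LC_\alpha(\SEt(\domD_d))$ forces $u(a)=u(b)=0$, I would set $\tilde{c}_{-N}=\tilde{c}_N=0$ and $\tilde{c}_j=u(\SEt(jh))$ for $-N<j<N$; then $\tilde{u}_N(t)=\sum_{-N<j<N}u(\SEt(jh))\,S(j,h)(\SEtInv(t))$ is the classical SE-Sinc interpolant of $u$. It interpolates $u$ exactly at the interior collocation points ($\tilde{u}_N(\SEt(ih))=u(\SEt(ih))$ for $|i|<N$), while at the two endpoints $i=\pm N$ the mismatch is $|u(\SEt(\pm Nh))|=\OO(\E^{-\alpha Nh})=\OO(\E^{-\sqrt{\pi d\alpha N}})$ by the $\LC_\alpha$ decay. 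Standard Sinc theory for $\LC_\alpha$ endpoint-vanishing functions then gives $\|u-\tilde{u}_N\|_{C([a,b])}\le C_1\sqrt{N}\,\E^{-\sqrt{\pi d\alpha N}}$.

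Step~2 (residual). Subtracting the exact equation evaluated at $t_i=\SEt(ih)$ from the discrete one driven by $\tilde{u}_N$, the residual $r_i$ decomposes as the pointwise error $(\tilde{u}_N-u)(t_i)$, the two exact-integral differences $\int_a^{t_i}k_1(t_i,s)(u-\tilde{u}_N)(s)\D s$ and $\int_a^{b}k_2(t_i,s)(u-\tilde{u}_N)(s)\D s$, and the two Sinc errors $\int_a^{t_i}k_1(t_i,s)\tilde{u}_N(s)\D s-\VolSEn[\tilde{u}_N](t_i)$ and $\int_a^{b}k_2(t_i,s)\tilde{u}_N(s)\D s-\FredSEn[\tilde{u}_N](t_i)$. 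The first summand vanishes for $|i|<N$ and is $\OO(\E^{-\sqrt{\pi d\alpha N}})$ for $|i|=N$. The exact-integral differences collapse into the Sinc quadrature/indefinite-integration framework because $(u-\tilde{u}_N)(\SEt(jh))=0$ for $|j|<N$, so the corresponding discrete sums almost vanish and the difference is of quadrature-error size. For the last two summands, I would use the hypothesis that $k_m(t,\cdot)\omega_a/\SEtDiv$, $k_m(t,\cdot)S(j,h)(\SEtInv(\cdot))/\SEtDiv$, and $k_m(t,\cdot)\omega_b/\SEtDiv$ all lie in $\LC_\alpha(\SEt(\domD_d))$ uniformly in $t$ and $N$; this puts the integrand $k_m(t_i,\cdot)\tilde{u}_N(\cdot)$ within~\eqref{eq:LCminus1} with constants independent of $i$ and $N$, so Corollary~\ref{cor:SE-Sinc-quadrature} and Theorem~\ref{thm:SE-Sinc-indefinite} apply and bound both summands by $\OO(\E^{-\sqrt{\pi d\alpha N}})$ uniformly in $i$. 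Aggregating and using $\|\mathbd{r}_n\|_2\le\sqrt{n}\,\|\mathbd{r}_n\|_\infty$ yields $\|\mathbd{r}_n\|_2\le C_2\sqrt{N}\,\E^{-\sqrt{\pi d\alpha N}}$.

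Step~3 (function error) and main obstacle. Invertibility of $A_n$ gives $\|\mathbd{c}_n-\tilde{\mathbd{c}}_n\|_2\le\mu_N\|\mathbd{r}_n\|_2\le C_2\mu_N\sqrt{N}\,\E^{-\sqrt{\pi d\alpha N}}$. Writing $\uSEn-\tilde{u}_N$ as a linear combination of $\{\omega_a,\,S(-N+1,h)(\SEtInv(\cdot)),\,\ldots,\,S(N-1,h)(\SEtInv(\cdot)),\,\omega_b\}$ with coefficient vector $\mathbd{c}_n-\tilde{\mathbd{c}}_n$, Cauchy-Schwarz together with the classical uniform bound $\omega_a(t)^2+\omega_b(t)^2+\sum_{-N<j<N}S(j,h)(\SEtInv(t))^2\le C_3$ (Sinc-Parseval type) gives $\|\uSEn-\tilde{u}_N\|_{C([a,b])}\le\sqrt{C_3}\,\|\mathbd{c}_n-\tilde{\mathbd{c}}_n\|_2\le C\mu_N\sqrt{N}\,\E^{-\sqrt{\pi d\alpha N}}$, and the triangle inequality against Step~1 delivers~\eqref{eq:Shamloo-error-analysis} (absorbing $C_1\sqrt{N}$ into $C\mu_N\sqrt{N}$ since $\mu_N\ge 1$ in any reasonable regime). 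The hardest part will be Step~2: it is routine only if one exploits the Sinc-node cancellation $(u-\tilde{u}_N)(\SEt(jh))=0$ for $|j|<N$ carefully, so that the exact-integral differences inherit $\OO(\E^{-\sqrt{\pi d\alpha N}})$ instead of the naive $\OO(\sqrt{N}\E^{-\sqrt{\pi d\alpha N}})$ from $(b-a)\|u-\tilde{u}_N\|_\infty$, and if one verifies that the kernel-product hypothesis translates uniformly in $i$ and $N$ into the growth condition~\eqref{eq:LCminus1} required by Corollary~\ref{cor:SE-Sinc-quadrature} and Theorem~\ref{thm:SE-Sinc-indefinite}; otherwise the $\sqrt{N}$ factors proliferate and overshoot the target in~\eqref{eq:Shamloo-error-analysis}.
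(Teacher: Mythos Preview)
This theorem is not proved in the present paper; it is quoted from Shamloo et al.\ (their Theorem~4.1) precisely so that the author can criticize its hypotheses and the unestimated factor $\mu_N$. Section~\ref{sec:proofs} treats only the new Theorems~\ref{thm:SE-Sinc-collocation} and~\ref{thm:DE-Sinc-collocation}, so there is no in-paper proof to compare your proposal against.

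That said, your consistency-plus-stability outline is the natural route to such a bound and is broadly reasonable. Two technical points deserve attention. In Step~2, the integrand $k_m(t_i,\cdot)\tilde u_N(\cdot)$ is a sum of $2N-1$ basis contributions; even though each $k_m(t,\cdot)S(j,h)(\SEtInv(\cdot))/\{\SEtInv\}'$ lies in $\LC_\alpha$ with a uniform constant by hypothesis, the aggregate constant $L$ in~\eqref{eq:LCminus1} for the sum picks up a factor of order $N$ (the coefficients $u(\SEt(jh))$ are only bounded), which would propagate an extra $N$ through Corollary~\ref{cor:SE-Sinc-quadrature} and Theorem~\ref{thm:SE-Sinc-indefinite}. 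You flag this risk at the end but do not resolve it; without a sharper argument the residual estimate overshoots the target by a power of $N$. In Step~3, absorbing the $\mu_N$-free Step~1 term into $C\mu_N\sqrt N\,\E^{-\sqrt{\pi d\alpha N}}$ requires $\mu_N$ bounded away from zero, which is not part of the hypotheses; it would be cleaner to state the final bound as $C(1+\mu_N)\sqrt N\,\E^{-\sqrt{\pi d\alpha N}}$.
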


There are two points to be discussed in this theorem.
First, $u\in\LC_{\alpha}(\SEt(\domD_d))$ is assumed,
but this assumption requires $u(a)=u(b)=0$
owing to the inequality~\eqref{eq:LC}.
This condition is not generally satisfied.
Furthermore, checking $u\in\LC_{\alpha}(\SEt(\domD_d))$ is,
in principle, difficult because $u$ is an unknown function to be determined.
If we do not know the appropriate values of
the parameters $\alpha$ and $d$, we cannot launch the method because
these values are indispensable in selecting the mesh size $h$
according to~\eqref{eq:h-SE}.
Second, the convergence of the method is not proved
because there exists a non-estimated term $\mu_N$,
which clearly depends on $N$.

Furthermore, the implementation of this method is rather complicated
because the first and last columns of the matrices
$\tilde{E}_n^{\textSE}$, $\tilde{V}_n^{\textSE}$ and $\tilde{K}_n^{\textSE}$
are inconsistent.
This is primarily because $\uSEn(\SEt(ih))=c_i$ does not hold.
In the next section, an improved Sinc-collocation method
with the SE transformation is derived,
which resolves the aforementioned issues on implementation and the theorem.

\subsection{Sinc-collocation method by John and Ogbonna}

Here, we describe the Sinc-collocation method by
John and Ogbonna~\cite{JohnOgbo}.
Note that as a variable transformation, they employed
a slightly different DE transformation
\[
 t =
\tilde{\psi}^{\textDE}(x)
 = \frac{b - a}{2}\tanh\left(\frac{\pi}{4}\sinh x\right)
+\frac{b+a}{2},
\]
and selected $h$ as
\[
 h = \frac{\log(4 d N/\alpha)}{N},
\]
which is also slightly different from~\eqref{eq:h-DE}.
For the sake of convenience,
we introduce the approximated integral operator
$\tilde{\mathcal{V}}_N^{\textDE}: C([a, b])\to C([a, b])$
and $\tilde{\mathcal{K}}_N^{\textDE}: C([a, b])\to C([a, b])$ as
\begin{align*}
\tilde{\mathcal{V}}_N^{\textDE}[f](t)
&=\sum_{j=-N}^N k_1(t,\tilde{\psi}^{\textDE}(jh))f(\tilde{\psi}^{\textDE}(jh))\{\tilde{\psi}^{\textDE}\}'(jh)J(j,h)(\{\tilde{\psi}^{\textDE}\}^{-1}(t)),\\
\tilde{\mathcal{K}}_N^{\textDE}[f](t)
&=h\sum_{j=-N}^N k_1(t,\tilde{\psi}^{\textDE}(jh))f(\tilde{\psi}^{\textDE}(jh))\{\tilde{\psi}^{\textDE}\}'(jh).
\end{align*}
They set their approximate solution $\uDEn$ as
\begin{equation}
 \uDEn(t) = c_{-N}\omega_a(t)
+\sum_{j=-N+1}^{N-1} c_j S(j,h)(\{\tilde{\psi}^{\textDE}\}^{-1}(t)) + c_{N} \omega_b(t).
\label{eq:uDEn}
\end{equation}
Then, they substitute $\uDEn$ into the given equation~\eqref{eq:VFIE},
where the definite and indefinite integrals
are approximated by the DE-Sinc quadrature and
DE-Sinc indefinite integration, respectively.
The obtained equation is expressed as
\[
 \uDEn(t) - \tilde{\mathcal{V}}_N^{\textDE}[\uDEn](t) - \tilde{\mathcal{K}}_N^{\textDE}[\uDEn](t) = g(t).
\]
Discretizing this equation at $n=2N+1$ collocation points
$t=t_i^{\textDE}$ $(i=-N,\,\ldots,\,N)$, where
\begin{equation}
\label{eq:inconsistent-points}
t_i^{\textDE} =
 \begin{cases}
     a & (i = -N),\\
\tilde{\psi}^{\textDE}(ih) & (i= -N+1,\,\ldots,\,N-1),\\
     b & (i = N),
    \end{cases}
\end{equation}
we obtain the system of equations
with respect to $\mathbd{c}_n=[c_{-N},\,\ldots,\,c_N]^{\mathrm{T}}$ as
\begin{equation}
 (\tilde{E}_n^{\textDE} - \tilde{V}_n^{\textDE} - \tilde{K}_n^{\textDE})\mathbd{c}_n
= \tilde{\mathbd{g}}_n^{\textDE},
\label{eq:linear-eq-JohnOgbo}
\end{equation}
where $\tilde{E}_n^{\textDE}$, $\tilde{V}_n^{\textDE}$
and $\tilde{K}_n^{\textDE}$ are $n\times n$ matrices defined by
\begin{align*}
 \tilde{E}_n^{\textDE}
&= \left[
   \begin{array}{@{\,}l|ccc|l@{\,}}
   \multicolumn{1}{c|}{1}        & 0    & \cdots &0  & \multicolumn{1}{c}{0} \\
   \hline
   \omega_a(t^{\textDE}_{-N+1}) & 1      &        &\OO & \omega_b(t^{\textDE}_{-N+1})\\
   \multicolumn{1}{c|}{\vdots}  &     & \ddots & & \multicolumn{1}{c}{\vdots} \\
   \omega_a(t^{\textDE}_{N-1}) & \OO &        &1      & \omega_b(t^{\textDE}_{N-1}) \\
   \hline
   \multicolumn{1}{c|}{0} & 0      & \cdots &0      & \multicolumn{1}{c}{1}
   \end{array}
   \right], \\
  \tilde{V}_n^{\textDE}
&= \left[
   \begin{array}{@{\,}l|clc|l@{\,}}
\multicolumn{1}{c|}{0} & \cdots & \multicolumn{1}{c}{0} & \cdots &
\multicolumn{1}{c}{0} \\
\hline
    \tilde{\mathcal{V}}_N^{\textDE}[\omega_a](t_{-N+1}^{\textDE})
   &\cdots
   & k(t^{\textDE}_{-N+1},t^{\textDE}_j)\{\tilde{\psi}^{\textDE}\}'(jh) J(j,h)(-(N-1)h)
   &\cdots
   & \tilde{\mathcal{V}}_N^{\textDE}[\omega_b](t^{\textDE}_{-N+1}) \\
    \multicolumn{1}{c|}{\vdots} & & \multicolumn{1}{c}{\vdots}
   & & \multicolumn{1}{c}{\vdots}\\
   \tilde{\mathcal{V}}_N^{\textDE}[\omega_a](t^{\textDE}_{N-1})
   &\cdots
   & k(t^{\textDE}_{N-1},t^{\textDE}_j)\{\tilde{\psi}^{\textDE}\}'(jh) J(j,h)((N-1)h)
   &\cdots
   & \tilde{\mathcal{V}}_N^{\textDE}[\omega_b](t^{\textDE}_{N-1}) \\
 \hline
\tilde{\mathcal{V}}_N^{\textDE}[\omega_a](b) & \cdots & k(b,t^{\textDE}_j)\{\tilde{\psi}^{\textDE}\}'(jh) h & \cdots & \tilde{\mathcal{V}}_N^{\textDE}[\omega_b](b)
   \end{array}
   \right],\\
  \tilde{K}_n^{\textDE}
&= \left[
   \begin{array}{@{\,}l|clc|l@{\,}}
    \tilde{\mathcal{K}}_N^{\textDE}[\omega_a](a)
   &\cdots
   & k(a,t^{\textDE}_j)\{\tilde{\psi}^{\textDE}\}'(jh) h
   &\cdots
   & \tilde{\mathcal{K}}_N^{\textDE}[\omega_b](a) \\
 \hline
    \tilde{\mathcal{K}}_N^{\textDE}[\omega_a](t^{\textDE}_{-N+1})
   &\cdots
   & k(t^{\textDE}_{-N+1},t^{\textDE}_j)\{\tilde{\psi}^{\textDE}\}'(jh) h
   &\cdots
   & \tilde{\mathcal{K}}_N^{\textDE}[\omega_b](t^{\textDE}_{-N+1}) \\
    \multicolumn{1}{c|}{\vdots} & & \multicolumn{1}{c}{\vdots}
   & & \multicolumn{1}{c}{\vdots}\\
   \tilde{\mathcal{K}}_N^{\textDE}[\omega_a](t^{\textDE}_{N-1})
   &\cdots
   & k(t^{\textDE}_{N-1},t^{\textDE}_j)\{\tilde{\psi}^{\textDE}\}'(jh) h
   &\cdots
   & \tilde{\mathcal{K}}_N^{\textDE}[\omega_b](t^{\textDE}_{N-1})\\
\hline
   \tilde{\mathcal{K}}_N^{\textDE}[\omega_a](b)
   &\cdots
   & k(b,t^{\textDE}_j)\{\tilde{\psi}^{\textDE}\}'(jh) h
   &\cdots
   & \tilde{\mathcal{K}}_N^{\textDE}[\omega_b](b)
   \end{array}
   \right],
\end{align*}
and $\tilde{\mathbd{g}}_n^{\textDE}$ is an $n$-dimensional vector defined by
\[
 \tilde{\mathbd{g}}_n^{\textDE} =
[g(a),\,g(t^{\textDE}_{-N+1}),\,\ldots,\,g(t^{\textDE}_{N-1}),\,g(b)]^{\mathrm{T}}.
\]
By solving the system~\eqref{eq:linear-eq-JohnOgbo},
the approximate solution $\uDEn$ is determined by~\eqref{eq:uDEn}.
Its error analysis has not been explicitly provided.

Owing to the inconsistency of
their collocation points~\eqref{eq:inconsistent-points},
the first and last rows of the matrices of
$\tilde{E}_n^{\textDE}$, $\tilde{V}_n^{\textDE}$
and $\tilde{K}_n^{\textDE}$
are inconsistent, in addition to the first and last columns.
In the next section, this Sinc-collocation method
with the DE transformation is also improved to facilitate its implementation,
and its convergence theorem is provided.

\section{New Sinc-collocation methods}
\label{sec:new}

In this section, we describe
improvements of the two existing Sinc-collocation methods
described in the previous section.
The associated convergence theorems are also presented.

\subsection{New Sinc-collocation method with the SE transformation}
\label{sec:new-SE}

Here, we improve the Sinc-collocation method by Shamloo et al.
The main idea behind this improvement is in the approximate solution;
we set $\vSEn$ as~\eqref{eq:vSEn}.
Then, we substitute $\vSEn$ into the given equation~\eqref{eq:VFIE},
where the definite and indefinite integrals
are approximated by the SE-Sinc quadrature and
SE-Sinc indefinite integration, respectively.
The obtained equation is expressed as
\[
 \vSEn(t) - \VolSEn[\vSEn](t) - \FredSEn[\vSEn](t) = g(t).
\]
Discretizing this equation at $n=2N+1$ collocation points $t = \SEt(ih)$
($i=-N,\,\ldots,\,N$), we obtain the system of equations
with respect to $\mathbd{c}_n=[c_{-N},\,\ldots,\,c_N]^{\mathrm{T}}$ as
\begin{equation}
 (I_n - V_n^{\textSE} - K_n^{\textSE})\mathbd{c}_n
= \mathbd{g}_n^{\textSE},
\label{eq:linear-eq-SE-Sinc}
\end{equation}
where $I_n$ is an identity matrix of order $n$,
and $V_n^{\textSE}$
and $K_n^{\textSE}$ are $n\times n$ matrices whose $(i, j)$-th element is
\begin{align*}
 \left(V_n^{\textSE}\right)_{ij}
&= k_1(\SEt(ih),\SEt(jh))\SEtDiv(jh)J(j,h)(ih),
\quad i=-N,\,\ldots,\,N,\quad j=-N,\,\ldots,\,N,\\
 \left(K_n^{\textSE}\right)_{ij}
&= k_2(\SEt(ih),\SEt(jh))\SEtDiv(jh)h,
\quad i=-N,\,\ldots,\,N,\quad j=-N,\,\ldots,\,N.
\end{align*}
By solving the system~\eqref{eq:linear-eq-SE-Sinc},
the approximate solution $\vSEn$ is determined by~\eqref{eq:vSEn}.
For this method,
we present the following convergence theorem.
The proof is provided in Sect.~\ref{sec:proofs}.
Here, $\Vol$ and $\Fred$ are integral operators defined by
\begin{align*}
 \Vol[f](t) &= \int_a^t k_1(t,s) f(s)\D{s},\\
 \Fred[f](t) &= \int_a^b k_2(t,s) f(s)\D{s}.
\end{align*}

\begin{theorem}
\label{thm:SE-Sinc-collocation}
Let $\alpha$ and $d$ be positive constants with $\alpha\leq 1$
and $d<\pi$.
Assume that $k_1(z,\cdot)$ and $k_2(z,\cdot)$
belong to $\HC(\SEt(\domD_d))$
uniformly for $z\in\overline{\SEt(\domD_d)}$,
and $g$, $k_1(\cdot,w)$ and $k_2(\cdot,w)$
belong to $\MC_{\alpha}(\SEt(\domD_d))$
uniformly for $w\in\overline{\SEt(\domD_d)}$.
Furthermore, assume that the homogeneous equation
$(I - \Vol - \Fred)f = 0$ has only the trivial solution $f\equiv 0$.
Let $h$ be selected by the formula~\eqref{eq:h-SE}.
Then, there exists a positive integer $N_0$ such that
for all $N\geq N_0$,
the coefficient matrix $(I_n - V_n^{\textSE} - K_n^{\textSE})$
is invertible.
Furthermore, there exists a constant $C$ independent of $N$ such that
for all $N\geq N_0$,
\[
 \|u - \vSEn\|_{C([a,b])}
\leq C\sqrt{N}\E^{-\sqrt{\pi d \alpha N}}.
\]
\end{theorem}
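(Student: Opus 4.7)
The plan is to mimic the standard projection-method analysis of second-kind integral equations, using the generalized SE-Sinc interpolation operator $\ProjSE$ and the composite approximate operator $\mathcal{A}_N := \VolSEn + \FredSEn$. First I observe that, because $\vSEn$ has the specific form \eqref{eq:vSEn} and satisfies $\vSEn(\SEt(ih)) = c_i$, the discrete system \eqref{eq:linear-eq-SE-Sinc} is equivalent to the operator equation $(I - \ProjSE \mathcal{A}_N)\vSEn = \ProjSE g$ on the finite-dimensional range of $\ProjSE$. Writing $\mathcal{A} = \Vol + \Fred$ and applying $\ProjSE$ to the exact identity $u - \mathcal{A} u = g$, then subtracting, yields the key error identity
\begin{equation*}
(I - \ProjSE \mathcal{A}_N)(u - \vSEn) = (I - \ProjSE)u + \ProjSE (\mathcal{A} - \mathcal{A}_N) u,
\end{equation*}
which isolates the three objects I need to control: the interpolation error on $u$, the quadrature/indefinite-integration error on $u$, and the inverse of the approximate operator on the left.

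Next I would bound the right-hand side. A preparatory regularity step is needed: since $g$, $k_1(\cdot,w)$, and $k_2(\cdot,w)$ all lie in $\MC_\alpha(\SEt(\domD_d))$ uniformly in $w$, the integral operators $\Vol$ and $\Fred$ preserve this space, and the hypothesis $\ker(I - \mathcal{A}) = \{0\}$ together with the Fredholm alternative promotes the regularity of $g$ to $u \in \MC_\alpha(\SEt(\domD_d))$. Theorem~\ref{thm:SE-Sinc-general} then gives $\|(I - \ProjSE)u\|_{C([a,b])} \leq C\sqrt{N}\E^{-\sqrt{\pi d \alpha N}}$. For each fixed $t$, the map $s \mapsto k_j(t,s)u(s)$ is analytic on $\SEt(\domD_d)$ and bounded there, so it automatically satisfies \eqref{eq:LCminus1} uniformly in $t$; Theorem~\ref{thm:SE-Sinc-indefinite} and Corollary~\ref{cor:SE-Sinc-quadrature} applied pointwise in $t$ then yield $\|(\mathcal{A} - \mathcal{A}_N)u\|_{C([a,b])} \leq C\E^{-\sqrt{\pi d \alpha N}}$. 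A standard Lebesgue-constant estimate for Sinc interpolation gives $\|\ProjSE\|_{C([a,b])\to C([a,b])} = O(\log N)$, which is absorbed into the $\sqrt{N}$ factor.

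The hard part is showing that $I - \ProjSE \mathcal{A}_N$ is invertible for all sufficiently large $N$ with a uniform bound on the inverse. I would invoke Anselone's collectively compact operator framework. The three ingredients are: (i) $\mathcal{A}$ is compact on $C([a,b])$ by continuity of $k_1$ and $k_2$; (ii) $(I - \mathcal{A})^{-1}$ exists and is bounded by the trivial-kernel hypothesis and Fredholm theory; (iii) $\ProjSE \mathcal{A}_N \to \mathcal{A}$ pointwise and the sequence $\{\ProjSE \mathcal{A}_N\}$ is collectively compact. Pointwise convergence for $f\in C([a,b])$ follows from the splitting $\ProjSE \mathcal{A}_N f - \mathcal{A}f = \ProjSE(\mathcal{A}_N - \mathcal{A})f + (\ProjSE - I)\mathcal{A}f$, both terms tending to zero by the convergence theorems of Section~\ref{sec:preliminary} (noting $\mathcal{A}f \in \MC_\alpha(\SEt(\domD_d))$) and the mild growth of $\|\ProjSE\|$. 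Collective compactness can be verified through equicontinuity and uniform boundedness of $\{\ProjSE \mathcal{A}_N f : \|f\|_\infty \leq 1,\ N \geq N_0\}$, which follow from the summability of the Sinc quadrature weights (via the usual $\sum_j |\SEtDiv(jh)|h$-type bounds) and the continuity of the kernels. Anselone's theorem then yields a constant $M$ and an integer $N_0$ such that $\|(I - \ProjSE \mathcal{A}_N)^{-1}\|_{C([a,b])\to C([a,b])} \leq M$ for all $N \geq N_0$.

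Combining this uniform invertibility with the error identity gives
\begin{equation*}
\|u - \vSEn\|_{C([a,b])} \leq M\bigl(\|(I - \ProjSE)u\|_{C([a,b])} + \|\ProjSE\|\,\|(\mathcal{A} - \mathcal{A}_N)u\|_{C([a,b])}\bigr),
\end{equation*}
and substituting the bounds from the second paragraph yields the claimed rate $C\sqrt{N}\E^{-\sqrt{\pi d \alpha N}}$. I expect the verification of step (iii) — in particular a clean proof that the sequence $\{\ProjSE \mathcal{A}_N\}$ is collectively compact with the non-standard operator $\ProjSE$ involving both Sinc basis functions and the auxiliary $\omega_a,\omega_b$ — to be the most delicate technical point; the regularity transfer $g\in\MC_\alpha \Rightarrow u\in\MC_\alpha$ is a secondary but nontrivial subtask that has to be made precise.
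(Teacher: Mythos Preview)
Your error identity and the bounds on its right-hand side are correct, and the regularity step $u\in\MC_{\alpha}(\SEt(\domD_d))$ is exactly what the paper establishes in Sect.~\ref{sec:regularity}. The substantive divergence from the paper is in how uniform invertibility is obtained, and there your plan has a genuine obstruction.

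The paper never analyses $I-\ProjSE\mathcal{A}_N$ on $C([a,b])$. Instead it inserts the intermediate Nystr\"om equation $(I-\mathcal{A}_N)w=g$ \emph{without} any projection, proves $\|(\mathcal{A}-\mathcal{A}_N)\mathcal{A}_N\|\to 0$ directly from the Sinc quadrature/indefinite-integration estimates (Lemma~\ref{lem:operator-norm-converge}), and applies Atkinson's perturbation theorem (Theorem~\ref{thm:Atkinson}) to get $(I-\mathcal{A}_N)^{-1}$ uniformly bounded together with $\|u-w\|\le C\E^{-\sqrt{\pi d\alpha N}}$. The link to the collocation solution is then purely algebraic: because $\mathcal{A}_N\ProjSE=\mathcal{A}_N$ (the quadratures depend only on nodal values), one checks that $\vSEn=\ProjSE w$ (Proposition~\ref{prop:SE-Nystroem-collocation}), so that
$\|u-\vSEn\|\le\|u-\ProjSE u\|+\|\ProjSE\|\,\|u-w\|$,
and no inverse of $I-\ProjSE\mathcal{A}_N$ on the whole space is ever needed.

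Your route, by contrast, needs $(I-\ProjSE\mathcal{A}_N)^{-1}$ bounded on all of $C([a,b])$, since the term $(I-\ProjSE)u$ on the right of your identity lies outside the range of $\ProjSE$. The collectively compact argument you propose for this does not go through: already the family $\{\VolSEn\}_N$ fails to be collectively compact. For $\|f\|_{C([a,b])}\le 1$ and $k_1$ bounded away from zero one has
\[
\VolSEn f(t)-\VolSEn f(t')
=\int_{\SEtInv(t')}^{\SEtInv(t)}
\sum_{j=-N}^{N}k_1(t,\SEt(jh))f(\SEt(jh))\,\SEtDiv(jh)\,S(j,h)(y)\,\D y
+\text{(lower order)},
\]
and choosing $f$ with $f(\SEt(jh))=\operatorname{sign}S(j,h)(y_0)$ at a midpoint $y_0$ makes the integrand of order $\log N$ there; the modulus of continuity of $\VolSEn f$ near the corresponding interior point thus blows up like $\log N$, so equicontinuity fails. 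Composing with $\ProjSE$, whose range consists of Sinc combinations with mesh $h\to 0$ and whose norm itself grows like $\log N$, does not repair this. So step~(iii) is not merely ``delicate'': the Arzel\`a--Ascoli verification actually breaks down, and the Nystr\"om detour via $w$ is precisely the device that circumvents it.
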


In this theorem, no condition is assumed on the solution $u$,
whereas $u\in\LC_{\alpha}(\SEt(\domD_d))$ is assumed
in Theorem~\ref{thm:Shamloo}.
Furthermore, this theorem rigorously proves root-exponential convergence,
whereas Theorem~\ref{thm:Shamloo} does not because of $\mu_N$.

\subsection{New Sinc-collocation method with the DE transformation}

Here, we improve the Sinc-collocation method by
John and Ogbonna~\cite{JohnOgbo}.
The main idea behind this improvement is to replace the SE transformation
in the new Sinc-collocation method (described in Sect.~\ref{sec:new-SE})
with the DE transformation.
We introduce the approximated integral operator
$\VolDEn: C([a, b])\to C([a, b])$
and $\FredDEn: C([a, b])\to C([a, b])$ as
\begin{align*}
\VolDEn[f](t)
&=\sum_{j=-N}^N k_1(t,\DEt(jh))f(\DEt(jh))\DEtDiv(jh)J(j,h)(\DEtInv(t)),\\
\FredDEn[f](t)
&=h\sum_{j=-N}^N k_1(t,\DEt(jh))f(\DEt(jh))\DEtDiv(jh).
\end{align*}
After setting the approximate solution $\vDEn$ as
\begin{equation}
 \vDEn(t)
= c_{-N}\omega_a(t)
 + \sum_{j=-N}^N\left\{c_j - c_{-N}\omega_a(\DEt(jh)) - c_N\omega_b(\DEt(jh))\right\}S(j,h)(\DEtInv(t))
+ c_N\omega_b(t),
\label{eq:vDEn}
\end{equation}
we substitute $\vDEn$ into the given equation~\eqref{eq:VFIE},
where the definite and indefinite integrals
are approximated by the DE-Sinc quadrature and
DE-Sinc indefinite integration, respectively.
The obtained equation is expressed as
\[
 \vDEn(t) - \VolDEn[\vDEn](t) - \FredDEn[\vDEn](t) = g(t).
\]
Discretizing this equation at $n=2N+1$ collocation points $t = \DEt(ih)$
($i=-N,\,\ldots,\,N$), we obtain the system of equations
with respect to $\mathbd{c}_n=[c_{-N},\,\ldots,\,c_N]^{\mathrm{T}}$ as
\begin{equation}
 (I_n - V_n^{\textDE} - K_n^{\textDE})\mathbd{c}_n
= \mathbd{g}_n^{\textDE},
\label{eq:linear-eq-DE-Sinc}
\end{equation}
where $V_n^{\textDE}$
and $K_n^{\textDE}$ are $n\times n$ matrices whose $(i, j)$-th element is
\begin{align*}
 \left(V_n^{\textDE}\right)_{ij}
&= k_1(\DEt(ih),\DEt(jh))\DEtDiv(jh)J(j,h)(ih),
\quad i=-N,\,\ldots,\,N,\quad j=-N,\,\ldots,\,N,\\
 \left(K_n^{\textDE}\right)_{ij}
&= k_2(\DEt(ih),\DEt(jh))\DEtDiv(jh)h,
\quad i=-N,\,\ldots,\,N,\quad j=-N,\,\ldots,\,N,
\end{align*}
and $\mathbd{g}_n^{\textDE}$ is an $n$-dimensional vector defined by
\[
 \mathbd{g}_n^{\textDE} =
[g(\DEt(-Nh)),\,\ldots,\,g(\DEt(Nh))]^{\mathrm{T}}.
\]
By solving the system~\eqref{eq:linear-eq-DE-Sinc},
the approximate solution $\vDEn$ is determined by~\eqref{eq:vDEn}.
For this method,
we present the following convergence theorem.
The proof is provided in Sect.~\ref{sec:proofs}.

\begin{theorem}
\label{thm:DE-Sinc-collocation}
Let $\alpha$ and $d$ be positive constants with $\alpha\leq 1$
and $d<\pi/2$.
Assume that $k_1(z,\cdot)$ and $k_2(z,\cdot)$
belong to $\HC(\DEt(\domD_d))$
uniformly for $z\in\overline{\DEt(\domD_d)}$,
and $g$, $k_1(\cdot,w)$ and $k_2(\cdot,w)$
belong to $\MC_{\alpha}(\DEt(\domD_d))$
uniformly for $w\in\overline{\DEt(\domD_d)}$.
Furthermore, assume that the homogeneous equation
$(I - \Vol - \Fred)f = 0$ has only the trivial solution $f\equiv 0$.
Let $h$ be selected by the formula~\eqref{eq:h-DE}.
Then, there exists a positive integer $N_0$ such that
for all $N\geq N_0$,
the coefficient matrix $(I_n - V_n^{\textDE} - K_n^{\textDE})$
is invertible.
Furthermore, there exists a constant $C$ independent of $N$ such that
for all $N\geq N_0$,
\[
 \|u - \vDEn\|_{C([a,b])}
\leq C\E^{-\pi d N/\log(2 d N/\alpha)}.
\]
\end{theorem}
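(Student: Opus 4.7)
The plan is to mirror the proof of Theorem~\ref{thm:SE-Sinc-collocation} by recasting~\eqref{eq:linear-eq-DE-Sinc} as an operator equation in $C([a,b])$, then invoking a collectively-compact-operator argument; every SE-Sinc estimate is simply replaced by its DE-Sinc counterpart. First, a direct calculation shows that $\ProjDE$ from~\eqref{eq:ProjDE} interpolates at the collocation nodes $\{\DEt(ih)\}_{i=-N}^{N}$, and $\vDEn$ from~\eqref{eq:vDEn} belongs to its range. Consequently,~\eqref{eq:linear-eq-DE-Sinc} is equivalent to
\[
 (I - \ProjDE\VolDEn - \ProjDE\FredDEn)\vDEn = \ProjDE g,
\]
while the exact equation reads $(I - \Vol - \Fred)u = g$. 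Writing $A := \Vol + \Fred$ and $A_N := \ProjDE(\VolDEn + \FredDEn)$, subtraction gives the key identity
\[
 (I - A_N)(u - \vDEn) = (I - \ProjDE)g + (A - A_N)u,
\]
so the task reduces to (i) bounding the two right-hand-side source terms and (ii) establishing that $(I - A_N)^{-1}$ exists with $\sup_{N \geq N_0}\|(I - A_N)^{-1}\| < \infty$.

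For step (i), I would first verify that $u$, $\Vol u$, and $\Fred u$ all belong to $\MC_{\alpha}(\DEt(\domD_d))$. The membership $u \in \MC_{\alpha}(\DEt(\domD_d))$ follows by analytically continuing $u = g + \Vol u + \Fred u$ off $(a,b)$ into $\DEt(\domD_d)$ using the hypotheses on $k_1, k_2, g$, then checking the H\"older-type endpoint inequalities directly from those on $g$; the same reasoning gives $\Vol u, \Fred u \in \MC_{\alpha}(\DEt(\domD_d))$. Theorem~\ref{thm:DE-Sinc-general} applied to $g$ and to $Au$ then yields
\[
 \|(I-\ProjDE)g\|_{C([a,b])} + \|(I-\ProjDE)Au\|_{C([a,b])} \leq C \E^{-\pi d N/\log(2 d N/\alpha)}.
\]
To finish step (i), decompose $(A - A_N)u = (I - \ProjDE)Au + \ProjDE\{(\Vol - \VolDEn)u + (\Fred - \FredDEn)u\}$; the remaining two pieces are controlled by Theorem~\ref{thm:DE-Sinc-indefinite} and Corollary~\ref{cor:DE-Sinc-quadrature} applied to $s \mapsto k_1(t,s)u(s)$ and $s \mapsto k_2(t,s)u(s)$ respectively, uniformly in $t \in [a,b]$. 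The factor $\|\ProjDE\|$ that enters grows only logarithmically in $N$ and is harmlessly absorbed by the exponential DE rate.

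Step (ii) is where the main difficulty lies. The plan is to apply Anselone's theorem on collectively compact operator approximations. The operator $A$ is compact on $C([a,b])$ since $\Vol$ and $\Fred$ have continuous kernels; the family $\{A_N\}$ is finite-rank for each $N$ and collectively compact, which follows from the uniform-in-$N$ boundedness of the sequences $\{\VolDEn f\}$ and $\{\FredDEn f\}$ on the unit ball of $C([a,b])$, combined with the logarithmic bound on $\|\ProjDE\|$. The trivial-kernel hypothesis on $I - A$ together with the Fredholm alternative shows $(I - A)^{-1}$ is a bounded operator; Anselone's theorem then supplies $N_0$ and $M$ with $\|(I - A_N)^{-1}\| \leq M$ for all $N \geq N_0$. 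In matrix form this yields invertibility of $I_n - V_n^{\textDE} - K_n^{\textDE}$, and combining this uniform bound with step (i) gives the claimed error rate. The most delicate technical obstacle is the regularity transfer $g \in \MC_{\alpha}(\DEt(\domD_d)) \Rightarrow u \in \MC_{\alpha}(\DEt(\domD_d))$ on the narrower DE strip with $d < \pi/2$, which requires a careful verification that $\Vol$ and $\Fred$ preserve $\MC_{\alpha}$ on exactly this transformed domain.
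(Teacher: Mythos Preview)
Your step~(i) is fine and essentially matches the paper: the regularity $u\in\MC_{\alpha}(\DEt(\domD_d))$ and the consistency bounds on $(I-\ProjDE)g$, $(\Vol-\VolDEn)u$, $(\Fred-\FredDEn)u$ are exactly what Sections~\ref{sec:regularity} and~\ref{sec:DE-Nystroem} establish.

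The gap is in step~(ii). Your justification of collective compactness of $\{A_N\}$ with $A_N=\ProjDE(\VolDEn+\FredDEn)$ does not work as written. Uniform boundedness of $\VolDEn+\FredDEn$ together with $\|\ProjDE\|=\OO(\log N)$ gives only $\|A_N\|=\OO(\log N)$; this bound \emph{grows}, and a collectively compact family must in particular be uniformly bounded in operator norm. Nor is pointwise convergence $A_N f\to A f$ for arbitrary $f\in C([a,b])$ evident: the quadrature and indefinite-integration errors in Corollaries~\ref{cor:DE-Sinc-quadrature} and Theorem~\ref{thm:DE-Sinc-indefinite} require analytic integrands, not merely continuous ones, and $\ProjDE$ applied to $\VolDEn f$ is hard to control because $J(j,h)(\DEtInv(\cdot))$ blows up on $\DEt(\domD_d)$ as $h\to 0$ (see~\eqref{eq:bound-J}), so Theorem~\ref{thm:DE-Sinc-general} cannot be invoked to tame the projection there. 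Anselone's hypotheses are therefore not verified.

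The paper avoids this by \emph{not} applying a perturbation theorem to $A_N$ at all. Instead it introduces the intermediate Nystr\"om equation $(I-\VolDEn-\FredDEn)w=g$ (no projection) and applies Atkinson's theorem (Theorem~\ref{thm:Atkinson}) with $\mathcal{X}_n=\VolDEn+\FredDEn$. The crucial condition $\|(\mathcal{X}-\mathcal{X}_n)\mathcal{X}_n\|\to 0$ (Lemma~\ref{lem:operator-norm-converge-DE}) succeeds precisely because the \emph{inner} application of $\mathcal{X}_n$ produces a function built from the analytic kernels $k_1,k_2$, so the outer quadrature error can be bounded via Corollary~\ref{cor:DE-Sinc-quadrature} and Theorem~\ref{thm:DE-Sinc-indefinite}. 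Once $(I-\VolDEn-\FredDEn)^{-1}$ is uniformly bounded and $\|u-w\|$ is estimated, the elementary identity $\vDEn=\ProjDE w$ (Proposition~\ref{prop:DE-Nystroem-collocation}) transfers everything to the collocation solution, and the single factor $\|\ProjDE\|=\OO(\log N)$ that appears in $\|u-\vDEn\|\leq\|u-\ProjDE u\|+\|\ProjDE\|\|u-w\|$ is harmlessly absorbed. The Nystr\"om detour is not cosmetic; it is what makes the stability argument go through.
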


The convergence rate of this theorem
is significantly higher than that of Theorem~\ref{thm:SE-Sinc-collocation}.

\section{Numerical examples}
\label{sec:numer}

This section presents numerical results.
All the programs were written in C with double-precision floating-point
arithmetic.
The source code is available at
\url{https://github.com/okayamat/sinc-colloc-vol-fred}.

First, we consider the following equation,
which is relatively easy to solve numerically.

\begin{example}[Shamloo et al.~{\cite[Example~5.1]{Shamloo}}]
\label{ex:1}
Consider the following equation
\[
 u(t) - \int_0^t (ts) u(s)\D{s} - \int_0^1 (ts) u(s)\D{s}
= \frac{2}{3}t - \frac{1}{3}t^4,\quad 0\leq t\leq 1,
\]
whose solution is $u(t)=t$.
\end{example}

In this case, the assumptions of Theorem~\ref{thm:SE-Sinc-collocation}
are fulfilled with $\alpha=1$ and $d=3.14$ (slightly less than $\pi$),
and those of Theorem~\ref{thm:DE-Sinc-collocation}
are fulfilled with $\alpha=1$ and $d=1.57$ (slightly less than $\pi/2$).
Therefore, these values were used for the selection formulas
of $h$ in~\eqref{eq:h-SE} and~\eqref{eq:h-DE}, respectively.
For the original Sinc-collocation methods,
Shamloo et al.~\cite{Shamloo} fixed
the formula of $h$ as $h = \pi/\sqrt{N}$,
and John--Ogbonna~\cite{JohnOgbo} fixed
the formula of $h$ as $h = \log(\pi N)/N$.
Therefore, these formulas were used for computation
in the original Sinc-collocation methods.
The error was examined at 4096 equally spaced points
over the given interval, and the maximum error was plotted
on the graph.
The result is shown in Fig.~\ref{fig:example1}.
In the top graph, we observe that
the Sinc-collocation methods with the DE transformation
converge much faster than those with the SE transformation.
The bottom graph shows the condition number (with the infinity norm)
of the resulting linear system.
The result indicates that the system is well-conditioned
for all methods.

Next, we consider the following equation,
which is not easy to solve numerically, particularly
for Gaussian quadrature-based methods.
This is due to the derivative singularity at the endpoints.

\begin{example}
\label{ex:2}
Consider the following equation
\[
 u(t) - \int_0^t s^{t+1/2} u(s)\D{s} - \int_0^1 (1-s)^t u(s)\D{s}
= \sqrt{t} - \frac{t^{t+2}}{t+2} - \Beta\left(\frac{3}{2},t+1\right),
\quad 0\leq t\leq 1,
\]
whose solution is $u(t)=\sqrt{t}$.
Here, $\Beta(\alpha,\beta)$ is the beta function.
\end{example}

In this case, the assumptions of Theorem~\ref{thm:SE-Sinc-collocation}
are fulfilled with $\alpha=1/2$ and $d=3.14$,
and those of Theorem~\ref{thm:DE-Sinc-collocation}
are fulfilled with $\alpha=1/2$ and $d=1.11$.
Therefore, these values were used for the selection formulas
of $h$ in~\eqref{eq:h-SE} and~\eqref{eq:h-DE}, respectively.
The error was examined at 4096 equally spaced points
over the given interval, and the maximum error was plotted
on the graph.
The result is shown in Fig.~\ref{fig:example2},
which exhibits a pattern similar to Example~\ref{ex:1}.
It should be emphasized that
the convergence of the original Sinc-collocation methods is not proved,
whereas that of the new Sinc-collocation methods is proved
by Theorems~\ref{thm:SE-Sinc-collocation}
and~\ref{thm:DE-Sinc-collocation}.
Furthermore,
implementation of the original Sinc-collocation methods
is rather complicated because there exists an
inconsistency in the coefficient matrices.

Finally, we consider the following equation
from a recent paper~\cite[Example 1]{fermo25:_volter_fredh},
where the kernel $k_1(t,s)$ is slightly modified
so that it has derivative singularity at $s=-1$.

\begin{example}
\label{ex:3}
Consider the following equation
\begin{align*}
& u(t) - \frac{1}{2\pi}\int_{-1}^t t \E^{-s} u(s) (t - s)\sqrt{1 + s}\D{s}
 + \frac{1}{\pi}\int_{-1}^1 (s+t^2) u(s) \sqrt{1 - s^2}\D{s}\\
&= \sqrt{1+t} - \frac{t}{2\pi}\left\{(t+3)\E^{-t} + \E (t - 1)\right\}
+\frac{16\sqrt{2}}{105\pi}(7t^2 + 1),
\quad -1\leq t\leq 1,
\end{align*}
whose solution is $u(t)=\sqrt{1+t}$.
\end{example}

In this case, the assumptions of Theorem~\ref{thm:SE-Sinc-collocation}
are fulfilled with $\alpha=1/2$ and $d=3.14$,
and those of Theorem~\ref{thm:DE-Sinc-collocation}
are fulfilled with $\alpha=1/2$ and $d=1.57$.
Therefore, these values were used for the selection formulas
of $h$ in~\eqref{eq:h-SE} and~\eqref{eq:h-DE}, respectively.
The error was examined at 4096 equally spaced points
over the given interval, and the maximum error was plotted
on the graph.
The result is shown in Fig.~\ref{fig:example3},
in a double logarithmic chart.
In this figure, the result of an existing method based on
the product and Gauss rules~\cite{fermo25:_volter_fredh}
(implemented in MATLAB) is also shown.
In this graph as well, we observe that
the Sinc-collocation methods with the DE transformation
converge much faster than those with the SE transformation.
Furthermore, we observe that
the Sinc-collocation methods converge exponentially (the line is curved),
whereas the existing method converges polynomially (the line is straight).
The condition number of all the systems remains at a low level.

\begin{figure}[htpb]
{\centering
\includegraphics[scale=.75]{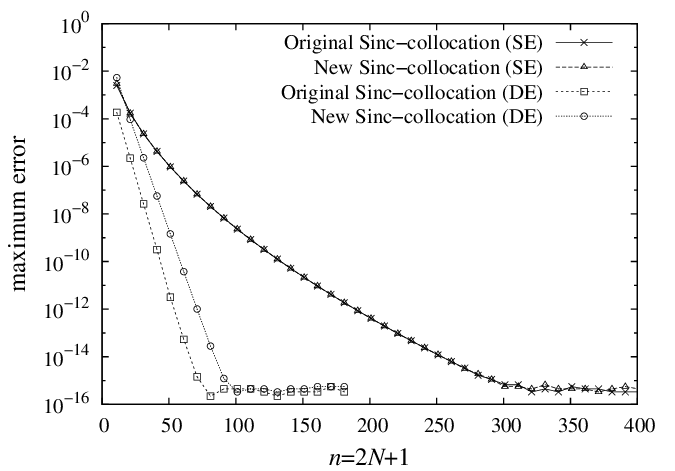}\\
\includegraphics[scale=.75]{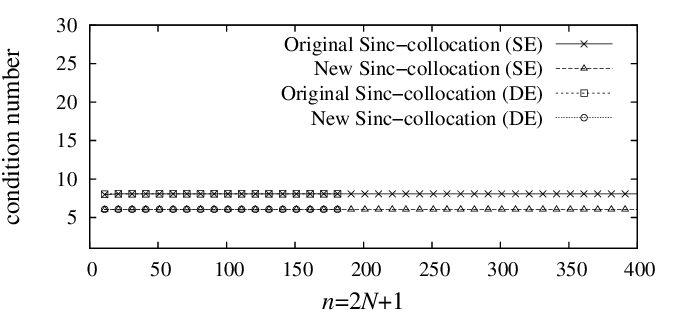}
\caption{Results of Example~\ref{ex:1}: (top) convergence profile; (bottom) condition number of the resulting matrix.}\label{fig:example1}
}
\end{figure}
\begin{figure}[htpb]
{\centering
\includegraphics[scale=.75]{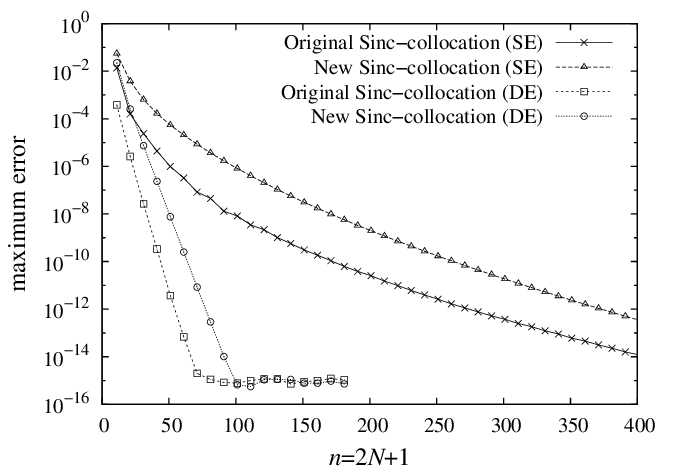}\\
\includegraphics[scale=.75]{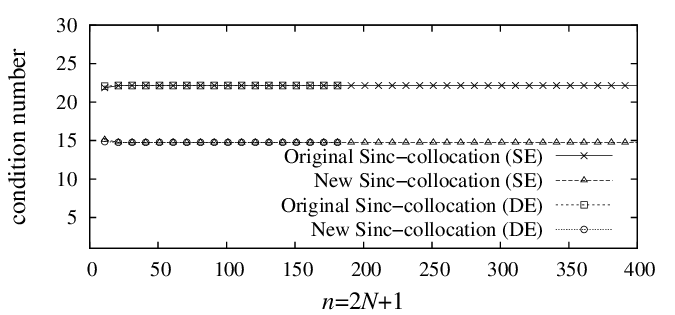}
\caption{Results of Example~\ref{ex:2}: (top) convergence profile; (bottom) condition number of the resulting matrix.}\label{fig:example2}
}
\end{figure}
\begin{figure}[htpb]
{\centering
\includegraphics[scale=.75]{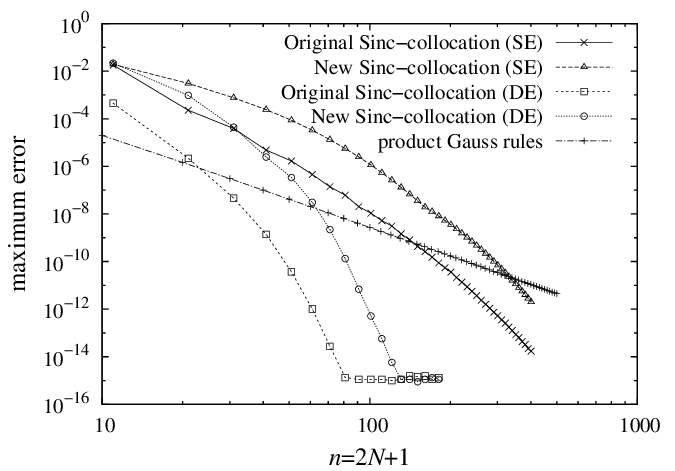}\\
\includegraphics[scale=.75]{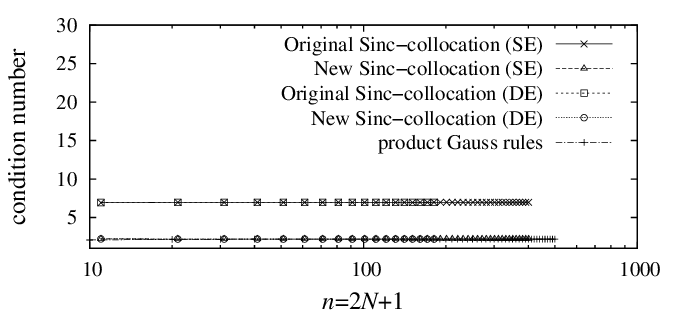}
\caption{Results of Example~\ref{ex:3}: (top) convergence profile; (bottom) condition number of the resulting matrix.}\label{fig:example3}
}
\end{figure}

\section{Proofs}
\label{sec:proofs}

In this section,
proofs of the new theorems stated in Sect.~\ref{sec:new} are provided.

\subsection{Sketch of the proof of Theorem~\ref{thm:SE-Sinc-collocation}}
\label{sec:sketch-SE}

Here, for readers' convenience, we present
the organization of the proof of Theorem~\ref{thm:SE-Sinc-collocation}.
In this proof, we consider the following three equations:
\begin{align}
 (I - \Vol - \Fred) u &= g, \nonumber\\
 (I - \VolSEn - \FredSEn) w &= g, \label{eq:SE-Nystroem-symbol}\\
 (I - \ProjSE\VolSEn - \ProjSE\FredSEn) v &= \ProjSE g.
 \label{eq:SE-collocation-symbol}
\end{align}
In Sect.~\ref{sec:regularity},
we analyze the first equation,
which is nothing but~\eqref{eq:VFIE},
and show that the equation has
a unique solution $u\in\MC_{\alpha}(\SEt(\domD_d))$.
In Sect.~\ref{sec:SE-Nystroem},
we analyze the second equation,
and show that~\eqref{eq:SE-Nystroem-symbol} has
a unique solution $w\in C([a,b])$, and there exists a
constant $C$ independent of $N$ such that
\begin{equation}
 \|u - w\|_{C([a,b])}\leq C \E^{-\sqrt{\pi d \alpha N}}.
\label{eq:error-SE-Nystroem}
\end{equation}
In Sect.~\ref{sec:SE-collocation},
we analyze the third equation,
and show that~\eqref{eq:SE-collocation-symbol} has
a unique solution $v \in C([a,b])$,
which means that the system~\eqref{eq:linear-eq-SE-Sinc} is
uniquely solvable, i.e.,
the coefficient matrix $(I_n - V_n^{\textSE} - K_n^{\textSE})$
is invertible.
Furthermore, we show that $v = \vSEn = \ProjSE w$ holds.
Thus, we have
\begin{align}
 \|u - \vSEn\|_{C([a,b])}
\leq \|u - \ProjSE u\|_{C([a,b])}
 + \|\ProjSE u - \ProjSE w\|_{C([a,b])}
\leq \|u - \ProjSE u\|_{C([a,b])}
 + \|\ProjSE\|_{\mathcal{L}(C([a,b]),C([a,b]))} \| u - w\|_{C([a,b])}.
\label{eq:final-error-SE}
\end{align}
Because $u\in\MC_{\alpha}(\SEt(\domD_d))$,
we can use Theorem~\ref{thm:SE-Sinc-general} for the first term.
For the second term, we can use~\eqref{eq:error-SE-Nystroem}
and the following lemma.

\begin{lemma}[Okayama~{\cite[Lemma~7.2]{OkayamaFredholm}}]
Let $\ProjSE$ be defined by~\eqref{eq:ProjSE}.
Then, there exists a constant $C$ independent of $N$ such that
\[
 \|\ProjSE\|_{\mathcal{L}(C([a,b]),C([a,b]))}
\leq C\log(N+1).
\]
\end{lemma}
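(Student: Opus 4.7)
The plan is to bound $|\ProjSE[f](t)|$ pointwise in $t\in[a,b]$ by a constant multiple of $\|f\|_{C([a,b])}\log(N+1)$. Fix $t\in[a,b]$ and set $x=\SEtInv(t)\in\mathbb{R}$ (the endpoint cases $t=a,b$ are handled separately since $\ProjSE[f](a)=f(a)$ and $\ProjSE[f](b)=f(b)$ by construction, so we may assume $t\in(a,b)$). Using the triangle inequality on the defining formula~\eqref{eq:ProjSE}, split the bound into three contributions: one from $c_{-N}\omega_a(t)$, one from $c_N\omega_b(t)$, and one from the Sinc sum.

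For the boundary terms, observe that $0\le\omega_a(t),\omega_b(t)\le 1$ for $t\in[a,b]$, so each contributes at most $\|f\|_{C([a,b])}$. For the interior sum, first apply the triangle inequality inside the braces:
\[
\bigl|f(\SEt(jh))-f(\SEt(-Nh))\omega_a(\SEt(jh))-f(\SEt(Nh))\omega_b(\SEt(jh))\bigr|\le 3\|f\|_{C([a,b])},
\]
again using $0\le\omega_a(\SEt(jh)),\omega_b(\SEt(jh))\le 1$. Factoring the bound out reduces the problem to controlling the Lebesgue function
\[
\Lambda_N(x)=\sum_{j=-N}^N|S(j,h)(x)|.
\]

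The key step, and the main technical obstacle, is the classical logarithmic estimate $\Lambda_N(x)\le C\log(N+1)$ uniformly in $x\in\mathbb{R}$. I would argue as follows. Let $k$ be the integer with $|x-kh|$ minimal (so $|x-kh|\le h/2$). The term $j=k$ contributes at most $1$. For $j\neq k$, use the crude bound $|S(j,h)(x)|\le h/(\pi|x-jh|)$, which gives
\[
\sum_{j=-N,j\ne k}^{N}|S(j,h)(x)|\le\frac{1}{\pi}\sum_{j=-N,j\ne k}^{N}\frac{h}{|x-jh|}\le\frac{2}{\pi}\sum_{m=1}^{2N}\frac{1}{m-1/2},
\]
where the last step uses $|x-jh|\ge h(|j-k|-1/2)$. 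The harmonic-type sum is $O(\log(N+1))$, establishing the claim. (This is exactly the Lebesgue constant bound for Sinc interpolation at equally spaced nodes, and is the step that dictates the final $\log(N+1)$ factor.)

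Combining the three pieces yields $|\ProjSE[f](t)|\le(2+3C\log(N+1))\|f\|_{C([a,b])}$ uniformly in $t$, which gives the claimed operator-norm bound (absorbing constants into $C$). The substantive work is concentrated in the Lebesgue-constant estimate above; the rest is essentially bookkeeping using the uniform bounds $0\le\omega_a,\omega_b\le 1$ and the triangle inequality. Since the lemma is quoted directly from~\cite{OkayamaFredholm}, one may alternatively simply cite that source and omit the derivation of $\Lambda_N(x)$.
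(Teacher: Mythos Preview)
The paper does not prove this lemma; it is quoted verbatim from~\cite{OkayamaFredholm} and used as a black box in the estimate~\eqref{eq:final-error-SE}. So there is no ``paper's own proof'' to compare against, and your direct argument is a welcome supplement rather than a duplication.

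Your argument is correct in substance: the decomposition into boundary terms plus a Sinc Lebesgue constant, followed by the standard harmonic-sum bound on $\Lambda_N$, is exactly how such estimates are obtained. One small factual slip: you write $\ProjSE[f](a)=f(a)$ and $\ProjSE[f](b)=f(b)$, but in fact $\ProjSE[f](a)=f(\SEt(-Nh))$ and $\ProjSE[f](b)=f(\SEt(Nh))$ (as $t\to a$ one has $\SEtInv(t)\to-\infty$, so every Sinc term vanishes and only $f(\SEt(-Nh))\omega_a(a)$ survives). This does not affect your bound, since $|f(\SEt(\pm Nh))|\le\|f\|_{C([a,b])}$ anyway, but it is worth stating correctly. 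Also, in the Lebesgue-constant step you implicitly assume the nearest node index $k$ lies in $\{-N,\ldots,N\}$; when $|x|$ is large this need not hold, but the same inequality $|x-jh|\ge h(|j-k|-1/2)$ with $|j-k|\ge1$ for all $j$ in range still gives the $O(\log(N+1))$ bound, so the conclusion stands.
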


Thus, we obtain the desired error bound
of Theorem~\ref{thm:SE-Sinc-collocation}.

\subsection{On the first equation~\eqref{eq:VFIE}}
\label{sec:regularity}

Here, we prove $u\in\MC_{\alpha}(\domD)$
($\domD$ is either $\SEt(\domD_d)$ or $\DEt(\domD_d)$).
The important tool here is the Fredholm alternative theorem.
We start with showing the following lemma.

\begin{lemma}
\label{lem:compact}
Assume that $k_1(z,\cdot)$ and $k_2(z,\cdot)$
belong to $\HC(\domD)$ for all $z\in\overline{\domD}$,
and $k_1(\cdot,w)$ and $k_2(\cdot,w)$
belong to $\HC(\domD)$ for all $w\in\overline{\domD}$.
Then, the operator $(\Vol + \Fred) :\HC(\domD)\to \HC(\domD)$ is compact.
\end{lemma}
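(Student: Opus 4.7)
The plan is to verify relative compactness of the image $(\Vol+\Fred)(B)$ of the closed unit ball $B$ of $\HC(\domD)$ by combining the Arzelà--Ascoli theorem on the compact set $\overline{\domD}$ with Weierstrass' theorem that a uniform limit of analytic functions on $\domD$ remains analytic. Thus the task reduces to extending $\Vol$ and $\Fred$ to act on $\HC(\domD)$ in the first place, then establishing uniform boundedness and equicontinuity of $(\Vol+\Fred)(B)$ on $\overline{\domD}$.

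For $\Fred$, the expression $(\Fred f)(t)=\int_a^b k_2(t,s) f(s)\D{s}$ makes sense for complex $t\in\overline{\domD}$ as written, and analyticity in $t$ on $\domD$ together with continuity on $\overline{\domD}$ follow from the hypothesis $k_2(\cdot,w)\in\HC(\domD)$ via differentiation under the integral sign and Morera's theorem. For $\Vol$, I would fix for each $t\in\overline{\domD}$ a rectifiable path $\gamma_t$ from $a$ to $t$ lying in $\overline{\domD}$ and set $(\Vol f)(t)=\int_{\gamma_t} k_1(t,s) f(s)\D{s}$; the hypothesis $k_1(z,\cdot)\in\HC(\domD)$ ensures that the integrand is analytic in $s$, so Cauchy's theorem yields path-independence. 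Analyticity of $(\Vol f)(t)$ in $t$ follows by decomposing $\gamma_t$, for $t$ near a fixed $t_0$, into a fixed path ending at $t_0$ followed by a short segment from $t_0$ to $t$, and differentiating each piece in $t$.

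Next I would verify equicontinuity. For $\Fred$, $(\Fred f)(t_1)-(\Fred f)(t_2)=\int_a^b [k_2(t_1,s)-k_2(t_2,s)]f(s)\D{s}$ is bounded by $(b-a)\|f\|_{\HC(\domD)}\sup_{s\in[a,b]}|k_2(t_1,s)-k_2(t_2,s)|$, which tends to zero uniformly over $f\in B$ by uniform continuity of $k_2$ on the compact set $\overline{\domD}\times\overline{\domD}$. For $\Vol$, I would choose $\gamma_{t_1}$ to be $\gamma_{t_2}$ extended by a short arc $\sigma$ from $t_2$ to $t_1$ and split the difference as $\int_\sigma k_1(t_1,s) f(s)\D{s}+\int_{\gamma_{t_2}}[k_1(t_1,s)-k_1(t_2,s)]f(s)\D{s}$; the first term is controlled by the arc length times a uniform bound on $k_1$, and the second by uniform continuity of $k_1$. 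Uniform boundedness is then an easy by-product of the same bounds.

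With both properties in hand, Arzelà--Ascoli yields relative compactness of $(\Vol+\Fred)(B)$ in $C(\overline{\domD})$, and Weierstrass upgrades any subsequential limit to a function in $\HC(\domD)$, giving the desired compactness of the operator. The main obstacle I anticipate is the rigorous complex extension of $\Vol$: one must justify well-definedness, path-independence, and holomorphy of the path integral as a function of its upper endpoint. These hinge on elevating the separate analyticity of each kernel in each variable to joint analyticity on $\domD\times\domD$ (via Hartogs or a direct continuity argument using compactness of $\overline{\domD}$), after which the estimates above are routine.
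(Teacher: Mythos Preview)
Your proposal is correct and follows essentially the same route as the paper: show that the image of the unit ball under $\Vol+\Fred$ is uniformly bounded and equicontinuous on $\overline{\domD}$, then invoke Arzel\`a--Ascoli together with preservation of analyticity under uniform limits. The paper's own proof is much terser---it simply asserts these two properties and cites the Arzel\`a--Ascoli theorem for complex functions (Rudin, Theorem~11.28)---whereas you have spelled out the complex extension of $\Vol$ via path integrals and the equicontinuity estimates that the paper leaves implicit.
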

\begin{proof}
It is easy to show that
the operator $(\Vol + \Fred)$ maps the set $\{f : \|f\|_{\HC(\domD)}\leq 1\}$
onto a uniformly bounded and equicontinuous set.
Thus, the claim follows from the Arzel\`{a}--Ascoli theorem
for complex functions (cf. Rudin~\cite[Theorem 11.28]{rudin87:_real}).
\end{proof}

Therefore, we obtain the following result
by the Fredholm alternative theorem.

\begin{theorem}
\label{thm:u-HC}
Assume that all the assumptions of Lemma~\ref{lem:compact} are fulfilled.
Furthermore, assume that
the homogeneous equation $(I - \Vol - \Fred)f = 0$
has only the trivial solution $f\equiv 0$.
Then, the operator $(I - \Vol - \Fred):\HC(\domD)\to\HC(\domD)$
has a bounded inverse,
$(I - \Vol - \Fred)^{-1}:\HC(\domD)\to\HC(\domD)$.
Furthermore, if $g\in\HC(\domD)$,
then~\eqref{eq:VFIE} has a unique solution $u\in\HC(\domD)$.
\end{theorem}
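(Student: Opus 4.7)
The plan is to apply the Riesz--Schauder theory (Fredholm alternative) to the operator $I - (\Vol + \Fred)$ acting on the Banach space $\HC(\domD)$. To do so, three ingredients must be checked: (i) $\HC(\domD)$ is a Banach space, which is guaranteed by the Definition of $\HC(\domD)$; (ii) the operator $\Vol + \Fred$ is a compact operator on $\HC(\domD)$, which is precisely the content of Lemma~\ref{lem:compact}; and (iii) the null space of $I - (\Vol + \Fred)$ on $\HC(\domD)$ consists only of the zero function. Once these are in place, the Riesz--Schauder theorem immediately yields that $I - (\Vol + \Fred):\HC(\domD)\to\HC(\domD)$ is a bijection with bounded inverse, from which both claims of the theorem follow: boundedness of the inverse, and existence and uniqueness of $u=(I-\Vol-\Fred)^{-1}g\in\HC(\domD)$ for any $g\in\HC(\domD)$.

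The only delicate point is (iii). The hypothesis in the theorem asserts that $(I-\Vol-\Fred)f=0$ admits only the trivial solution, but to apply Fredholm theory on $\HC(\domD)$ one must verify that this triviality persists when $f$ is sought in the larger complex-analytic space. I would argue as follows. Suppose $f\in\HC(\domD)$ satisfies $(I-\Vol-\Fred)f=0$. Since $[a,b]\subset\overline{\domD}$ and $f$ is continuous on $\overline{\domD}$, its restriction $f|_{[a,b]}$ is a continuous function on $[a,b]$ satisfying the same scalar equation (the integrals in $\Vol f$ and $\Fred f$ are along $[a,t]$ and $[a,b]$, so the restricted equation is identical). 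By the triviality assumption, $f\equiv 0$ on $[a,b]$. Then, since $\domD$ is a connected open set containing $(a,b)$ and $f$ is holomorphic on $\domD$ and vanishes on the real segment $(a,b)$ (which has accumulation points in $\domD$), the identity theorem forces $f\equiv 0$ on $\domD$, and hence on $\overline{\domD}$ by continuity.

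With (i)--(iii) established, the Fredholm alternative yields a bounded inverse $(I-\Vol-\Fred)^{-1}:\HC(\domD)\to\HC(\domD)$. For any $g\in\HC(\domD)$, setting $u=(I-\Vol-\Fred)^{-1}g$ produces an element of $\HC(\domD)$ solving~\eqref{eq:VFIE} in that space; uniqueness again follows because the null space is trivial.

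The main obstacle, as indicated above, is the bookkeeping around step (iii): one must be careful that the hypothesis of trivial null space, which is most naturally stated for real-valued continuous solutions on $[a,b]$, actually rules out nontrivial holomorphic solutions on $\domD$. The identity theorem for analytic functions in a connected domain handles this cleanly, so once this link is made the rest of the proof is routine application of Riesz--Schauder theory.
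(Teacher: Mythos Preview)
Your approach is correct and coincides with the paper's: the proof there is literally the single sentence ``we obtain the following result by the Fredholm alternative theorem,'' relying on the compactness from Lemma~\ref{lem:compact}. Your additional care in step~(iii)---passing from triviality of the kernel on $[a,b]$ to triviality in $\HC(\domD)$ via restriction and the identity theorem---is a sound clarification that the paper leaves implicit, but it does not constitute a different route.
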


For $u\in\MC_{\alpha}(\domD)$,
we must further show the H\"{o}lder continuity at the endpoints.
This is done in the proof of the next theorem.

\begin{theorem}
Assume that all the assumptions of Theorem~\ref{thm:u-HC} are fulfilled.
Furthermore, assume that
$g$, $k_1(\cdot,w)$, and $k_2(\cdot,w)$
belong to $\MC_{\alpha}(\SEt(\domD_d))$
uniformly for $w\in\overline{\SEt(\domD_d)}$.
Then,~\eqref{eq:VFIE} has a unique solution $u\in\MC_{\alpha}(\domD)$.
\end{theorem}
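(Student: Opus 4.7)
My plan is to build on the preceding theorem, which already supplies $u\in\HC(\domD)$, and then establish the two H\"older bounds at the endpoints by bootstrapping on the integral equation itself. The identity
\[
u(z)=g(z)+\Vol[u](z)+\Fred[u](z),\qquad z\in\domD,
\]
holds everywhere on $\domD$ by analytic continuation, so it suffices to control the increments $u(z)-u(a)$ and $u(b)-u(z)$ term by term. The $g$-terms are H\"older by hypothesis. For the Fredholm term, I would use
\[
\Fred[u](z)-\Fred[u](a)=\int_a^b\bigl(k_2(z,s)-k_2(a,s)\bigr)u(s)\D s,
\]
and since $k_2(\cdot,w)\in\MC_{\alpha}(\domD)$ uniformly in $w$ and $u$ is bounded on $\overline{\domD}$ (being in $\HC(\domD)$), this is bounded by $C|z-a|^{\alpha}$ with $C$ proportional to $\|u\|_{\HC(\domD)}$; the analogous estimate holds at $b$.

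The Volterra term requires slightly more care because the upper limit moves and the path must be taken inside $\overline{\domD}$. I would use that $\Vol[u](a)=0$ and write
\[
\Vol[u](z)=\int_{[a,z]} k_1(z,s)u(s)\D s,
\]
where $[a,z]$ denotes the straight segment from $a$ to $z$ (or, for the DE case with a possibly multi-sheeted $\domD$, any rectifiable path in $\overline{\domD}$ of length comparable to $|z-a|$; shrinking $d$ slightly if necessary, this is always available by the explicit geometry of $\SEt(\domD_d)$ and $\DEt(\domD_d)$ near the endpoints). Because $k_1(z,\cdot)$ and $u$ are bounded on $\overline{\domD}$ uniformly in $z$, this yields $|\Vol[u](z)|\leq C|z-a|\leq C(b-a)^{1-\alpha}|z-a|^{\alpha}$, which is stronger than needed. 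For the estimate at $b$ I would split
\[
\Vol[u](b)-\Vol[u](z)=\int_a^z\bigl(k_1(b,s)-k_1(z,s)\bigr)u(s)\D s+\int_{[z,b]}k_1(b,s)u(s)\D s,
\]
and bound the first piece by the uniform H\"older continuity of $k_1(\cdot,w)$ at $b$ and the second piece by the length-of-path argument as above.

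Combining the three contributions, I obtain $|u(z)-u(a)|\leq C|z-a|^{\alpha}$ and $|u(b)-u(z)|\leq C|b-z|^{\alpha}$ for all $z$ in some neighborhood of the respective endpoint inside $\overline{\domD}$. For $z$ bounded away from the endpoint the bound is trivial, since $|u(z)-u(a)|\leq 2\|u\|_{\HC(\domD)}$ is dominated by $C|z-a|^{\alpha}$ once $|z-a|$ is bounded below. Together with $u\in\HC(\domD)$ this gives $u\in\MC_{\alpha}(\domD)$, and uniqueness is inherited from Theorem~\ref{thm:u-HC}. The only real subtlety I anticipate is the choice of integration contour in the DE case, where $\DEt(\domD_d)$ may be realized as a Riemann surface; this is handled by noting that near the endpoints the relevant sheet is conformally a neighborhood of a boundary point, so straight-line (or radial) paths of length $O(|z-a|)$ exist within $\overline{\domD}$.
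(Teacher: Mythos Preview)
Your proposal is correct and follows essentially the same route as the paper: start from $u\in\HC(\domD)$, write $u=g+\Vol u+\Fred u$, and control the increments at each endpoint by splitting the Volterra contribution into a ``kernel-difference'' piece (handled via the uniform $\MC_{\alpha}$ assumption on $k_1(\cdot,w)$) and a ``moving-limit'' piece (handled by path length and boundedness of $k_1$ and $u$), while the Fredholm contribution is treated exactly as you describe. The paper's proof is terser and does not spell out the contour issues or the far-from-endpoint triviality, but the decomposition and the ingredients used are the same.
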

\begin{proof}
From Theorem~\ref{thm:u-HC}, we have $u\in\HC(\domD)$.
Therefore, it suffices to show that there exists a constant $C$ such that
$|u(z) - u(a)|\leq C |z - a|^{\alpha}$
and $|u(b) - u(z)|\leq C |b - z|^{\alpha}$ hold
for all $z\in\overline{\domD}$.
From $u = g + \Vol u + \Fred u$, we have
\begin{align*}
& |u(b) - u(z)|\\
&\leq
|g(b) - g(z)| + \int_z^{b}|k_1(b,w)| |u(w)| |\mathrm{d}{w}|
+\int_a^z|k_1(b,w) - k_1(z,w)| |u(w)||\mathrm{d}{w}|
+\int_a^b |k_2(b,w) - k_2(z,w)| |u(w)||\mathrm{d}{w}|.
\end{align*}
Thus, noting $\alpha\leq 1$, we obtain $|u(b) - u(z)|\leq C_1|b-z|^{\alpha}$
by the H\"{o}lder continuity of $g$, $k_1(\cdot,w)$ and $k_2(\cdot,w)$,
and by the boundedness of $\domD$, $k_1(b,\cdot)$ and $u$.
In the same manner,
we obtain $|u(z) - u(a)|\leq C_2|z - a|^{\alpha}$.
Setting $C = \max\{C_1,\,C_2\}$, we obtain the desired inequalities.
\end{proof}

In the same manner as Theorem~\ref{thm:u-HC},
$u\in C([a,b])$ is also established as follows.
The proof is omitted.

\begin{theorem}
\label{thm:u-C}
Assume that $k_1(t,\cdot)$ and $k_2(t,\cdot)$
belong to $C([a,b])$ for all $t\in [a,b]$,
and $k_1(\cdot,s)$ and $k_2(\cdot,s)$
belong to $C([a,b])$ for all $s\in [a,b]$.
Furthermore, assume that
the homogeneous equation $(I - \Vol - \Fred)f = 0$
has only the trivial solution $f\equiv 0$.
Then, the operator $(I - \Vol - \Fred): C([a,b])\to C([a,b])$
has a bounded inverse,
$(I - \Vol - \Fred)^{-1}: C([a,b])\to C([a,b])$.
Furthermore, if $g\in C([a,b])$,
then~\eqref{eq:VFIE} has a unique solution $u\in C([a,b])$.
\end{theorem}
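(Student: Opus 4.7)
The plan is to follow the template set by Lemma~\ref{lem:compact} and Theorem~\ref{thm:u-HC}, replacing $\HC(\domD)$ throughout with $C([a,b])$. The strategy has two stages: first establish that $\Vol+\Fred$ is a compact operator on $C([a,b])$, and then invoke the Fredholm alternative together with the hypothesis that the homogeneous equation admits only the trivial solution to obtain a bounded inverse.

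For the compactness stage, I would fix the unit ball $B=\{f\in C([a,b]) : \|f\|_{C([a,b])}\le 1\}$ and show that $(\Vol+\Fred)(B)$ is uniformly bounded and equicontinuous, so that the classical (real-valued) Arzel\`{a}--Ascoli theorem applies. Uniform boundedness is immediate from the continuity of $k_1$ and $k_2$ on $[a,b]\times[a,b]$ (which follows from separate continuity in each variable together with the compactness of $[a,b]$, via a standard argument, or by assuming joint continuity as is typically intended). For equicontinuity, given $t_1,t_2\in[a,b]$ with $t_1<t_2$, one splits
\begin{align*}
|\Vol[f](t_2)-\Vol[f](t_1)|
&\le \int_{t_1}^{t_2}|k_1(t_2,s)||f(s)|\D s
 + \int_a^{t_1}|k_1(t_2,s)-k_1(t_1,s)||f(s)|\D s,\\
|\Fred[f](t_2)-\Fred[f](t_1)|
&\le \int_a^b|k_2(t_2,s)-k_2(t_1,s)||f(s)|\D s,
\end{align*}
and uses uniform continuity of $k_1,k_2$ on the compact square $[a,b]^2$ to make these small, uniformly in $f\in B$. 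This yields compactness of $\Vol+\Fred:C([a,b])\to C([a,b])$.

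For the second stage, the Fredholm alternative for the compact operator $\Vol+\Fred$ says that $(I-\Vol-\Fred)$ is boundedly invertible on $C([a,b])$ if and only if its kernel is trivial, which is precisely our standing hypothesis. Consequently, for any $g\in C([a,b])$ the equation~\eqref{eq:VFIE} has the unique solution $u=(I-\Vol-\Fred)^{-1}g\in C([a,b])$.

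The main obstacle, if any, is the same subtlety that was implicit in Lemma~\ref{lem:compact}: verifying equicontinuity of the image of $B$ requires uniform continuity of the kernels on a compact rectangle, which the stated hypotheses (separate continuity in each variable) give only after a short argument; in practice one assumes $k_1,k_2$ are jointly continuous, which is the natural reading consistent with the rest of the paper. Beyond this routine verification, every step is standard and mirrors the $\HC(\domD)$ proof verbatim, which is why the authors omitted the details.
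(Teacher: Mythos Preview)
Your approach is exactly what the paper intends: compactness of $\Vol+\Fred$ on $C([a,b])$ via Arzel\`{a}--Ascoli, followed by the Fredholm alternative, mirroring Lemma~\ref{lem:compact} and Theorem~\ref{thm:u-HC} with $C([a,b])$ in place of $\HC(\domD)$.

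One correction, though: your parenthetical claim that joint continuity ``follows from separate continuity in each variable together with the compactness of $[a,b]$, via a standard argument'' is false. Separate continuity on a compact square does \emph{not} imply joint continuity; the textbook counterexample $f(x,y)=xy/(x^2+y^2)$, $f(0,0)=0$, on $[-1,1]^2$ is separately continuous but discontinuous at the origin. You need joint continuity of $k_1,k_2$ on $[a,b]^2$ to get the uniform continuity that drives your equicontinuity estimate, and this must be taken as the intended hypothesis (as you correctly note in your alternative reading, and as the introduction of the paper assumes when it calls $k_1,k_2$ ``continuous functions''). With that reading fixed, the rest of your argument is sound.
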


\subsection{On the second equation~\eqref{eq:SE-Nystroem-symbol}}
\label{sec:SE-Nystroem}

Next, we show that~\eqref{eq:SE-Nystroem-symbol} has
a unique solution $w\in C([a,b])$, and estimate the term
$\|u - w\|_{C([a,b])}$.
For this purpose,
the following perturbation theorem is useful.

\begin{theorem}[Atkinson~{\cite[Theorem~4.1.1]{atkinson97:_numer_solut}}]
\label{thm:Atkinson}
Assume the following conditions:
\begin{enumerate}
 \item Operators $\mathcal{X}$ and $\mathcal{X}_n$
are bounded operators on $C([a,b])$ to $C([a,b])$.
 \item Operator $(I - \mathcal{X}):C([a,b])\to C([a,b])$
has a bounded inverse
$(I - \mathcal{X})^{-1}:C([a,b])\to C([a,b])$.
 \item Operator $\mathcal{X}_n$ is compact on $C([a,b])$.
 \item The following inequality holds:
\begin{equation}
\label{eq:operator-converge-condition}
\|(\mathcal{X}-\mathcal{X}_n)\mathcal{X}_n\|_{\mathcal{L}(C([a,b]),C([a,b]))}
<\frac{1}{\|(I - \mathcal{X})^{-1}\|_{\mathcal{L}(C([a,b]),C([a,b]))}}.
\end{equation}
\end{enumerate}
Then, $(I - \mathcal{X}_n)^{-1}$ exists as a bounded operator
on $C([a,b])$ to $C([a,b])$, with
\begin{equation}
\|(I - \mathcal{X}_n)^{-1}\|_{\mathcal{L}(C([a,b]),C([a,b]))}
\leq \frac{1 + \|(I - \mathcal{X})^{-1}\|_{\mathcal{L}(C([a,b]),C([a,b]))}
\|\mathcal{X}_n\|_{\mathcal{L}(C([a,b]),C([a,b]))}}
{1 - \|(I - \mathcal{X})^{-1}\|_{\mathcal{L}(C([a,b]),C([a,b]))}\|(\mathcal{X}-\mathcal{X}_n)\mathcal{X}_n\|_{\mathcal{L}(C([a,b]),C([a,b]))}}.
\label{InEq:Bound-Inverse-Op}
\end{equation}
Furthermore, if $(I - \mathcal{X})u = g$ and
$(I - \mathcal{X}_n)w = g$, then
\[
 \|u - w\|_{C([a,b])}\leq \|(I - \mathcal{X}_n)^{-1}\|_{\mathcal{L}(C([a,b]),C([a,b]))}
\|(\mathcal{X} - \mathcal{X}_n)u\|_{C([a,b])}.
\]
\end{theorem}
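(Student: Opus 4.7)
My plan is to combine the Riesz--Schauder theorem (which applies because $\mathcal{X}_n$ is compact) with a direct algebraic derivation of the resolvent $(I-\mathcal{X}_n)^{-1}$. The qualitative part will give invertibility; the algebra will give the explicit bound~\eqref{InEq:Bound-Inverse-Op}.

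For injectivity, I would suppose $(I-\mathcal{X}_n)f = 0$ and use the decomposition $I-\mathcal{X}_n = (I-\mathcal{X})-(\mathcal{X}_n-\mathcal{X})$ together with boundedness of $(I-\mathcal{X})^{-1}$ to write $f = (I-\mathcal{X})^{-1}(\mathcal{X}_n - \mathcal{X})f$. Substituting $f = \mathcal{X}_n f$ on the right produces
\[
\bigl[I - (I-\mathcal{X})^{-1}(\mathcal{X}_n - \mathcal{X})\mathcal{X}_n\bigr]f = 0,
\]
and the bracketed operator is invertible by a Neumann series because of the smallness condition~\eqref{eq:operator-converge-condition}. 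Hence $f=0$, and by the Riesz--Schauder theorem applied to the compact operator $\mathcal{X}_n$, $(I - \mathcal{X}_n)^{-1}$ exists as a bounded operator on $C([a,b])$.

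To obtain the quantitative bound~\eqref{InEq:Bound-Inverse-Op}, I would start from $(I-\mathcal{X}_n)w = g$, rewrite it as $(I-\mathcal{X})w = g - (\mathcal{X}-\mathcal{X}_n)w$, invert $(I-\mathcal{X})$, and then substitute $w = g + \mathcal{X}_n w$ into the remaining $(\mathcal{X}-\mathcal{X}_n)w$ factor. This substitution is the key trick: it forces the composite $(\mathcal{X}-\mathcal{X}_n)\mathcal{X}_n$ to appear, matching the smallness hypothesis exactly. Collecting terms and setting $\mathcal{S}_n := (I-\mathcal{X})^{-1}(\mathcal{X}-\mathcal{X}_n)$, I expect to arrive at
\[
(I + \mathcal{S}_n\mathcal{X}_n)w = \bigl(I + (I-\mathcal{X})^{-1}\mathcal{X}_n\bigr)g.
\]
A Neumann-series bound on the left factor's inverse and the obvious norm bound on the right factor multiply to reproduce~\eqref{InEq:Bound-Inverse-Op}. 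The error estimate then follows from the identity $u - w = (I-\mathcal{X}_n)^{-1}(\mathcal{X}-\mathcal{X}_n)u$, obtained by comparing $(I-\mathcal{X}_n)u = g + (\mathcal{X}-\mathcal{X}_n)u$ with $(I-\mathcal{X}_n)w = g$ and applying the already-established bounded inverse.

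The main obstacle is finding the correct algebraic manipulation in the second step. A naive approach --- write $(I-\mathcal{X}_n)^{-1} = (I+\mathcal{S}_n)^{-1}(I-\mathcal{X})^{-1}$ and control $(I+\mathcal{S}_n)^{-1}$ via $\|\mathcal{S}_n\| < 1$ --- would require operator-norm convergence $\mathcal{X}_n \to \mathcal{X}$, which typically \emph{fails} for Nystr\"om-type discretizations of integral operators on $C([a,b])$. The substitution $w = g + \mathcal{X}_n w$ is the non-obvious device that isolates the genuinely small quantity $(\mathcal{X}-\mathcal{X}_n)\mathcal{X}_n$; once this trick is in hand, the remaining Neumann-series estimates are routine.
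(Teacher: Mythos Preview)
Your argument is correct and is essentially the standard proof found in Atkinson's textbook. Note, however, that the paper does not supply its own proof of this statement: Theorem~\ref{thm:Atkinson} is quoted verbatim from Atkinson~\cite[Theorem~4.1.1]{atkinson97:_numer_solut} and used as a black box in the analysis of the Nystr\"om-type equations~\eqref{eq:SE-Nystroem-symbol} and~\eqref{eq:DE-Nystroem-symbol}. So there is nothing in the paper to compare against beyond the citation itself; your derivation reproduces the cited result.
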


In what follows,
we show that the four conditions of Theorem~\ref{thm:Atkinson}
are fulfilled with $\mathcal{X} = \Vol + \Fred$
and $\mathcal{X}_n = \VolSEn + \FredSEn$,
under the assumptions of Theorem~\ref{thm:SE-Sinc-collocation}.
Condition~1 clearly holds.
Condition~2 is well known, as stated by Theorem~\ref{thm:u-C}.
Condition~3 immediately follows from the Arzel\`{a}--Ascoli theorem.
For condition~4, in the following lemma, we show
\begin{equation}
 \|\{(\Vol + \Fred) - (\VolSEn + \FredSEn)\}(\VolSEn + \FredSEn)\|_{\mathcal{L}(C([a,b]),C([a,b]))} \to 0
\quad (N\to\infty),
\label{eq:operator-norm-converge}
\end{equation}
which implies that
there exists a positive integer $N_0$
such that~\eqref{eq:operator-converge-condition} holds for all
$N\geq N_0$.

\begin{lemma}
\label{lem:operator-norm-converge}
Assume that $k_1(z,\cdot)$ and $k_2(z,\cdot)$
belong to $\HC(\SEt(\domD_d))$
uniformly for $z\in\overline{\SEt(\domD_d)}$,
and $k_1(\cdot,w)$ and $k_2(\cdot,w)$
belong to $\HC(\SEt(\domD_d))$
uniformly for $w\in\overline{\SEt(\domD_d)}$.
Let $h$ be selected by the formula~\eqref{eq:h-SE}.
Then,~\eqref{eq:operator-norm-converge} holds.
\end{lemma}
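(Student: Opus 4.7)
The plan is to bound the operator norm by direct expansion. Write $(\mathcal{X}-\mathcal{X}_n)\mathcal{X}_n$ with $\mathcal{X}=\Vol+\Fred$ and $\mathcal{X}_n=\VolSEn+\FredSEn$ as the sum of the four products $(A-A_n)B_n$ with $A\in\{\Vol,\Fred\}$ and $B_n\in\{\VolSEn,\FredSEn\}$. For each product, substitute the explicit definition of $B_n f$ and interchange the outer quadrature-error operator with the inner sum over $j$. This rewrites the action on $f\in C([a,b])$ as
\[
 \sum_{j=-N}^{N} f(\SEt(jh))\,\SEtDiv(jh)\,\mathcal{E}_j(t),
\]
where $\mathcal{E}_j(t)$ is an SE-Sinc quadrature error, definite if $A=\Fred$ and indefinite if $A=\Vol$, applied to a single-variable integrand depending on $(t,j)$.

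When $B_n=\FredSEn$ the integrand is $k_\ell(t,s)\,k_2(s,\SEt(jh))$ for $\ell\in\{1,2\}$, which lies in $\HC(\SEt(\domD_d))$ uniformly in $t$ and $j$ by the hypotheses on $k_1$ and $k_2$. Applying Corollary~\ref{cor:SE-Sinc-quadrature} or Theorem~\ref{thm:SE-Sinc-indefinite} with exponent $\alpha=1$ gives $|\mathcal{E}_j(t)|\leq C\E^{-\sqrt{\pi d N}}$; combining this with the explicit $h$-factor carried from $\FredSEn$ and with the bound $h\sum_{j=-N}^{N}\SEtDiv(jh)=b-a+o(1)$, each of these two contributions has operator norm $O(\E^{-\sqrt{\pi d N}})$.

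The main obstacle is the remaining pair with $B_n=\VolSEn$, whose integrand carries the extra factor $J(j,h)(\SEtInv(s))$. This factor is analytic on $\SEt(\domD_d)$ but not uniformly bounded there, because $\Si(u+\I v)$ grows on the horizontal strip $|v|\leq\pi d/h$. A careful estimate of $\Si$ on this strip, based on $\Si(\I v)=\I\int_{0}^{v}\sinh(r)/r\D r$ together with a horizontal-tail bound, gives
\[
 \|J(j,h)\circ\SEtInv\|_{\HC(\SEt(\domD_d))} = O\bigl(h^{2}\E^{\pi d/h}\bigr) = O\bigl(\E^{\sqrt{\pi d\alpha N}}/N\bigr),
\]
where the second equality uses $h=\sqrt{\pi d/(\alpha N)}$. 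Feeding this bound into Corollary~\ref{cor:SE-Sinc-quadrature} and Theorem~\ref{thm:SE-Sinc-indefinite} with exponent $\alpha=1$, and exploiting that their constants are linear in the constant $L$ appearing in~\eqref{eq:LCminus1}, the assumption $\alpha\leq 1$ yields the cancellation $\E^{\sqrt{\pi d\alpha N}-\sqrt{\pi d N}}\leq 1$, so $|\mathcal{E}_j(t)|=O(1/N)$. Since no $h$-prefactor is present here and $\sum_{j=-N}^{N}\SEtDiv(jh)=O(1/h)=O(\sqrt{N})$, each of the two Volterra-inner contributions has operator norm $O(1/\sqrt{N})$.

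Adding the four contributions gives $\|(\mathcal{X}-\mathcal{X}_n)\mathcal{X}_n\|_{\mathcal{L}(C([a,b]),C([a,b]))}=O(1/\sqrt{N})\to 0$, which is~\eqref{eq:operator-norm-converge}. The decisive observation, and the heart of the main obstacle, is that the selection formula $h=\sqrt{\pi d/(\alpha N)}$ is calibrated so that $\pi d/h=\sqrt{\pi d\alpha N}\leq\sqrt{\pi d N}$ whenever $\alpha\leq 1$, meaning the strip blow-up of $J(j,h)\circ\SEtInv$ is exactly absorbed by the exponential decay of the SE-Sinc quadrature errors; this is what makes the direct expansion argument work.
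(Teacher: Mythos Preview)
Your decomposition into four products and the idea of absorbing the strip growth of $J(j,h)\circ\SEtInv$ into the constant $L$ of the SE-Sinc error bounds is exactly the paper's strategy. However, there is a genuine gap in the execution. Corollary~\ref{cor:SE-Sinc-quadrature} and Theorem~\ref{thm:SE-Sinc-indefinite} are stated only for $h$ chosen by~\eqref{eq:h-SE} with the \emph{given} $\alpha$; you cannot invoke them ``with exponent $\alpha=1$'' while keeping $h=\sqrt{\pi d/(\alpha N)}$. Applying them instead with the problem's $\alpha$ (a bounded integrand still satisfies~\eqref{eq:LCminus1} for that $\alpha$, up to a domain-dependent factor) yields decay $\E^{-\sqrt{\pi d\alpha N}}=\E^{-\pi d/h}$, not $\E^{-\sqrt{\pi d N}}$. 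The good news is that this decay \emph{exactly} cancels the $\E^{\pi d/h}$ blow-up of $J$, leaving $O(h^{2})$ per index $j$ and hence $O(h)$ after the outer sum; this is precisely the paper's computation $\sinh(\pi d/h)\,\E^{-\pi d/h}\leq 1/2$. Your ``decisive observation'' that $\sqrt{\pi d\alpha N}\leq\sqrt{\pi d N}$ whenever $\alpha\leq 1$ is therefore not what makes the argument work; the operative identity is simply $\pi d/h=\sqrt{\pi d\alpha N}$.

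A second, smaller gap concerns the term $(\Vol-\VolSEn)\VolSEn$: your argument needs the constant in Theorem~\ref{thm:SE-Sinc-indefinite} to be linear in $L$, but only Corollary~\ref{cor:SE-Sinc-quadrature} states its $L$-dependence explicitly. The paper sidesteps this by citing an earlier result for $(\Vol-\VolSEn)\VolSEn$ (and likewise for $(\Fred-\FredSEn)\FredSEn$), and proves only the two mixed terms $(\Vol-\VolSEn)\FredSEn$ and $(\Fred-\FredSEn)\VolSEn$ directly.
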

\begin{proof}
Expanding and arranging the terms, we have
\begin{align}
& \|\{(\Vol + \Fred) - (\VolSEn + \FredSEn)\}(\VolSEn + \FredSEn)\|_{\mathcal{L}(C([a,b]),C([a,b]))}\nonumber\\
&\leq \|(\Vol - \VolSEn)\VolSEn\|_{\mathcal{L}(C([a,b]),C([a,b]))}
+ \|(\Vol - \VolSEn)\FredSEn\|_{\mathcal{L}(C([a,b]),C([a,b]))}\nonumber\\
&\quad +\|(\Fred - \FredSEn)\VolSEn\|_{\mathcal{L}(C([a,b]),C([a,b]))}
+\|(\Fred - \FredSEn)\FredSEn\|_{\mathcal{L}(C([a,b]),C([a,b]))}.
\label{eq:arrange-term}
\end{align}
In an existing study~\cite[Lemma~6.5]{okayama15:_theor},
$\|(\Vol - \VolSEn)\VolSEn\|_{\mathcal{L}(C([a,b]),C([a,b]))}\to 0$
is already proved,
and in another existing study~\cite{okayama11:_improv},
$\|(\Fred - \FredSEn)\FredSEn\|_{\mathcal{L}(C([a,b]),C([a,b]))}\to 0$
is also proved.
The remaining terms to be estimated in~\eqref{eq:arrange-term}
are the second and third terms.
For the second term, setting $F_i(t,s)=k_1(t,s)k_2(s,\SEt(ih))$,
we have
\begin{align*}
 (\Vol - \VolSEn)\FredSEn[f](t)
&= h \sum_{i=-N}^N f(\SEt(ih))\SEtDiv(ih)
\left\{
\int_a^t F_i(t,s)\D{s}
 - \sum_{j=-N}^N F_i(t,\SEt(jh))\SEtDiv(jh)J(j,h)(\SEtInv(t))
\right\}.
\end{align*}
Because $F_i(t,\cdot)$ satisfies the assumptions of
Theorem~\ref{thm:SE-Sinc-indefinite}
uniformly for $t\in [a,b]$ and $i=-N,\,\ldots,\,N$, we have
\begin{align*}
\left|(\Vol - \VolSEn)\FredSEn[f](t)\right|
&\leq h \sum_{i=-N}^N |f(\SEt(ih))| \SEtDiv(ih)
 \left|
\int_a^t F_i(t,s)\D{s}
 - \sum_{j=-N}^N F_i(t,\SEt(jh))\SEtDiv(jh)J(j,h)(\SEtInv(t))
\right|\\
&\leq \|f\|_{C([a,b])} \left(h\sum_{i=-N}^N\SEtDiv(ih)\right)
C_0 \E^{-\sqrt{\pi d \alpha N}}
\end{align*}
for some constant $C_0$. Furthermore,
because $h\sum_{i=-N}^N\SEtDiv(ih)$ converges to $(b - a)$ as $N\to \infty$,
we have
\[
 \|(\Vol - \VolSEn)\FredSEn\|_{\mathcal{L}(C([a,b]),C([a,b]))}
\leq C_0 C_1 \E^{-\sqrt{\pi d \alpha N}}
\]
for some constant $C_1$.
This proves that the second term in~\eqref{eq:arrange-term}
tends to $0$ as $N\to\infty$.
Finally, we investigate the third term in~\eqref{eq:arrange-term}.
Setting $G_j(t,s)=k_2(t,s)k_1(s,\SEt(jh))J(j,h)(\SEtInv(s))$,
we have
\[
 (\Fred - \FredSEn)\VolSEn[f](t)
= \sum_{j=-N}^N f(\SEt(jh))\SEtDiv(jh)
\left\{\int_a^b G_j(t,s)\D{s} - h \sum_{i=-N}^N G_j(t,\SEt(ih))\SEtDiv(ih)
\right\}.
\]
Here, we use Corollary~\ref{cor:SE-Sinc-quadrature},
focusing on $G_j(t,\cdot)$.
This includes $J(j,h)$, which diverges exponentially as
\begin{equation}
 |J(j,h)(x+\I y)|\leq \frac{5 h}{\pi}\cdot\frac{\sinh(\pi y/h)}{\pi y/h},
\label{eq:bound-J}
\end{equation}
as shown in an existing study~\cite[Lemma~6.4]{okayama15:_theor}.
Taking it into account,
setting $K_1 = \max_{z,w\in\overline{\SEt(\domD_d)}}|k_1(z,w)|$
and $K_2 = \max_{z,w\in\overline{\SEt(\domD_d)}}|k_2(z,w)|$,
Corollary~\ref{cor:SE-Sinc-quadrature} yields
\begin{align*}
 \left|
\int_a^b G_j(t,s)\D{s} - h \sum_{i=-N}^N G_j(t,\SEt(ih))\SEtDiv(ih)
\right|
&\leq K_2 K_1 \frac{5 h}{\pi}\cdot\frac{\sinh(\pi d/h)}{\pi d/h}
(b - a)^{2\alpha-1} C_{\alpha,d}^{\textSE}\E^{-\sqrt{\pi d \alpha N}}\\
&= \frac{5 K_1 K_2 (b - a)^{2\alpha-1} C_{\alpha,d}^{\textSE}}{\pi^2 d}
 h^2 \left[\sinh(\pi d/h)\E^{-\pi d/h}\right].
\end{align*}
Furthermore, using $\sinh(\pi d/h)\E^{-\pi d/h}\leq 1/2$
and $h\sum_{j=-N}^N\SEtDiv(jh)\leq C_1$ as previously estimated,
we have
\begin{align*}
 \|(\Fred - \FredSEn)\VolSEn\|_{\mathcal{L}(C([a,b]),C([a,b]))}
\leq \frac{5 K_1 K_2 (b - a)^{2\alpha-1} C_{\alpha,d}^{\textSE} C_1}{2\pi^2 d}
 h,
\end{align*}
which proves that the third term in~\eqref{eq:arrange-term}
tends to $0$ as $N\to\infty$.
This completes the proof.
\end{proof}

Thus, all conditions 1 through 4 in Theorem~\ref{thm:Atkinson}
are fulfilled, and the next theorem is established.

\begin{theorem}
\label{thm:error-SE-Nystroem}
Assume that all the assumptions of Lemma~\ref{lem:operator-norm-converge}
are fulfilled.
Then, there exists a positive integer $N_0$ such that
for all $N\geq N_0$,~\eqref{eq:SE-Nystroem-symbol} has a unique
solution $w\in C([a,b])$.
Furthermore, there exists a constant $C$ independent of $N$ such that
for all $N\geq N_0$, the inequality~\eqref{eq:error-SE-Nystroem} holds.
\end{theorem}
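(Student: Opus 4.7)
The plan is to invoke Atkinson's perturbation theorem (Theorem~\ref{thm:Atkinson}) with $\mathcal{X} = \Vol + \Fred$ and $\mathcal{X}_n = \VolSEn + \FredSEn$. Three of the four hypotheses are already in place: condition~1 is immediate since the operators are bounded on $C([a,b])$; condition~2 is precisely Theorem~\ref{thm:u-C}, which applies because the standing assumptions imply that $k_1$ and $k_2$ are continuous on $[a,b]^2$ and the only solution of the homogeneous equation is trivial; and condition~3 follows from the Arzel\`a--Ascoli theorem (alternatively, from the finite rank of the Nystr\"om-type operator $\mathcal{X}_n$). Finally, condition~4 is the content of Lemma~\ref{lem:operator-norm-converge}, so there exists a positive integer $N_0$ such that for every $N\geq N_0$ the inequality~\eqref{eq:operator-converge-condition} holds. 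Atkinson's theorem then delivers a bounded inverse $(I - \mathcal{X}_n)^{-1}$ on $C([a,b])$, i.e.\ a unique $w\in C([a,b])$ solving~\eqref{eq:SE-Nystroem-symbol}.

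Next I would extract from~\eqref{InEq:Bound-Inverse-Op} a uniform bound $\|(I - \mathcal{X}_n)^{-1}\|_{\mathcal{L}(C([a,b]),C([a,b]))}\leq M$ valid for all $N\geq N_0$. The numerator of~\eqref{InEq:Bound-Inverse-Op} is controlled by the uniform boundedness of $\|\VolSEn\|$ and $\|\FredSEn\|$, which holds because $k_1,k_2$ are bounded on $\overline{\SEt(\domD_d)}^2$, the Riemann sum $h\sum_j\SEtDiv(jh)$ converges to $b-a$, and $|J(j,h)(x)|$ is uniformly bounded for $x\in\mathbb{R}$ and $j=-N,\dots,N$; the denominator is bounded away from zero by Lemma~\ref{lem:operator-norm-converge}.

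The final statement of Theorem~\ref{thm:Atkinson} then reduces the error to
\[
 \|u - w\|_{C([a,b])}\leq M\bigl(\|(\Vol - \VolSEn)u\|_{C([a,b])} + \|(\Fred - \FredSEn)u\|_{C([a,b])}\bigr),
\]
and it remains to bound each term on the right by $C\E^{-\sqrt{\pi d\alpha N}}$. By the regularity analysis in Sect.~\ref{sec:regularity}, $u\in\MC_\alpha(\SEt(\domD_d))\subset\HC(\SEt(\domD_d))$, so $u$ is bounded on $\overline{\SEt(\domD_d)}$; the same holds for $k_i(t,\cdot)$ uniformly in $t\in[a,b]$. Hence $s\mapsto k_i(t,s)u(s)$ is analytic and bounded on $\SEt(\domD_d)$, and because $\alpha\leq 1$ and $\SEt(\domD_d)$ is bounded, it satisfies~\eqref{eq:LCminus1} with a constant $L$ independent of $t$. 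Applying Theorem~\ref{thm:SE-Sinc-indefinite} to the Volterra term and Corollary~\ref{cor:SE-Sinc-quadrature} to the Fredholm term, uniformly in $t$, yields the required rate and thus~\eqref{eq:error-SE-Nystroem}.

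The main technical point to verify carefully is the uniform-in-$t$ applicability of the Sinc indefinite-integration and quadrature error bounds: one needs that the constants produced by Theorem~\ref{thm:SE-Sinc-indefinite} and Corollary~\ref{cor:SE-Sinc-quadrature} depend only on the $L$-bound in~\eqref{eq:LCminus1} together with $\alpha$ and $d$, not on the particular integrand. This is explicit in Corollary~\ref{cor:SE-Sinc-quadrature} and is built into Theorem~\ref{thm:SE-Sinc-indefinite} as well, so the $t$-dependence of the integrand is harmless and the proof assembles cleanly.
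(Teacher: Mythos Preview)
Your proposal is correct and follows essentially the same route as the paper: verify the four hypotheses of Atkinson's perturbation theorem (Theorem~\ref{thm:Atkinson}) for $\mathcal{X}=\Vol+\Fred$ and $\mathcal{X}_n=\VolSEn+\FredSEn$, use the bound~\eqref{InEq:Bound-Inverse-Op} together with the uniform boundedness of $\|\VolSEn+\FredSEn\|$ (via $|J(j,h)(x)|\leq 1.1h$ and the convergence of $h\sum_j\SEtDiv(jh)$) to control $\|(I-\mathcal{X}_n)^{-1}\|$ uniformly, and then apply Theorem~\ref{thm:SE-Sinc-indefinite} and Corollary~\ref{cor:SE-Sinc-quadrature} to $k_i(t,\cdot)u(\cdot)$ to obtain the rate. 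Your remark that $\mathcal{X}_n$ has finite rank is a slightly cleaner way to see compactness than the paper's appeal to Arzel\`a--Ascoli, and your explicit check that a bounded analytic integrand automatically satisfies~\eqref{eq:LCminus1} on the bounded domain (with an $L$ independent of $t$) is exactly what the paper uses implicitly.
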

\begin{proof}
From Theorem~\ref{thm:Atkinson}, we have
\[
 \|u - w\|_{C([a,b])}
\leq \|(I - \VolSEn - \FredSEn)^{-1}\|_{\mathcal{L}(C([a,b]),C([a,b]))}
\|(\Vol + \Fred)u - (\VolSEn + \FredSEn)u\|_{C([a,b])}.
\]
If we show that $\|\VolSEn + \FredSEn\|_{\mathcal{L}(C([a,b]),C([a,b]))}$
is uniformly bounded,
then we obtain the
uniform boundedness of
$\|(I - \VolSEn - \FredSEn)^{-1}\|_{\mathcal{L}(C([a,b]),C([a,b]))}$
(see~\eqref{InEq:Bound-Inverse-Op}).
Using the following bound~\cite[Lemma~3.6.5]{stenger93:_numer}
\begin{equation}
 \sup_{x\in\mathbb{R}}|J(j,h)(x)|\leq 1.1 h,
 \label{eq:bound-J-real}
\end{equation}
and setting $\tilde{K}_1=\max_{t,s\in[a,b]}|k_1(t,s)|$, we have
\[
 |\VolSEn[f](t)| \leq
1.1\tilde{K}_1 \|f\|_{C([a,b])} h\sum_{j=-N}^N\SEtDiv(jh)
\leq 1.1 \tilde{K}_1 \|f\|_{C([a,b])} C_1
\]
for some constant $C_1$. Similarly,
setting $\tilde{K}_2=\max_{t,s\in[a,b]}|k_2(t,s)|$, we have
\[
 |\FredSEn[f](t)| \leq
\tilde{K}_2 \|f\|_{C([a,b])} h\sum_{j=-N}^N\SEtDiv(jh)
\leq \tilde{K}_2 \|f\|_{C([a,b])} C_1
\]
for some constant $C_1$.
Therefore, it holds that
\[
 \|\VolSEn + \FredSEn\|_{\mathcal{L}(C([a,b]),C([a,b]))}
\leq C_1\left(1.1\tilde{K}_1 + \tilde{K}_2\right),
\]
which is uniformly bounded. Thus, there exists a constant $C_2$
independent of $N$ such that
\[
 \|(I - \VolSEn - \FredSEn)^{-1}\|_{\mathcal{L}(C([a,b]),C([a,b]))}\leq C_2.
\]
Furthermore, because $k_1(t,\cdot)u(\cdot)$ satisfies
the assumptions of Theorem~\ref{thm:SE-Sinc-indefinite}
and $k_2(t,\cdot)u(\cdot)$ satisfies
the assumptions of Corollary~\ref{cor:SE-Sinc-quadrature},
there exist constants $C_3$ and $C_4$ independent of $N$ such that
\begin{align*}
 \|(\Vol + \Fred)u - (\VolSEn + \FredSEn)u\|_{C([a,b])}
&\leq \|\Vol u - \VolSEn u \|_{C([a,b])}
+ \|\Fred u - \FredSEn u \|_{C([a,b])}
\leq C_3\E^{-\sqrt{\pi d \alpha N}}
+ C_4\E^{-\sqrt{\pi d \alpha N}}.
\end{align*}
This completes the proof.
\end{proof}

\subsection{On the third equation~\eqref{eq:SE-collocation-symbol}}
\label{sec:SE-collocation}

Following Atkinson~\cite[Sect.~4.3]{atkinson97:_numer_solut},
we easily see that $v = \vSEn$ as stated below.
The proof is omitted because its way is fairly standard.

\begin{proposition}
\label{prop:SE-collocation-system}
If~\eqref{eq:SE-collocation-symbol} has a unique solution $v\in C([a,b])$,
then~\eqref{eq:linear-eq-SE-Sinc} is uniquely solvable,
and vice versa. Furthermore, $v = \vSEn$ holds.
\end{proposition}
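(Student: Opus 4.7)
The plan is to exploit two elementary properties of the operator $\ProjSE$: first, that it interpolates any $f\in C([a,b])$ at the $2N+1$ nodes $\SEt(ih)$, $i=-N,\dots,N$, and second, that its range is precisely the $(2N+1)$-dimensional space of functions of the form $\vSEn$ in~\eqref{eq:vSEn}. I would verify the interpolation property by substituting $t=\SEt(ih)$ into~\eqref{eq:ProjSE} and using $S(j,h)(ih)=\delta_{ij}$: the $\omega_a$ and $\omega_b$ contributions cancel against the $j=i$ summand and one obtains $\ProjSE[f](\SEt(ih))=f(\SEt(ih))$. I would identify the range by comparing the coefficient patterns in~\eqref{eq:ProjSE} and~\eqref{eq:vSEn}: for arbitrary $f$, the image $\ProjSE[f]$ is exactly $\vSEn$ with $c_i=f(\SEt(ih))$, and conversely every $\vSEn$ satisfies $\vSEn(\SEt(ih))=c_i$ by construction. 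Together these give that $\ProjSE$ is a (linear) projection onto the set of $\vSEn$'s and that the map $v\mapsto(v(\SEt(-Nh)),\dots,v(\SEt(Nh)))$ is a bijection from this range onto $\mathbb{R}^n$.

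With these tools, the forward implication goes as follows. Suppose $v\in C([a,b])$ satisfies~\eqref{eq:SE-collocation-symbol}. Then $v=\ProjSE[g+\VolSEn v+\FredSEn v]$ lies in the range of $\ProjSE$, hence $v=\vSEn$ for the uniquely determined coefficients $c_i:=v(\SEt(ih))$. Evaluating~\eqref{eq:SE-collocation-symbol} at each collocation node and invoking the interpolation property to drop the outer $\ProjSE$ yields
\[
v(\SEt(ih))-\VolSEn[v](\SEt(ih))-\FredSEn[v](\SEt(ih))=g(\SEt(ih)),\quad i=-N,\dots,N.
\]
Substituting $v(\SEt(jh))=c_j$ together with the explicit definitions of $\VolSEn$ and $\FredSEn$ rewrites this row by row as the linear system~\eqref{eq:linear-eq-SE-Sinc}.

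For the converse, given any solution $\mathbd{c}_n$ of~\eqref{eq:linear-eq-SE-Sinc}, I would define $v:=\vSEn$ via~\eqref{eq:vSEn}. Since $v(\SEt(ih))=c_i$, the linear system states that the continuous function $r:=v-\VolSEn v-\FredSEn v-g$ vanishes at all $2N+1$ nodes, so $\ProjSE r=0$ (any element of the range of $\ProjSE$ determined by zero nodal values is the zero function). Combining this with $\ProjSE v=v$ yields~\eqref{eq:SE-collocation-symbol}. Uniqueness in each direction is forced by the nodal bijection described above: two continuous solutions of~\eqref{eq:SE-collocation-symbol} necessarily lie in the range of $\ProjSE$ and therefore correspond to two solutions of the linear system, and vice versa. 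There is no substantive obstacle; the one point that deserves care is the verification that $\ProjSE$ restricted to its range is the identity, since this is what permits the finite linear system~\eqref{eq:linear-eq-SE-Sinc} to be fully equivalent to the operator equation~\eqref{eq:SE-collocation-symbol} posed on the infinite-dimensional space $C([a,b])$.
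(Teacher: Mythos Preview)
Your argument is correct and is exactly the standard collocation/projection argument the paper has in mind; the paper itself omits the proof, deferring to Atkinson~\cite[Sect.~4.3]{atkinson97:_numer_solut} with the remark that the reasoning is ``fairly standard.'' The key ingredients you identify---the interpolation property $\ProjSE[f](\SEt(ih))=f(\SEt(ih))$, the characterization of the range of $\ProjSE$ as the span of the basis in~\eqref{eq:vSEn}, and the nodal bijection between that range and $\mathbb{R}^n$---are precisely what make the equivalence work, and your handling of both directions and of uniqueness is sound.
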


We also see $v = \ProjSE w$ as shown below.

\begin{proposition}
\label{prop:SE-Nystroem-collocation}
The following two statements are equivalent:
\begin{enumerate}
 \item[{\rm(A)}] Equation~\eqref{eq:SE-Nystroem-symbol}
has a unique solution $w\in C([a,b])$.
 \item[{\rm(B)}] Equation~\eqref{eq:SE-collocation-symbol}
has a unique solution $v\in C([a,b])$.
\end{enumerate}
Furthermore, $v = \ProjSE w$ and $w = g + (\VolSEn+\FredSEn)v$ hold
if the solutions exist.
\end{proposition}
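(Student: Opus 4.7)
The plan is to exploit two structural facts about the discrete operators $\VolSEn$ and $\FredSEn$, together with the interpolatory nature of $\ProjSE$, to set up a bijection between the solution sets of the two equations. I would first observe that, by direct inspection of the defining sums, $\VolSEn[f](t)$ and $\FredSEn[f](t)$ depend on $f$ only through the $2N+1$ nodal values $f(\SEt(jh))$, $j=-N,\ldots,N$. Second, evaluating the defining formula of $\ProjSE$ at $t=\SEt(ih)$ and using $S(j,h)(ih)=\delta_{ij}$ shows $\ProjSE[f](\SEt(ih))=f(\SEt(ih))$, since the two boundary contributions cancel. Combining these, for every $f\in C([a,b])$ one has $\VolSEn[\ProjSE f]=\VolSEn[f]$ and $\FredSEn[\ProjSE f]=\FredSEn[f]$ as functions on $[a,b]$.

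With this in hand, the two candidate maps to consider are $w\mapsto \ProjSE w$ and $v\mapsto g+(\VolSEn+\FredSEn)v$. If $w$ solves~\eqref{eq:SE-Nystroem-symbol}, applying $\ProjSE$ to $w=g+(\VolSEn+\FredSEn)w$ and using the nodal-value identity yields
\[
 \ProjSE w = \ProjSE g + \ProjSE(\VolSEn+\FredSEn)(\ProjSE w),
\]
so $v:=\ProjSE w$ solves~\eqref{eq:SE-collocation-symbol}. Conversely, if $v$ solves~\eqref{eq:SE-collocation-symbol}, setting $w:=g+(\VolSEn+\FredSEn)v$ and applying $\ProjSE$ gives $\ProjSE w=v$; hence $(\VolSEn+\FredSEn)w=(\VolSEn+\FredSEn)v$, and substituting back yields $(I-\VolSEn-\FredSEn)w=g$, so $w$ solves~\eqref{eq:SE-Nystroem-symbol}.

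A short check confirms that these two assignments are mutual inverses: starting from a solution $w$ of~\eqref{eq:SE-Nystroem-symbol}, the round trip returns $g+(\VolSEn+\FredSEn)(\ProjSE w)=g+(\VolSEn+\FredSEn)w=w$; starting from a solution $v$ of~\eqref{eq:SE-collocation-symbol}, the round trip returns $\ProjSE(g+(\VolSEn+\FredSEn)v)=\ProjSE g+\ProjSE(\VolSEn+\FredSEn)v=v$. Thus the solution sets of the two equations are in bijection, which immediately yields the equivalence of unique solvability together with the asserted identities $v=\ProjSE w$ and $w=g+(\VolSEn+\FredSEn)v$.

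I anticipate no real obstacle in this argument; the only point requiring a moment of care is the verification that $\ProjSE$ preserves nodal values, which reduces to $S(j,h)(ih)=\delta_{ij}$ combined with the algebraic cancellation of the boundary terms at $t=\SEt(ih)$ in the definition of $\ProjSE$. Once that is noted, the rest is the standard Atkinson-style bookkeeping between Nystr\"{o}m and collocation formulations, which is presumably why the authors deem the proof too routine to spell out.
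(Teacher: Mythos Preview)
Your proposal is correct and follows essentially the same route as the paper: both hinge on the interpolation identity $\ProjSE[f](\SEt(ih))=f(\SEt(ih))$, which gives $\VolSEn\ProjSE=\VolSEn$ and $\FredSEn\ProjSE=\FredSEn$, and then pass between solutions via $w\mapsto\ProjSE w$ and $v\mapsto g+(\VolSEn+\FredSEn)v$. The only cosmetic difference is that you package the argument as an explicit bijection between solution sets (checking the two round trips), whereas the paper proves existence and uniqueness for (B) separately from (A) and then remarks that the argument is reversible; the content is the same.
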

\begin{proof}
Let us first show (A) $\Rightarrow$ (B).
Note that $\VolSEn\ProjSE f = \VolSEn f$ and
$\FredSEn\ProjSE f = \FredSEn f$ hold,
because of the interpolation property: $\ProjSE[f](\SEt(ih)) = f(\SEt(ih))$.
Applying $\ProjSE$ to both sides of~\eqref{eq:SE-Nystroem-symbol},
we have
\[
 \ProjSE w = \ProjSE( g + \VolSEn w + \FredSEn w)
= \ProjSE (g + \VolSEn\ProjSE w + \FredSEn\ProjSE w),
\]
which implies that~\eqref{eq:SE-collocation-symbol} has a solution
$v = \ProjSE w \in C([a,b])$.
Next, we show the uniqueness. Suppose that~\eqref{eq:SE-collocation-symbol}
has another solution $\tilde{v}\in C([a,b])$,
and define a function $\tilde{w}$ as
$\tilde{w} = g + \VolSEn \tilde{v} + \FredSEn\tilde{v}$.
Because $\tilde{v}$ is a solution of~\eqref{eq:SE-collocation-symbol},
we have
\[
 \tilde{v} = \ProjSE(g + \VolSEn\tilde{v} +\FredSEn\tilde{v})
 = \ProjSE\tilde{w},
\]
from which it holds that
\[
 \tilde{w} = g + \VolSEn \tilde{v} + \FredSEn\tilde{v}
= g + \VolSEn \ProjSE\tilde{v} + \FredSEn\ProjSE\tilde{v}
= g + \VolSEn\tilde{w} + \FredSEn\tilde{w}.
\]
This equation implies that $\tilde{w}$ is a solution
of~\eqref{eq:SE-Nystroem-symbol}.
Because the solution of~\eqref{eq:SE-Nystroem-symbol} is unique,
$w = \tilde{w}$ holds, from which we have
$\ProjSE w = \ProjSE\tilde{w}$.
Thus, $v = \tilde{v}$ holds, which shows (B).

The above argument is reversible, which proves (B) $\Rightarrow$ (A).
Furthermore, in view of the above proof,
we observe that $v = \ProjSE w$ and $w = g + (\VolSEn + \FredSEn)v$.
This completes the proof.
\end{proof}

Summing up the above results,
for the proof of Theorem~\ref{thm:SE-Sinc-collocation},
we have shown the following things:
\begin{enumerate}
 \item Equation~\eqref{eq:VFIE} has a unique solution $u\in C([a,b])$
(Theorem~\ref{thm:u-C}).
 \item Equation~\eqref{eq:SE-Nystroem-symbol} has a unique
solution $w\in C([a,b])$ for all sufficiently large $N$,
and the error is estimated as~\eqref{eq:error-SE-Nystroem}
(Theorem~\ref{thm:error-SE-Nystroem}).
 \item Equation~\eqref{eq:linear-eq-SE-Sinc} has a unique
solution $\mathbd{c}\in \mathbb{R}^n$
(the coefficient matrix $(I_n - V_n^{\textSE} - K_n^{\textSE})$
is invertible)
if and only if equation~\eqref{eq:SE-collocation-symbol}
has a unique solution $v\in C([a,b])$,
and $v = \vSEn$ holds (Proposition~\ref{prop:SE-collocation-system}).
 \item Equation~\eqref{eq:SE-collocation-symbol} has a unique
solution $v\in C([a,b])$ for all sufficiently large $N$,
and $v = \ProjSE w$ holds (Proposition~\ref{prop:SE-Nystroem-collocation}).
\end{enumerate}
Then, we obtain the desired error bound by
estimating~\eqref{eq:final-error-SE},
following the guide in Sect.~\ref{sec:sketch-SE}.
This completes the proof of Theorem~\ref{thm:SE-Sinc-collocation}.

\subsection{Sketch of the proof of Theorem~\ref{thm:DE-Sinc-collocation}}
\label{sec:sketch-DE}

In the case of Theorem~\ref{thm:DE-Sinc-collocation} as well,
we consider the following three equations:
\begin{align}
 (I - \Vol - \Fred) u &= g, \nonumber\\
 (I - \VolDEn - \FredDEn) w &= g, \label{eq:DE-Nystroem-symbol}\\
 (I - \ProjDE\VolDEn - \ProjDE\FredDEn) v &= \ProjDE g.
 \label{eq:DE-collocation-symbol}
\end{align}
By the analysis in Sect.~\ref{sec:regularity},
we already know that the first equation has
a unique solution $u\in\MC_{\alpha}(\DEt(\domD_d))$.
In Sect.~\ref{sec:DE-Nystroem},
we analyze the second equation,
and show that~\eqref{eq:DE-Nystroem-symbol} has
a unique solution $w\in C([a,b])$, and there exists a
constant $C$ independent of $N$ such that
\begin{equation}
 \|u - w\|_{C([a,b])}\leq C \frac{\log(2 d N/\alpha)}{N}
 \E^{-\pi d N/\log(2 d N/\alpha)}.
\label{eq:error-DE-Nystroem}
\end{equation}
In Sect.~\ref{sec:DE-collocation},
we analyze the third equation,
and show that~\eqref{eq:DE-collocation-symbol} has
a unique solution $v \in C([a,b])$,
which means that the system~\eqref{eq:linear-eq-DE-Sinc} is
uniquely solvable, i.e.,
the coefficient matrix $(I_n - V_n^{\textDE} - K_n^{\textDE})$
is invertible.
Furthermore, we show that $v = \vDEn = \ProjDE w$ holds.
Thus, we have
\begin{align}
 \|u - \vDEn\|_{C([a,b])}
\leq \|u - \ProjDE u\|_{C([a,b])}
 + \|\ProjDE u - \ProjDE w\|_{C([a,b])}
\leq \|u - \ProjDE u\|_{C([a,b])}
 + \|\ProjDE\|_{\mathcal{L}(C([a,b]),C([a,b]))} \| u - w\|_{C([a,b])}.
\label{eq:final-error-DE}
\end{align}
Because $u\in\MC_{\alpha}(\DEt(\domD_d))$,
we can use Theorem~\ref{thm:DE-Sinc-general} for the first term.
For the second term, we can use~\eqref{eq:error-DE-Nystroem}
and the following lemma.

\begin{lemma}[Okayama~{\cite[Lemma~7.5]{OkayamaFredholm}}]
Let $\ProjDE$ be defined by~\eqref{eq:ProjDE}.
Then, there exists a constant $C$ independent of $N$ such that
\[
 \|\ProjDE\|_{\mathcal{L}(C([a,b]),C([a,b]))}
\leq C\log(N+1).
\]
\end{lemma}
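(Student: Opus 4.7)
The plan is to bound $|\ProjDE[f](t)|$ pointwise by exploiting the fact that, on $[a,b]$, the auxiliary basis functions $\omega_a$ and $\omega_b$ are uniformly controlled, so that the only nontrivial contribution to the operator norm comes from the Sinc interpolation part. Concretely, I would fix $t\in[a,b]$ and $f\in C([a,b])$ with $\|f\|_{C([a,b])}\leq 1$, and expand the definition~\eqref{eq:ProjDE} of $\ProjDE[f](t)$ together with the triangle inequality.

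First, I would observe that by construction $\omega_a$ and $\omega_b$ are the linear polynomials that interpolate the values $1$ and $0$ at the endpoints, so $0\leq\omega_a(t),\omega_b(t)\leq 1$ for all $t\in[a,b]$; in particular, the same bound holds at the nodes $\DEt(jh)\in(a,b)$. Substituting these bounds together with $|f(\DEt(jh))|\leq 1$ into the expanded expression yields
\[
 |\ProjDE[f](t)|
 \leq 2 + 3\sum_{j=-N}^N |S(j,h)(\DEtInv(t))|.
\]

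Next, I would appeal to the classical logarithmic bound on the Lebesgue constant for Sinc interpolation on $\mathbb{R}$, namely
\[
 \sup_{x\in\mathbb{R}}\sum_{j=-N}^N |S(j,h)(x)| \leq C_0\log(N+1),
\]
which is a standard result from Stenger's monograph~\cite{stenger93:_numer}. Since $\DEtInv(t)\in\mathbb{R}$ whenever $t\in(a,b)$, taking the supremum of the previous display over $t\in[a,b]$ and combining with this Lebesgue-constant estimate gives $\|\ProjDE[f]\|_{C([a,b])} \leq 2 + 3C_0\log(N+1)$, which after absorbing constants and using $N\geq 1$ yields the stated bound $C\log(N+1)$.

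The argument is essentially routine once the correct decomposition has been made; the only nontrivial ingredient is the logarithmic Lebesgue-constant estimate for Sinc interpolation, which I would take as given from the Sinc-approximation literature. The main subtlety, if any, is verifying that the coefficient $\{f(\DEt(jh)) - f(\DEt(-Nh))\omega_a(\DEt(jh)) - f(\DEt(Nh))\omega_b(\DEt(jh))\}$ multiplying $S(j,h)(\DEtInv(t))$ in~\eqref{eq:ProjDE} remains uniformly bounded by $3\|f\|_{C([a,b])}$, but this is immediate from the triangle inequality and the pointwise bound on $\omega_a,\omega_b$, so no growth in $N$ enters through this route. Note that the proof for the SE case (the analogous Lemma cited from~\cite{OkayamaFredholm}) proceeds in exactly the same way, with $\DEt$ replaced by $\SEt$; indeed, neither the particular choice of transformation nor the selection formula for $h$ enters the argument, which only uses that the nodes lie in $(a,b)$ and that the Sinc Lebesgue constant is $O(\log N)$.
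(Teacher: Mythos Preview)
Your argument is correct and is precisely the standard route: bound the two boundary terms by $1$ each, bound each Sinc coefficient by $3\|f\|_{C([a,b])}$ via the triangle inequality and $0\leq\omega_a,\omega_b\leq 1$, and then invoke the $O(\log N)$ Lebesgue-constant estimate for truncated Sinc interpolation from Stenger~\cite{stenger93:_numer}. The present paper does not prove this lemma at all---it is quoted verbatim from~\cite[Lemma~7.5]{OkayamaFredholm}---so there is no in-paper proof to compare against; your proof is essentially what appears in the cited source (and is identical in structure to the SE version, as you correctly note).
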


Thus, we obtain the desired error bound
of Theorem~\ref{thm:DE-Sinc-collocation}.

\subsection{On the second equation~\eqref{eq:DE-Nystroem-symbol}}
\label{sec:DE-Nystroem}

Here, we show that~\eqref{eq:DE-Nystroem-symbol} has
a unique solution $w\in C([a,b])$, and estimate the term
$\|u - w\|_{C([a,b])}$.
For this purpose,
we show that the four conditions of Theorem~\ref{thm:Atkinson}
are fulfilled with $\mathcal{X} = \Vol + \Fred$
and $\mathcal{X}_n = \VolDEn + \FredDEn$,
under the assumptions of Theorem~\ref{thm:DE-Sinc-collocation}.
In this case as well,
conditions 1 through 3 can be shown easily.
For condition~4, in the following lemma, we show
\begin{equation}
 \|\{(\Vol + \Fred) - (\VolDEn + \FredDEn)\}(\VolDEn + \FredDEn)\|_{\mathcal{L}(C([a,b]),C([a,b]))} \to 0
\quad (N\to\infty),
\label{eq:operator-norm-converge-DE}
\end{equation}
which implies that
there exists a positive integer $N_0$
such that~\eqref{eq:operator-converge-condition} holds for all
$N\geq N_0$.

\begin{lemma}
\label{lem:operator-norm-converge-DE}
Assume that $k_1(z,\cdot)$ and $k_2(z,\cdot)$
belong to $\HC(\DEt(\domD_d))$
uniformly for $z\in\overline{\DEt(\domD_d)}$,
and $k_1(\cdot,w)$ and $k_2(\cdot,w)$
belong to $\HC(\DEt(\domD_d))$
uniformly for $w\in\overline{\DEt(\domD_d)}$.
Let $h$ be selected by the formula~\eqref{eq:h-DE}.
Then,~\eqref{eq:operator-norm-converge-DE} holds.
\end{lemma}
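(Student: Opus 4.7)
The plan is to follow the same four-term decomposition used in the proof of Lemma~\ref{lem:operator-norm-converge}, replacing the SE transformation and its approximation theory by the DE counterparts (Theorem~\ref{thm:DE-Sinc-indefinite} and Corollary~\ref{cor:DE-Sinc-quadrature}). Concretely, I would first split the norm by the triangle inequality,
\begin{align*}
&\|\{(\Vol+\Fred)-(\VolDEn+\FredDEn)\}(\VolDEn+\FredDEn)\|_{\mathcal{L}(C([a,b]),C([a,b]))}\\
&\leq \|(\Vol-\VolDEn)\VolDEn\|_{\mathcal{L}(C([a,b]),C([a,b]))}+\|(\Vol-\VolDEn)\FredDEn\|_{\mathcal{L}(C([a,b]),C([a,b]))}\\
&\quad+\|(\Fred-\FredDEn)\VolDEn\|_{\mathcal{L}(C([a,b]),C([a,b]))}+\|(\Fred-\FredDEn)\FredDEn\|_{\mathcal{L}(C([a,b]),C([a,b]))},
\end{align*}
and treat the four pieces in turn.

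For the purely Fredholm term $\|(\Fred-\FredDEn)\FredDEn\|$, Corollary~\ref{cor:DE-Sinc-quadrature} applied to $k_2(t,s)k_2(s,\DEt(ih))$, together with the uniformly bounded Riemann sum $h\sum_{i=-N}^N\DEtDiv(ih)\to b-a$, yields convergence at the rate $\E^{-2\pi d N/\log(2dN/\alpha)}$. For the mixed second term $(\Vol-\VolDEn)\FredDEn$, setting $F_i(t,s)=k_1(t,s)k_2(s,\DEt(ih))$, I would write
\[
(\Vol-\VolDEn)\FredDEn[f](t)=h\sum_{i=-N}^{N}f(\DEt(ih))\DEtDiv(ih)\left\{\int_a^t F_i(t,s)\D s-\sum_{j=-N}^{N}F_i(t,\DEt(jh))\DEtDiv(jh)J(j,h)(\DEtInv(t))\right\}
\]
and apply Theorem~\ref{thm:DE-Sinc-indefinite} uniformly in $t\in[a,b]$ and $i$ (the hypotheses are met since $F_i(t,\cdot)$ lies in $\HC(\DEt(\domD_d))$ uniformly by assumption), yielding a bound of order $\frac{\log(2dN/\alpha)}{N}\E^{-\pi d N/\log(2dN/\alpha)}$ after multiplying by the bounded factor $h\sum_i\DEtDiv(ih)$.

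The main obstacle will be the two terms that feature $\VolDEn$ on the inside, namely $(\Vol-\VolDEn)\VolDEn$ and $(\Fred-\FredDEn)\VolDEn$, because their composite kernels contain $J(j,h)(\DEtInv(\cdot))$, which by~\eqref{eq:bound-J} grows exponentially on the strip like $(h/d)\sinh(\pi d/h)$. Setting $G_j(t,s)=k_2(t,s)k_1(s,\DEt(jh))J(j,h)(\DEtInv(s))$ in the $(\Fred-\FredDEn)\VolDEn$ case (and treating $(\Vol-\VolDEn)\VolDEn$ analogously via Theorem~\ref{thm:DE-Sinc-indefinite} in place of Corollary~\ref{cor:DE-Sinc-quadrature}), the constant $L$ in~\eqref{eq:LCminus1} inherits the factor $\sinh(\pi d/h)$ while the DE-quadrature error carries $\E^{-2\pi d N/\log(2dN/\alpha)}=\E^{-2\pi d/h}$. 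Invoking the elementary inequality $\sinh(\pi d/h)\E^{-\pi d/h}\leq 1/2$, the product reduces to something of order $h^{2}\E^{-\pi d/h}=h^{2}\E^{-\pi d N/\log(2dN/\alpha)}$, which vanishes as $N\to\infty$. The delicate point here, in contrast to the SE case where $\E^{-\pi d/h}$ already exactly matches the quadrature rate, is that the DE transformation's extra exponential decay is precisely what is needed to still beat the single-exponential blow-up of $J$ on the strip; I would need to verify that the resulting bound is uniform in $i,j$ and $t$, and combine it with the uniform bound $h\sum_{j=-N}^N\DEtDiv(jh)\leq C$. Summing the four estimates then gives~\eqref{eq:operator-norm-converge-DE}.
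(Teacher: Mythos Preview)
Your proposal is correct and follows essentially the same route as the paper's proof: the same four-term split, the same kernels $F_i$ and $G_j$, the same use of Theorem~\ref{thm:DE-Sinc-indefinite} and Corollary~\ref{cor:DE-Sinc-quadrature}, and the same cancellation $\sinh(\pi d/h)\E^{-2\pi d/h}\leq \tfrac{1}{2}\E^{-\pi d/h}$ to absorb the blow-up~\eqref{eq:bound-J} of $J$ on the strip. The only cosmetic difference is that the paper disposes of the two ``pure'' terms $\|(\Vol-\VolDEn)\VolDEn\|$ and $\|(\Fred-\FredDEn)\FredDEn\|$ by citing existing results rather than redoing the estimates, whereas you sketch them directly.
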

\begin{proof}
Expanding and arranging the terms, we have
\begin{align}
& \|\{(\Vol + \Fred) - (\VolDEn + \FredDEn)\}(\VolDEn + \FredDEn)\|_{\mathcal{L}(C([a,b]),C([a,b]))}\nonumber\\
&\leq \|(\Vol - \VolDEn)\VolDEn\|_{\mathcal{L}(C([a,b]),C([a,b]))}
+ \|(\Vol - \VolDEn)\FredDEn\|_{\mathcal{L}(C([a,b]),C([a,b]))}\nonumber\\
&\quad +\|(\Fred - \FredDEn)\VolDEn\|_{\mathcal{L}(C([a,b]),C([a,b]))}
+\|(\Fred - \FredDEn)\FredDEn\|_{\mathcal{L}(C([a,b]),C([a,b]))}.
\label{eq:arrange-term-DE}
\end{align}
In an existing study~\cite[Lemma~6.9]{okayama15:_theor},
$\|(\Vol - \VolDEn)\VolDEn\|_{\mathcal{L}(C([a,b]),C([a,b]))}\to 0$
is already proved,
and in another existing study~\cite{okayama11:_improv},
$\|(\Fred - \FredDEn)\FredDEn\|_{\mathcal{L}(C([a,b]),C([a,b]))}\to 0$
is also proved.
The remaining terms to be estimated in~\eqref{eq:arrange-term-DE}
are the second and third terms.
For the second term, setting $F_i(t,s)=k_1(t,s)k_2(s,\DEt(ih))$,
we have
\begin{align*}
 (\Vol - \VolDEn)\FredDEn[f](t)
&= h \sum_{i=-N}^N f(\DEt(ih))\DEtDiv(ih)
\left\{
\int_a^t F_i(t,s)\D{s}
 - \sum_{j=-N}^N F_i(t,\DEt(jh))\DEtDiv(jh)J(j,h)(\DEtInv(t))
\right\}.
\end{align*}
Because $F_i(t,\cdot)$ satisfies the assumptions of
Theorem~\ref{thm:DE-Sinc-indefinite}
uniformly for $t\in [a,b]$ and $i=-N,\,\ldots,\,N$, we have
\begin{align*}
\left|(\Vol - \VolDEn)\FredDEn[f](t)\right|
&\leq h \sum_{i=-N}^N |f(\DEt(ih))| \DEtDiv(ih)
 \left|
\int_a^t F_i(t,s)\D{s}
 - \sum_{j=-N}^N F_i(t,\DEt(jh))\DEtDiv(jh)J(j,h)(\DEtInv(t))
\right|\\
&\leq \|f\|_{C([a,b])} \left(h\sum_{i=-N}^N\DEtDiv(ih)\right)
C_0 \frac{\log(2 d N/\alpha)}{N}\E^{-\pi d N/\log(2 d N/\alpha)}
\end{align*}
for some constant $C_0$. Furthermore,
because $h\sum_{i=-N}^N\DEtDiv(ih)$ converges to $(b - a)$ as $N\to \infty$,
we have
\[
 \|(\Vol - \VolDEn)\FredDEn\|_{\mathcal{L}(C([a,b]),C([a,b]))}
\leq C_0 C_1 \frac{\log(2 d N/\alpha)}{N}\E^{-\pi d N/\log(2 d N/\alpha)}
\]
for some constant $C_1$.
This proves that the second term in~\eqref{eq:arrange-term-DE}
tends to $0$ as $N\to\infty$.
Finally, we investigate the third term in~\eqref{eq:arrange-term-DE}.
Setting $G_j(t,s)=k_2(t,s)k_1(s,\DEt(jh))J(j,h)(\DEtInv(s))$,
we have
\[
 (\Fred - \FredDEn)\VolDEn[f](t)
= \sum_{j=-N}^N f(\DEt(jh))\DEtDiv(jh)
\left\{\int_a^b G_j(t,s)\D{s} - h \sum_{i=-N}^N G_j(t,\DEt(ih))\DEtDiv(ih)
\right\}.
\]
We use Corollary~\ref{cor:DE-Sinc-quadrature}
focusing on $G_j(t,\cdot)$.
Taking~\eqref{eq:bound-J} into account,
setting $K_1 = \max_{z,w\in\overline{\DEt(\domD_d)}}|k_1(z,w)|$
and $K_2 = \max_{z,w\in\overline{\DEt(\domD_d)}}|k_2(z,w)|$,
Corollary~\ref{cor:DE-Sinc-quadrature} yields
\begin{align*}
 \left|
\int_a^b G_j(t,s)\D{s} - h \sum_{i=-N}^N G_j(t,\DEt(ih))\DEtDiv(ih)
\right|
&\leq K_2 K_1 \frac{5 h}{\pi}\cdot\frac{\sinh(\pi d/h)}{\pi d/h}
(b - a)^{2\alpha-1} C_{\alpha,d}^{\textDE}
 \E^{-2\pi d N/\log(2 d N/\alpha)}\\
&= \frac{5 K_1 K_2 (b - a)^{2\alpha-1} C_{\alpha,d}^{\textDE}}{\pi^2 d}
 h^2 \left[\sinh(\pi d/h)\E^{-2\pi d/h}\right].
\end{align*}
Furthermore, using $\sinh(\pi d/h)\E^{-2\pi d/h}\leq \E^{-\pi d /h}/2$
and $h\sum_{j=-N}^N\DEtDiv(jh)\leq C_1$ as previously estimated,
we have
\begin{align*}
 \|(\Fred - \FredDEn)\VolDEn\|_{\mathcal{L}(C([a,b]),C([a,b]))}
\leq \frac{5 K_1 K_2 (b - a)^{2\alpha-1} C_{\alpha,d}^{\textDE} C_1}{2\pi^2 d}
 h \E^{-\pi d/h},
\end{align*}
which proves that the third term in~\eqref{eq:arrange-term-DE}
tends to $0$ as $N\to\infty$.
This completes the proof.
\end{proof}

Thus, all conditions 1 through 4 in Theorem~\ref{thm:Atkinson}
are fulfilled, and the next theorem is established.

\begin{theorem}
\label{thm:error-DE-Nystroem}
Assume that all the assumptions of Lemma~\ref{lem:operator-norm-converge-DE}
are fulfilled.
Then, there exists a positive integer $N_0$ such that
for all $N\geq N_0$,~\eqref{eq:DE-Nystroem-symbol} has a unique
solution $w\in C([a,b])$.
Furthermore, there exists a constant $C$ independent of $N$ such that
for all $N\geq N_0$, the inequality~\eqref{eq:error-DE-Nystroem} holds.
\end{theorem}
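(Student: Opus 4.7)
The plan is to mirror the proof of Theorem~\ref{thm:error-SE-Nystroem} almost verbatim, replacing every SE object with its DE counterpart and tracking the resulting convergence rates from Theorem~\ref{thm:DE-Sinc-indefinite} and Corollary~\ref{cor:DE-Sinc-quadrature}. First, I would apply Theorem~\ref{thm:Atkinson} with $\mathcal{X} = \Vol + \Fred$ and $\mathcal{X}_n = \VolDEn + \FredDEn$. Condition~1 is trivial; condition~2 follows from Theorem~\ref{thm:u-C}; condition~3 (compactness of $\mathcal{X}_n$) follows from finite-rankness together with Arzelà--Ascoli; and condition~4 is exactly Lemma~\ref{lem:operator-norm-converge-DE}, which yields an $N_0$ such that for $N\geq N_0$ the operator $I - \VolDEn - \FredDEn$ is invertible on $C([a,b])$, so~\eqref{eq:DE-Nystroem-symbol} has a unique solution $w\in C([a,b])$ and
\[
 \|u - w\|_{C([a,b])}
\leq \|(I - \VolDEn - \FredDEn)^{-1}\|_{\mathcal{L}(C([a,b]),C([a,b]))}
\,\|(\Vol + \Fred)u - (\VolDEn + \FredDEn)u\|_{C([a,b])}.
\]

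The next step is to show that $\|(I - \VolDEn - \FredDEn)^{-1}\|$ is bounded uniformly in $N$. By~\eqref{InEq:Bound-Inverse-Op}, it suffices to bound $\|\VolDEn + \FredDEn\|$ uniformly. Using the uniform estimate~\eqref{eq:bound-J-real} for $J(j,h)$ on the real line together with the fact that $h\sum_{j=-N}^N \DEtDiv(jh)$ converges to $b-a$ as $N\to\infty$, I would obtain, in exact analogy with the SE proof, constants $\tilde{K}_1 = \max_{t,s\in[a,b]}|k_1(t,s)|$ and $\tilde{K}_2 = \max_{t,s\in[a,b]}|k_2(t,s)|$ such that $\|\VolDEn + \FredDEn\|_{\mathcal{L}(C([a,b]),C([a,b]))} \leq C_1(1.1\tilde{K}_1 + \tilde{K}_2)$, giving uniform boundedness of the inverse by some constant $C_2$.

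For the second factor, $u\in\MC_\alpha(\DEt(\domD_d))$ is bounded on the bounded closure $\overline{\DEt(\domD_d)}$, and both $k_1(t,\cdot)$ and $k_2(t,\cdot)$ lie in $\HC(\DEt(\domD_d))$ uniformly in $t$. Since for $\alpha \leq 1$ the weight $|z-a|^{\alpha-1}|b-z|^{\alpha-1}$ is bounded below by a positive constant on any bounded domain, the products $k_1(t,\cdot)u(\cdot)$ and $k_2(t,\cdot)u(\cdot)$ satisfy~\eqref{eq:LCminus1} uniformly in $t\in[a,b]$. Therefore Theorem~\ref{thm:DE-Sinc-indefinite} gives $\|\Vol u - \VolDEn u\|_{C([a,b])} \leq C_3\,\frac{\log(2dN/\alpha)}{N}\E^{-\pi d N/\log(2dN/\alpha)}$, while Corollary~\ref{cor:DE-Sinc-quadrature} gives $\|\Fred u - \FredDEn u\|_{C([a,b])} \leq C_4\E^{-2\pi d N/\log(2dN/\alpha)}$, which is asymptotically dominated by the Volterra term. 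Combining the two estimates with the uniform bound $C_2$ delivers~\eqref{eq:error-DE-Nystroem}. The only subtlety is the verification of~\eqref{eq:LCminus1} for the products, which as noted is immediate from boundedness; all other steps are mechanical transfers of the SE argument, so I expect no serious obstacle.
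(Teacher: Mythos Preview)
Your proposal is correct and follows essentially the same approach as the paper: apply Theorem~\ref{thm:Atkinson} with $\mathcal{X}=\Vol+\Fred$ and $\mathcal{X}_n=\VolDEn+\FredDEn$, bound $\|\VolDEn+\FredDEn\|$ uniformly via~\eqref{eq:bound-J-real} and the boundedness of $h\sum_j\DEtDiv(jh)$, and then estimate $\|(\Vol+\Fred)u-(\VolDEn+\FredDEn)u\|$ using Theorem~\ref{thm:DE-Sinc-indefinite} and Corollary~\ref{cor:DE-Sinc-quadrature}. Your explicit justification that boundedness of $k_i(t,\cdot)u(\cdot)$ implies~\eqref{eq:LCminus1} (via the lower bound on $|z-a|^{\alpha-1}|b-z|^{\alpha-1}$ over a bounded domain) is a detail the paper leaves implicit.
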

\begin{proof}
From Theorem~\ref{thm:Atkinson}, we have
\[
 \|u - w\|_{C([a,b])}
\leq \|(I - \VolDEn - \FredDEn)^{-1}\|_{\mathcal{L}(C([a,b]),C([a,b]))}
\|(\Vol + \Fred)u - (\VolDEn + \FredDEn)u\|_{C([a,b])}.
\]
If we show that $\|\VolDEn + \FredDEn\|_{\mathcal{L}(C([a,b]),C([a,b]))}$
is uniformly bounded,
then we obtain the
uniform boundedness of
$\|(I - \VolDEn - \FredDEn)^{-1}\|_{\mathcal{L}(C([a,b]),C([a,b]))}$
(see~\eqref{InEq:Bound-Inverse-Op}).
Using the bound~\eqref{eq:bound-J-real} and
setting $\tilde{K}_1=\max_{t,s\in[a,b]}|k_1(t,s)|$, we have
\[
 |\VolDEn[f](t)| \leq
1.1\tilde{K}_1 \|f\|_{C([a,b])} h\sum_{j=-N}^N\DEtDiv(jh)
\leq 1.1 \tilde{K}_1 \|f\|_{C([a,b])} C_1
\]
for some constant $C_1$. Similarly,
setting $\tilde{K}_2=\max_{t,s\in[a,b]}|k_2(t,s)|$, we have
\[
 |\FredDEn[f](t)| \leq
\tilde{K}_2 \|f\|_{C([a,b])} h\sum_{j=-N}^N\DEtDiv(jh)
\leq \tilde{K}_2 \|f\|_{C([a,b])} C_1
\]
for some constant $C_1$.
Therefore, it holds that
\[
 \|\VolDEn + \FredDEn\|_{\mathcal{L}(C([a,b]),C([a,b]))}
\leq C_1\left(1.1\tilde{K}_1 + \tilde{K}_2\right),
\]
which is uniformly bounded. Thus, there exists a constant $C_2$
independent of $N$ such that
\[
 \|(I - \VolDEn - \FredDEn)^{-1}\|_{\mathcal{L}(C([a,b]),C([a,b]))}\leq C_2.
\]
Furthermore, because $k_1(t,\cdot)u(\cdot)$ satisfies
the assumptions of Theorem~\ref{thm:DE-Sinc-indefinite}
and $k_2(t,\cdot)u(\cdot)$ satisfies
the assumptions of Corollary~\ref{cor:DE-Sinc-quadrature},
there exist constants $C_3$ and $C_4$ independent of $N$ such that
\begin{align*}
 \|(\Vol + \Fred)u - (\VolDEn + \FredDEn)u\|_{C([a,b])}
&\leq \|\Vol u - \VolDEn u \|_{C([a,b])}
+ \|\Fred u - \FredDEn u \|_{C([a,b])}\\
&\leq C_3\frac{\log(2 d N/\alpha)}{N}\E^{-\pi d N/\log(2 d N/\alpha)}
+ C_4\E^{-2 \pi d N/\log(2 d N/\alpha)}.
\end{align*}
This completes the proof.
\end{proof}

\subsection{On the third equation~\eqref{eq:DE-collocation-symbol}}
\label{sec:DE-collocation}

Following Atkinson~\cite[Sect.~4.3]{atkinson97:_numer_solut},
we easily see that $v = \vDEn$ as stated below.
The proof is omitted because its way is fairly standard.

\begin{proposition}
\label{prop:DE-collocation-system}
If~\eqref{eq:DE-collocation-symbol} has a unique solution $v\in C([a,b])$,
then~\eqref{eq:linear-eq-DE-Sinc} is uniquely solvable,
and vice versa. Furthermore, $v = \vDEn$ holds.
\end{proposition}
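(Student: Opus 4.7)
The plan is to mirror the argument for Proposition~\ref{prop:SE-collocation-system} word for word, replacing $\SEt$ by $\DEt$ and $\ProjSE$ by $\ProjDE$ throughout. The underlying structural fact is that the range of $\ProjDE$ is exactly the $(2N+1)$-dimensional space spanned by $\omega_a$, $\omega_b$ and the composed Sinc basis $S(j,h)\circ\DEtInv$, and the representation~\eqref{eq:vDEn} is precisely the unique element of that space whose values at the collocation nodes $\DEt(ih)$ are $c_i$. In particular, one first checks by direct substitution into~\eqref{eq:ProjDE} that $\ProjDE \vDEn = \vDEn$, using $\vDEn(\DEt(ih)) = c_i$.

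For the direction ``system $\Rightarrow$ collocation equation'', I would take $\mathbd{c}_n$ solving~\eqref{eq:linear-eq-DE-Sinc}, define $\vDEn$ by~\eqref{eq:vDEn}, and note that $\VolDEn[\vDEn](\DEt(ih))$ and $\FredDEn[\vDEn](\DEt(ih))$ depend on $\vDEn$ only through its nodal values $\vDEn(\DEt(jh))=c_j$; thus the system is literally the statement
\[
\vDEn(\DEt(ih)) - \VolDEn[\vDEn](\DEt(ih)) - \FredDEn[\vDEn](\DEt(ih)) = g(\DEt(ih)),\quad i=-N,\ldots,N.
\]
Applying $\ProjDE$ to the residual $\vDEn - \VolDEn[\vDEn] - \FredDEn[\vDEn] - g$, which then vanishes at every collocation node, and using both the interpolation property of $\ProjDE$ and $\ProjDE\vDEn = \vDEn$, I obtain~\eqref{eq:DE-collocation-symbol} with $v=\vDEn$.

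Conversely, if $v\in C([a,b])$ solves~\eqref{eq:DE-collocation-symbol}, I evaluate at $t=\DEt(ih)$ and use $\ProjDE[f](\DEt(ih))=f(\DEt(ih))$ to reduce the equation to the nodal identities above. Setting $c_i := v(\DEt(ih))$ yields~\eqref{eq:linear-eq-DE-Sinc}. Uniqueness of solutions transfers in both directions by the same correspondence: two distinct solutions of the system would, via~\eqref{eq:vDEn}, produce two distinct solutions of~\eqref{eq:DE-collocation-symbol}, and conversely any two solutions $v,\tilde v$ of~\eqref{eq:DE-collocation-symbol} share their nodal values whenever the system has a unique solution, forcing $v=\ProjDE v = \ProjDE \tilde v = \tilde v$ since both lie in the range of $\ProjDE$.

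There is no real obstacle here; the only step that needs any bookkeeping is verifying $\ProjDE\vDEn=\vDEn$ (and analogously that every $v$ solving~\eqref{eq:DE-collocation-symbol} automatically lies in the range of $\ProjDE$, since it equals $\ProjDE(g+\VolDEn v + \FredDEn v)$). This is exactly why the paper declares the proof to be ``fairly standard'' and omits it, pointing the reader to~\cite[Sect.~4.3]{atkinson97:_numer_solut}.
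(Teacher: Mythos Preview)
Your argument is correct and is precisely the standard collocation--Nystr\"om correspondence the paper has in mind; the paper itself omits the proof entirely, deferring to Atkinson~\cite[Sect.~4.3]{atkinson97:_numer_solut}, so your write-up in fact supplies more detail than the original.
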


We also see $v = \ProjDE w$ as shown below.
The proof is omitted because it proceeds in exactly the same way
as Proposition~\ref{prop:SE-Nystroem-collocation}.

\begin{proposition}
\label{prop:DE-Nystroem-collocation}
The following two statements are equivalent:
\begin{enumerate}
 \item[{\rm(A)}] Equation~\eqref{eq:DE-Nystroem-symbol}
has a unique solution $w\in C([a,b])$.
 \item[{\rm(B)}] Equation~\eqref{eq:DE-collocation-symbol}
has a unique solution $v\in C([a,b])$.
\end{enumerate}
Furthermore, $v = \ProjDE w$ and $w = g + (\VolDEn+\FredDEn)v$ hold
if the solutions exist.
\end{proposition}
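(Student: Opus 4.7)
The plan is to mirror the proof of Proposition~\ref{prop:SE-Nystroem-collocation} verbatim, with all SE-objects replaced by their DE-counterparts ($\ProjSE \mapsto \ProjDE$, $\VolSEn \mapsto \VolDEn$, $\FredSEn \mapsto \FredDEn$, $\SEt \mapsto \DEt$). The crucial algebraic fact that drives the whole equivalence is the interpolation property $\ProjDE[f](\DEt(ih)) = f(\DEt(ih))$, which combined with the definitions of $\VolDEn$ and $\FredDEn$ (both of which evaluate their argument only at the nodes $\DEt(jh)$) yields the two identities
\[
 \VolDEn \ProjDE f = \VolDEn f, \qquad \FredDEn \ProjDE f = \FredDEn f,
\]
valid for every $f\in C([a,b])$.

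For the implication (A) $\Rightarrow$ (B), I would apply $\ProjDE$ to both sides of~\eqref{eq:DE-Nystroem-symbol}, replace $\VolDEn w$ and $\FredDEn w$ by $\VolDEn \ProjDE w$ and $\FredDEn \ProjDE w$ using the identities above, and thereby exhibit $v := \ProjDE w$ as a solution of~\eqref{eq:DE-collocation-symbol}. For uniqueness, I would suppose $\tilde v \in C([a,b])$ is another solution of~\eqref{eq:DE-collocation-symbol}, set $\tilde w := g + \VolDEn \tilde v + \FredDEn \tilde v$, observe that the collocation equation forces $\tilde v = \ProjDE \tilde w$, and then substitute back to obtain $\tilde w = g + (\VolDEn + \FredDEn)\tilde w$, so that $\tilde w$ solves~\eqref{eq:DE-Nystroem-symbol}. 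Uniqueness of the Nystr\"om solution then gives $\tilde w = w$, hence $\tilde v = \ProjDE \tilde w = \ProjDE w = v$.

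For (B) $\Rightarrow$ (A), the same construction runs in reverse: given $v$ solving~\eqref{eq:DE-collocation-symbol}, define $w := g + (\VolDEn + \FredDEn) v$; then $\ProjDE w = v$ by the collocation equation, and feeding this back through the identities yields $w = g + (\VolDEn + \FredDEn) w$. Uniqueness of $w$ follows from the transfer $v = \ProjDE w$, which sends any second Nystr\"om solution to a second collocation solution. The identities $v = \ProjDE w$ and $w = g + (\VolDEn + \FredDEn)v$ are produced as by-products of both directions of the argument.

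I do not expect a real obstacle here; the argument is entirely formal once the interpolation identity is in place, which is why the author omits it. The only point that requires mild attention is the book-keeping: one must apply the two identities in the correct direction at each step, since $\ProjDE \VolDEn \ne \VolDEn$ and $\ProjDE \FredDEn \ne \FredDEn$ in general (they fail on functions not interpolated exactly), whereas $\VolDEn \ProjDE = \VolDEn$ and $\FredDEn \ProjDE = \FredDEn$ hold unconditionally, and it is only the latter two that are used throughout.
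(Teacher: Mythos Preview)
Your proposal is correct and follows exactly the approach the paper intends: the author explicitly omits the proof because it ``goes in exactly the same way as Proposition~\ref{prop:SE-Nystroem-collocation},'' and your write-up is precisely that transcription, driven by the interpolation identity $\ProjDE[f](\DEt(ih)) = f(\DEt(ih))$ and its consequences $\VolDEn\ProjDE = \VolDEn$, $\FredDEn\ProjDE = \FredDEn$. Your closing remark about the asymmetry between $\VolDEn\ProjDE$ and $\ProjDE\VolDEn$ is well observed and shows you have the mechanics straight.
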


Summing up the above results,
for the proof of Theorem~\ref{thm:DE-Sinc-collocation},
we have shown the following things:
\begin{enumerate}
 \item Equation~\eqref{eq:VFIE} has a unique solution $u\in C([a,b])$
(Theorem~\ref{thm:u-C}).
 \item Equation~\eqref{eq:DE-Nystroem-symbol} has a unique
solution $w\in C([a,b])$ for all sufficiently large $N$,
and the error is estimated as~\eqref{eq:error-DE-Nystroem}
(Theorem~\ref{thm:error-DE-Nystroem}).
 \item Equation~\eqref{eq:linear-eq-DE-Sinc} has a unique
solution $\mathbd{c}\in \mathbb{R}^n$
(the coefficient matrix $(I_n - V_n^{\textDE} - K_n^{\textDE})$
is invertible)
if and only if equation~\eqref{eq:DE-collocation-symbol}
has a unique solution $v\in C([a,b])$,
and $v = \vDEn$ holds (Proposition~\ref{prop:DE-collocation-system}).
 \item Equation~\eqref{eq:DE-collocation-symbol} has a unique
solution $v\in C([a,b])$ for all sufficiently large $N$,
and $v = \ProjDE w$ holds (Proposition~\ref{prop:DE-Nystroem-collocation}).
\end{enumerate}
Then, we obtain the desired error bound by
estimating~\eqref{eq:final-error-DE},
following the guide in Sect.~\ref{sec:sketch-DE}.
This completes the proof of Theorem~\ref{thm:DE-Sinc-collocation}.


\bibliography{VolterraFredholm}

\end{document}